\documentclass[10pt, oneside]{amsart}   	
\usepackage{geometry}                		
\geometry{letterpaper}                   		
\usepackage{graphicx}				
\usepackage{amssymb}
\usepackage{amsmath}
\usepackage{amsfonts}
\usepackage{amsthm}
\usepackage{color}
\usepackage{amscd}
\usepackage[usenames,dvipsnames,svgnames,table]{xcolor}
\usepackage[utf8]{inputenc}
\usepackage[parfill]{parskip}
\setcounter{tocdepth}{1}
\usepackage[colorlinks=true, pdfstartview=FitV,linkcolor=ForestGreen,citecolor=ForestGreen, urlcolor=black]{hyperref}
\usepackage{enumerate}
\usepackage{esint}
\usepackage{cite}
\usepackage{tikz}
\usepackage{url} 

\newcommand{\dd}{\, {\rm d}}

\newcommand{\abs}[1]{\left\vert#1\right\vert}
\newcommand{\Abs}[1]{\left\vert#1\right\vert}
\newcommand{\norm}[1]{\left\Vert#1\right\Vert}  

\newcommand{\R}{\ensuremath{{\mathbb R}}}
\newcommand{\N}{\ensuremath{{\mathbb N}}}

\newcommand{\beq}{\begin{equation}}
\newcommand{\eeq}{\end{equation}}
\newcommand{\beqs}{\begin{equation*}}
\newcommand{\eeqs}{\end{equation*}}
\newcommand{\bal}{\begin{equation}\begin{aligned}}
\newcommand{\eal}{\end{aligned}\end{equation}}
\newcommand{\bals}{\begin{equation*}\begin{aligned}}
\newcommand{\eals}{\end{aligned}\end{equation*}}

\newcounter{num} \numberwithin{num}{section}

\newtheorem{theorem}[num]{Theorem}
\newtheorem{proposition}[num]{Proposition}
\newtheorem{lemma}[num]{Lemma}

\theoremstyle{definition}
\newtheorem{definition}[num]{Definition}

\theoremstyle{remark}
\newtheorem{remark}[num]{Remark}

\numberwithin{equation}{section}

\title{Quantitative De Giorgi Methods in Kinetic Theory for Non-Local Operators}
\author{Amélie Loher}
\date{Jan 6, 2024}

\address[Amélie Loher]{DPMMS, University of Cambridge, Wilberforce road, Cambridge CB3 0WA, UK}
\email{ajl221@cam.ac.uk}

\begin{document}

\begin{abstract}
We derive quantitatively the Harnack inequalities for kinetic integro-differential equations. This implies Hölder continuity. Our method is based on trajectories and exploits a term arising due to the non-locality in the energy estimate. This permits to quantitatively prove the intermediate value lemma for the full range of non-locality parameter $s \in (0, 1)$. Our results recover the results from Imbert and Silvestre \cite{IS} for the inhomogeneous Boltzmann equation in the non-cutoff case. The paper is self-contained.
\end{abstract}

\maketitle
\tableofcontents

\section{Introduction}
\subsection{Problem Formulation}

We consider non-local kinetic equations of the form
\beq
	\partial_t f + v \cdot \nabla_x f = \mathcal L f + h, \quad t \in \R, ~ x \in \R^d, ~ v \in \R^d,
\label{eq:1.1}
\eeq
where we assume $h = h(t, x, v)$ is a real scalar field in $L^\infty$ and for a non-negative measurable kernel $K: \R \times \R^d \times \R^d \times \R^d \to [0, +\infty)$ we define
\beq
	\mathcal L f(t, x, v) := \textrm{PV}\int_{\R^d} K(t, x, v, w) \big[f(t, x, w) - f(t, x, v)\big] \dd w.
\label{eq:1.2}
\eeq
The integral is to be understood in the principal value sense, denoted with $PV$. The question we raise is whether solutions to \eqref{eq:1.1}-\eqref{eq:1.2} satisfy Hölder continuity and Harnack inequalities. The Weak Harnack inequality and Hölder continuity have been obtained by Cyril Imbert and Luis Silvestre in \cite{IS}, yet part of their proof relies on an argument by contradiction. Our aim is to develop a constructive method to deduce the results. 

\subsection{Contribution}

Our contribution consists in a quantitative proof of regularity for non-local kinetic equations of type \eqref{eq:1.1}-\eqref{eq:1.2}. Our results are applicable to the non-cutoff Boltzmann equation. We prove Harnack inequalities and Hölder continuity. The technical assumptions that are required for these statements will be discussed below \eqref{eq:coercivity}-\eqref{eq:cancellation2}.
In short they define a suitable notion of ellipticity on the non-local coefficients of \eqref{eq:1.1}. The analogue statement for local equations requires the coefficients to be uniformly elliptic, that is the eigenvalues of the diffusion matrix lie in $[\lambda, \Lambda]$ for given $0 < \lambda \leq \Lambda$.
The parameter $s\in (0, 1)$ arises from the non-locality of \eqref{eq:1.1} and is hidden in the conditions on the kernel. 
\begin{theorem}[Weak Harnack inequality]\label{thm:weakH}
Let $f$ be a non-negative super-solution to \eqref{eq:1.1}-\eqref{eq:1.2} in $[-3, 0] \times B_1 \times B_1$ with a non-negative kernel $K$ satisfying \eqref{eq:coercivity}-\eqref{eq:cancellation2} for $\bar R = 2$. 
Then there is $C$ and $\zeta > 0$ depending on $s, d, \lambda,\Lambda$ such that for $r_0 < \frac{1}{3}$ the Weak Harnack inequality is satisfied:
\beq
	\Bigg(\int_{\tilde Q_{\frac{r_0}{2}}^-} f^\zeta(z) \dd z\Bigg)^{\frac{1}{\zeta}} \leq C \Bigg(\inf_{Q_{\frac{r_0}{2}}} f + \norm{h}_{L^\infty(Q_1)}\Bigg),
\label{eq:weakH}
\eeq
where $\tilde Q_{\frac{r_0}{2}}^-:= Q_{\frac{r_0}{2}}\Big(\big(-\frac{5}{2}r_0^{2s} + \frac{1}{2} (\frac{r_0}{2})^{2s}, 0, 0\big)\Big)$.
\end{theorem}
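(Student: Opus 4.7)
My plan is to follow the classical De Giorgi–Krylov–Safonov chain, adapted to the kinetic integro-differential setting. The three pillars I would invoke, each established earlier in the paper, are: an $L^2\to L^\infty$ gain for sub-solutions (the quantitative first De Giorgi lemma); a quantitative intermediate value lemma exploiting the non-local cross term in the energy estimate; and propagation of information along the free-transport characteristics $(\tau,\,x-(t-\tau)v,\,v)$ of $\partial_t+v\cdot\nabla_x$.

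First I would establish a measure-to-point lower bound: whenever $f\geq 0$ is a super-solution on a slightly enlarged kinetic cylinder with $\abs{\{f\geq 1\}\cap Q^-}\geq \mu\abs{Q^-}$, then $f\geq\delta>0$ on a future kinetic cylinder of comparable size, with $\mu$ and $\delta$ depending only on the data. I would apply the $L^\infty$ bound to the sub-solution $w:=(\kappa-f)_+$ for a fixed threshold $\kappa>0$: the hypothesis yields smallness of the initial $L^2$ mass of $w$, which transport, the dissipation inherited from \eqref{eq:coercivity}-\eqref{eq:cancellation2}, and the source $\norm{h}_{L^\infty}$ together promote to an $L^\infty$ bound, translating back into a pointwise lower bound on $f$.

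Second, the measure hypothesis above must itself be produced from the presence of a single point where $f\gtrsim 1$. For this I would invoke the intermediate value lemma: for a super-solution whose level sets $\{f\leq \ell\}$ and $\{f\geq \ell+1\}$ both carry a definite fraction of the cylinder, truncating in the non-local energy identity yields a cross term of the form
\beqs
\iint K(v,w)\,\mathbf{1}_{\{f(v)\leq \ell\}}\,\mathbf{1}_{\{f(w)\geq \ell+1\}}\dd v\dd w,
\eeqs
which, combined with \eqref{eq:coercivity}-\eqref{eq:cancellation2}, forces the intermediate level set $\{\ell<f<\ell+1\}$ to occupy a fixed fraction of the cylinder, uniformly in $s\in(0,1)$. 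Iterating across geometrically many levels and transporting the resulting measure information along the free characteristics connects the large-value set to the cube where I need to control the infimum, thereby feeding the hypothesis of the previous step.

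Finally, combining these ingredients on a Krylov–Safonov / Calderón–Zygmund kinetic cube decomposition yields exponential decay of the super-level sets of $f$ inside $\tilde Q_{r_0/2}^-$, which on summing a geometric series produces the weak $L^\zeta$ bound \eqref{eq:weakH} for any $\zeta>0$ sufficiently small. The hard part will be the quantitative intermediate value lemma in the full range $s\in(0,1)$: for $s$ close to $1$ a classical De Giorgi isoperimetric estimate based on the fractional gradient suffices, but for small $s$ the energy estimate no longer controls a gradient of $f$ and the decay must genuinely be extracted from the non-local cross term itself. Making every constant explicit via the trajectory-based construction, rather than closing by compactness as in \cite{IS}, is where the core technical effort goes.
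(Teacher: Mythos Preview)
Your three pillars and the final covering step are the right architecture, and the paper proceeds exactly through a measure-to-pointwise lemma followed by an ink-spot/Vitali covering that yields geometric decay of the super-level sets $\{f>M^k\}$ and hence the $L^\zeta$ bound by layer-cake. The gap is in how you assemble the first two steps.

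In your Step~1 you claim that the hypothesis $\abs{\{f\geq 1\}\cap Q^-}\geq \mu\abs{Q^-}$ already gives smallness of $\norm{(\kappa-f)_+}_{L^2}$, so that the first De~Giorgi lemma alone produces the pointwise lower bound. This is false: the function could equal $1$ on a set of measure $\mu$ and $0$ elsewhere, making $(\kappa-f)_+$ equal to $\kappa$ on a set of measure $1-\mu$, which is not small. Turning a measure hypothesis of the form ``$g\leq 0$ on a definite fraction of $Q^-$'' (for the sub-solution $g=1-f/M$) into $L^2$ smallness of $g_+$ on a smaller cylinder is precisely the job of the Intermediate Value Lemma, iterated across a geometric family of levels: either at some level the $L^2$ mass is already small and the first De~Giorgi lemma finishes, or at every level both assumptions of the IVL hold, producing disjoint intermediate sets whose total measure forces a bound on the number of levels. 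That iteration is the content of the measure-to-pointwise lemma; the first De~Giorgi lemma by itself cannot cross this gap.

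Correspondingly, your Step~2 has the direction reversed: the IVL is not used to ``produce the measure hypothesis from a single point'', but to pass \emph{from} a measure hypothesis \emph{to} a pointwise bound, and it does so inside the measure-to-pointwise lemma rather than as a separate step preceding it. The trajectory argument you mention is indeed the engine, but it lives inside the weak Poincar\'e inequality that underlies the IVL, not as a free-standing propagation step. Once the measure-to-pointwise lemma is established, the covering argument you sketch in Step~3 is exactly how the paper closes.
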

In particular, this implies Hölder continuity. 
\begin{theorem}[Hölder continuity]
Let $f$ be a non-negative weak solution of \eqref{eq:1.1}-\eqref{eq:1.2} in $ [-3, 0] \times B_1 \times B_1$ with a non-negative kernel $K$ satisfying \eqref{eq:coercivity}-\eqref{eq:cancellation2} for $\bar R = 2$. Assume $f$ is essentially bounded in $(-3,0]\times B_1\times \R^d$. Then $f$ is Hölder continuous in $Q_{\frac{1}{2}}$ with Hölder exponent $\alpha \in (0,1)$ depending on $s, d, \lambda, \Lambda, \norm{h}_{L^\infty(Q_1)}$ with
\bal
	{[f]}_{C^\alpha(Q_{\frac{1}{2}})} &:= \sup_{z_1, z_2 \in Q_{\frac{1}{2}},z_1\neq z_2} \frac{\abs{f(z_1)-f(z_2)}}{\abs{z_1-z_2}^\alpha}\\
	&\leq C \big(1+\norm{h}_{L^\infty(Q_1)}\big)\Big(\norm{f}_{L^\infty((-1, 0]\times B_1 \times \R^d)}+\norm{h}_{L^\infty(Q_1)}\Big),
\label{eq:holder}
\eal
where $C$ depends on $d, s, \lambda$ and $\Lambda$.
\label{thm:holder}
\end{theorem}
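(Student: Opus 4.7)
The plan is to deduce Hölder continuity from the Weak Harnack inequality of Theorem~\ref{thm:weakH} by iterating an oscillation reduction along a geometric sequence of kinetically scaled cylinders, in the classical De~Giorgi--Nash--Moser fashion, adapted to the non-local kinetic setting.

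Fix a base point $z_0 \in Q_{\frac{1}{2}}$ and a ratio $\theta \in (0,1)$ to be chosen. Using the Galilean group structure, I consider the kinetic cylinders $Q_{r_k}(z_0)$ with $r_k = \theta^k r_0$ for $r_0 < \frac{1}{3}$ small, and set $M_k := \sup_{Q_{r_k}(z_0)} f$ and $m_k := \inf_{Q_{r_k}(z_0)} f$. I would prove inductively
\beq
	M_k - m_k \leq C_0 \eta^k \bigl(\norm{f}_{L^\infty((-1,0]\times B_1 \times \R^d)} + \norm{h}_{L^\infty(Q_1)}\bigr)
\label{eq:plan_osc}
\eeq
for some $\eta \in (0,1)$. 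At each step, at least one of the two non-negative functions $f - m_k$ or $M_k - f$ takes values $\geq \tfrac{1}{2}(M_k-m_k)$ on a set of positive measure in an appropriate lower sub-cylinder of $Q_{r_k}(z_0)$; without loss of generality, assume it is $f - m_k$.

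The step I expect to be the main obstacle, since it is particular to the non-local setting, is that $f - m_k$ is non-negative on $Q_{r_k}(z_0)$ but may be negative in the velocity complement of $B_{r_k}(v_0)$, so it is not a super-solution of \eqref{eq:1.1}--\eqref{eq:1.2} on the global velocity domain seen by the non-local operator. A direct computation shows that the truncation $(f-m_k)_+$ is a super-solution on $Q_{r_k}(z_0)$ of the same equation with modified source term
\beqs
	\tilde h_k(z) := h(z) - \mathrm{PV}\int_{\R^d} K(z, w)\, (f-m_k)_-(t,x,w)\dd w,
\eeqs
whose $L^\infty$ norm is controlled via the upper bound on the kernel from \eqref{eq:coercivity}--\eqref{eq:cancellation2} and the global $L^\infty$ bound on $f$ over $(-3, 0]\times B_1 \times \R^d$, by $C\bigl(\norm{h}_{L^\infty(Q_1)} + \norm{f}_{L^\infty((-1,0]\times B_1 \times \R^d)}\bigr)$. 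After kinetic rescaling at scale $r_k$—which preserves \eqref{eq:coercivity}--\eqref{eq:cancellation2}—Theorem~\ref{thm:weakH} applied to $(f-m_k)_+$ yields $\inf f \geq m_k + \mu(M_k-m_k) - \tilde C r_k^{2s}\norm{\tilde h_k}_{L^\infty}$ on a shrunken cylinder, and choosing $\theta$ small enough relative to $\mu$ closes the induction \eqref{eq:plan_osc}.

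Once \eqref{eq:plan_osc} is established, the Hölder estimate \eqref{eq:holder} follows by a standard two-point argument: for $z_1, z_2 \in Q_{\frac{1}{2}}$, take the smallest $k$ with $z_2 \in Q_{r_k}(z_1)$ in the kinetic Galilean distance, apply \eqref{eq:plan_osc} at $z_1$, and convert the geometric decay into the polynomial bound $\abs{f(z_1)-f(z_2)} \leq C \abs{z_1-z_2}^\alpha$ with $\alpha = \log(1/\eta)/\log(1/\theta) > 0$. Since $z_1 \in Q_{\frac{1}{2}}$ is arbitrary, this yields \eqref{eq:holder}.
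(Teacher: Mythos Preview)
Your route differs from the paper's: you invoke the Weak Harnack inequality (Theorem~\ref{thm:weakH}) applied to the truncation $(f-m_k)_+$ with a modified source absorbing the non-local tail, whereas the paper iterates the measure-to-pointwise estimate (Lemma~\ref{lem:16}) directly on a suitably normalised function and then rescales. Both are standard patterns for passing from a De~Giorgi-type decrease of oscillation to H\"older continuity, and either can be made to work here, but your sketch contains a genuine gap in the tail control.

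The claim that $\norm{\tilde h_k}_{L^\infty} \leq C\bigl(\norm{h}_{L^\infty(Q_1)}+\norm{f}_{L^\infty}\bigr)$ uniformly in $k$ is not correct. For $v$ in the inner half of $B_{r_k}(v_0)$ the upper bound \eqref{eq:upperbound} on $K$ only gives $\int_{\R^d\setminus B_{r_k}(v_0)} K(v,w)\,\dd w \lesssim r_k^{-2s}$, so the tail contribution in $\tilde h_k$ obeys
\[
\Bigl|\,\mathrm{PV}\!\int_{\R^d}K(v,w)\,(f-m_k)_-(t,x,w)\,\dd w\,\Bigr| \;\lesssim\; r_k^{-2s}\norm{f}_{L^\infty},
\]
which blows up as $k\to\infty$. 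After the kinetic rescaling the source picks up a factor $r_k^{2s}$, so the error you produce is $\tilde C\, r_k^{2s}\norm{\tilde h_k}_{L^\infty} \approx \tilde C\,\norm{f}_{L^\infty}$, a constant that does \emph{not} decay with $k$. The recursion then reads $M_{k+1}-m_{k+1}\leq (1-\mu)(M_k-m_k)+\tilde C\,\norm{f}_{L^\infty}$, which does not close and gives no decay of oscillation.

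The standard remedy (going back to Silvestre and Caffarelli--Silvestre in the elliptic non-local setting, and implicit in \cite{IS}) is to use the induction hypothesis at \emph{all} previous scales simultaneously: since $f\geq m_j$ on $Q_{r_j}(z_0)$ for every $j\leq k$, one has $|(f-m_k)_-(t,x,w)|\leq m_k-m_j\leq M_j-m_j$ for $w$ in the annulus $B_{r_j}(v_0)\setminus B_{r_{j+1}}(v_0)$. The tail then becomes a telescoping sum $\sum_{j<k}\theta^{2s(k-j-1)}(M_j-m_j)$ after rescaling, plus a remainder $\theta^{2sk}r_0^{-2s}\norm{f}_{L^\infty}$ from $\R^d\setminus B_{r_0}(v_0)$. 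Choosing first $\eta\in(0,1)$ close enough to $1$ (depending on $\mu$) and then $\theta$ small enough so that $\theta^{2s}\ll\eta$, this sum is bounded by a small multiple of $M_k-m_k$, and the iteration \eqref{eq:plan_osc} closes. You should make this refinement explicit; without it the argument does not go through as written.
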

Finally, we are able to show a \textit{non-linear} Strong Harnack inequality.
\begin{theorem}[Not-so-Strong Harnack inequality]\label{thm:not-so-strong-H}
Let $f$ be a non-negative solution to \eqref{eq:1.1}-\eqref{eq:1.2} in $[-3, 0] \times B_1 \times B_1$ with a non-negative kernel $K$ satisfying \eqref{eq:coercivity}-\eqref{eq:cancellation2} for $\bar R = 2$. Assume $0 \leq f \leq 1$ in $(-3,0]\times B_1\times \R^d$. 
Then for any $p \in \Big[2, 2+\frac{2s}{d(1+s)}\Big)$ there is $C$ and $\beta \in (0, 1)$ depending on $s, d, \lambda,\Lambda, p$ such that for $r_0 < \frac{1}{3}$
\beq
	\sup_{\tilde Q_{\frac{r_0}{4}}^-} f \leq C \Bigg(\inf_{Q_{\frac{r_0}{4}}} f + \norm{h}_{L^\infty(Q_1)}\Bigg)^{\beta},
\label{eq:strongH}
\eeq
where $\tilde Q_{\frac{r_0}{4}}^-:= Q_{\frac{r_0}{4}}\big((-\frac{5}{2}r_0^{2s} + \frac{1}{2} (\frac{r_0}{2})^{2s}, 0, 0)\big)$. 
\end{theorem}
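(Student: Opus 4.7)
The plan is to upgrade Theorem \ref{thm:weakH} to a non-linear Strong Harnack by combining the Weak Harnack with a local $L^p\!\to\!L^\infty$ bound for non-negative sub-solutions and then exploiting the a priori bound $f \le 1$ to trade integrability exponents. For each fixed $p$ in the allowed range I would first establish a local upper bound of De Giorgi / Moser type:
\beqs
	\sup_{\tilde Q_{\frac{r_0}{4}}^-} f \;\le\; C \Bigg(\int_{\tilde Q_{\frac{r_0}{2}}^-} f^p \dd z\Bigg)^{\frac{1}{p}} + C\norm{h}_{L^\infty(Q_1)}.
\eeqs
This is the sub-solution side analogue of Theorem \ref{thm:weakH} and would be obtained by a Moser iteration on the truncations $(f - \ell_k)_+$, starting from the Caccioppoli/energy inequality for non-local kinetic sub-solutions and closed by a kinetic gain-of-integrability of Bouchut / velocity-averaging type. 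The threshold $2+\frac{2s}{d(1+s)}$ is precisely the Sobolev-type exponent produced by a single application of such an embedding at the natural scaling of the kinetic cylinders, which is why $p$ is restricted to lie strictly below it.

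Given this local bound, I would apply Theorem \ref{thm:weakH} on the same shifted cylinder to obtain, for the small $\zeta = \zeta(s, d, \lambda, \Lambda) > 0$ provided there (shrunk further if necessary so that $\zeta \le p$), the estimate $\big(\int_{\tilde Q_{r_0/2}^-} f^\zeta \dd z\big)^{1/\zeta} \le C\big(\inf_{Q_{r_0/2}} f + \norm{h}_{L^\infty(Q_1)}\big)$. Since $0 \le f \le 1$ in $(-3, 0] \times B_1 \times \R^d$, one has $f^p \le f^\zeta$ pointwise, whence
\beqs
	\Bigg(\int_{\tilde Q_{\frac{r_0}{2}}^-} f^p \dd z\Bigg)^{\frac{1}{p}} \le \Bigg(\int_{\tilde Q_{\frac{r_0}{2}}^-} f^\zeta \dd z\Bigg)^{\frac{1}{p}} \le C \Bigg(\inf_{Q_{\frac{r_0}{2}}} f + \norm{h}_{L^\infty(Q_1)}\Bigg)^{\frac{\zeta}{p}}.
\eeqs
Plugging this into the local upper bound, using $(a+b)^\gamma \le a^\gamma + b^\gamma$ for $\gamma \in (0,1)$, $a, b \ge 0$, to fold the linear $\norm{h}_{L^\infty}$ term into the non-linear right-hand side, and noting $\inf_{Q_{r_0/2}} f \le \inf_{Q_{r_0/4}} f$ since $Q_{r_0/4} \subset Q_{r_0/2}$, one arrives at \eqref{eq:strongH} with $\beta := \zeta/p \in (0,1)$.

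The main obstacle is the local $L^p\!\to\!L^\infty$ bound for non-local kinetic sub-solutions on the shifted cylinder $\tilde Q_{r_0/4}^-$. Two issues have to be addressed. First, the Caccioppoli inequality for \eqref{eq:1.1}-\eqref{eq:1.2} produces non-local tail contributions that need to be controlled; this is exactly where the global bound $f \le 1$ in $v \in \R^d$ together with the cancellation hypothesis \eqref{eq:cancellation2} becomes essential. Second, the Moser iteration must be carried out in the trajectory-adapted geometry employed throughout the paper, so that the shifted cylinders $\tilde Q^-$ are preserved across iterations and the one-step Bouchut / averaging gain matches exactly the threshold exponent $2 + \frac{2s}{d(1+s)}$ appearing in the statement. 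Once this local bound is in place, the remaining steps are essentially bookkeeping.
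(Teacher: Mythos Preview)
Your strategy is essentially the paper's own: combine the Weak Harnack inequality for super-solutions with a local $L^p\!\to\!L^\infty$ bound for sub-solutions, and use $0\le f\le 1$ to trade $f^p$ for $f^\zeta$. The paper carries this out in Section~\ref{subsec:harnack} in a few lines.

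The difference is in the ``main obstacle'' you identify. The paper does \emph{not} need, and does not prove, the \emph{linear} bound
\[
	\sup_{\tilde Q_{r_0/4}^-} f \;\le\; C\norm{f}_{L^p(\tilde Q_{r_0/2}^-)} + C\norm{h}_{L^\infty},
\]
which you propose to obtain by Moser iteration. In fact, the Remark following Lemma~\ref{lem:6.6} explains why the linear version is delicate: it requires absorbing a non-local tail $\sup_{\Omega_{t,x}\times\R^d} f$ on the left, and it is not clear this tail can be bounded by local quantities. What the paper uses instead is the \emph{non-linear} First De~Giorgi Lemma~\ref{lem:6.6}, already proved earlier by De~Giorgi iteration (not Moser), which gives
\[
	\sup_{Q_r} f \;\lesssim\; \Big(\norm{f}_{L^2(Q_R)}^2\Big)^{\beta_2}, \qquad \beta_2 = \tfrac{p-2}{2(2p-2)}.
\]
The global bound $f\le 1$ enters there only to ensure the tail contributions in the energy estimate are controlled, which is built into the hypotheses of Lemma~\ref{lem:6.6}. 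The paper then writes $\varepsilon_1:=\inf_{Q_{r_0/2}} f$, uses $f^2\le f^\zeta$ and the Weak Harnack to get $\int_{\tilde Q_{r_0/2}^-} f^2 \le \varepsilon_1^\zeta$, and applies Lemma~\ref{lem:6.6} with $\varepsilon_0=\varepsilon_1^\zeta$ to conclude $\sup f \le C\varepsilon_1^{\zeta\beta_2}$, so $\beta=\zeta\beta_2$ (not $\zeta/p$ as in your computation).

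In short: your outline is correct and matches the paper, but the step you flag as the main obstacle is already the content of Lemma~\ref{lem:6.6}, and the precise linear form you write down is both unnecessary and possibly unavailable in this setting.
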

By quantifying the proof of the Harnack inequalities, we implemented the whole structure of the De Giorgi argument in the kinetic setting. In particular, we established the Second De Giorgi Lemma, also known as Intermediate Value Lemma, for kinetic integro-differential equations with a non-locality exponent $s \in (0, 1)$. To the best of our knowledge this is the first appearance of a non-local Intermediate Value Lemma in the kinetic setting for the full range $s \in (0, 1)$. The barrier functions $\varphi_0$ and $\varphi_2$ are defined in detail in Subsection \ref{subsec:ivl}, where we prove Theorem \ref{thm:IVL}. For now, we content ourselves with an illustration of the barriers in Figure \ref{fig:barriers}, so that we can figuratively make sense of the following statement.
\begin{theorem}[Intermediate Value Lemma]\label{thm:IVL}
Let $f$ be a sub-solution to \eqref{eq:1.1}-\eqref{eq:1.2} in $[-3, 0] \times B_{1} \times B_{1}$ under assumptions \eqref{eq:coercivity}-\eqref{eq:cancellation2} for $\bar R = 2$ with $0 \leq f \leq 1$ in $(-3, 0]\times B_{1}\times \R^d$. Given $\delta_1, \delta_2 \in (0, 1)$ there is $\mu \lesssim (\delta_1\delta_2)^{6d + 16} $ and $\nu \gtrsim (\delta_1\delta_2)^{18d + 46}$ such that if $\norm{h}_{L^\infty(Q_{3r_0})} \leq C\mu^2$ for $r_0 <  \frac{1}{3}$ and
\beq
	\Abs{\{f\leq 0\}\cap Q^-_{r_0}} \geq \delta_1\abs{Q^-_{r_0}} \quad \textrm{ and }  \quad \Abs{\{f > 1 -\mu^2\}\cap Q_{r_0}} \geq \delta_2\abs{Q_{r_0}},
\label{eq:IVLc}
\eeq
then $f$ satisfies
\beq
	\Abs{\{\varphi_0 < f< \varphi_2\}\cap (-3, 0] \times B_{(\frac{1}{2})^{1+2s}} \times B_{\frac{1}{2}}} \geq \nu\Abs{Q_{\frac{1}{2}}},
\label{eq:IVL}
\eeq
with barriers $\varphi_0, \varphi_2$ defined in \eqref{eq:barriers} and illustrated in Figure \ref{fig:barriers}. Recall $$Q^-_{r_0} = \big(-3r_0^{2s}, -2r_0^{2s}\big] \times B_{r_0^{1+2s}} \times B_{r_0}.$$
\end{theorem}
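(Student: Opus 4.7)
The plan is to run a quantitative non-local De Giorgi isoperimetric argument in the velocity variable, combined with a trajectory-based propagation of information from the past cylinder $Q_{r_0}^-$ to the present cylinder $Q_{r_0}$. Define the truncated excess $g := \bigl((f-\varphi_0)_+\bigr) \wedge (\varphi_2-\varphi_0)$. By the design of the barriers \eqref{eq:barriers}, $g$ vanishes on $\{f \leq \varphi_0\}$, which (for $\varphi_0$ chosen non-negative on $Q_{r_0}^-$) contains the hypothesis set $\{f\leq 0\}\cap Q_{r_0}^-$ of measure at least $\delta_1\abs{Q_{r_0}^-}$; similarly, $g$ equals its maximum value on $\{f \geq \varphi_2\}$, which, for $\mu$ chosen as in the statement, contains the set $\{f>1-\mu^2\}\cap Q_{r_0}$ of measure at least $\delta_2\abs{Q_{r_0}}$. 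The intermediate set $\{\varphi_0 < f < \varphi_2\}$ is exactly the region where $g$ is strictly between $0$ and its maximum.

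First, I would derive an energy (Caccioppoli) estimate for $g$. Since $f$ is a sub-solution and $\varphi_0,\varphi_2$ are chosen as (super-)solutions of the equation up to controllable errors, $g$ satisfies a sub-solution inequality with remainder terms proportional to $\norm{h}_{L^\infty}$ and to derivatives of the barriers; these are absorbed by the hypotheses $\norm{h}_{L^\infty(Q_{3r_0})} \leq C\mu^2$ and by the smallness of $\mu$. Testing against $g$ with a space-velocity cutoff yields control of the non-local Gagliardo semi-norm in $v$, together with the \emph{extra} non-local cross-term highlighted as the main novelty in the paper: a quantity essentially of the form $\int K(t,x,v,w)\mathbf{1}_{\{g(\cdot,v)=0\}}\mathbf{1}_{\{g(\cdot,w)=\max\}}\,dw\,dv$, which measures non-local flux between the low and high level sets of $g$ and remains active for every $s \in (0,1)$.

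Second, I would combine this with the non-local De Giorgi isoperimetric lemma in $v$: if the intermediate set had measure strictly less than $\nu \gtrsim (\delta_1\delta_2)^{18d+46}$, the energy bound and a Chebyshev argument would force most mass of the low and high sets to live at incompatible velocities, contradicting the non-local cross-term lower bound dictated by the ellipticity assumptions \eqref{eq:coercivity}--\eqref{eq:cancellation2}. To actually compare the past low set in $Q_{r_0}^-$ with the present high set in $Q_{r_0}$, I would use the trajectory method: push the hypothesis $\abs{\{f\leq 0\}\cap Q_{r_0}^-}\geq \delta_1\abs{Q_{r_0}^-}$ forward along free characteristics $(t,x+(t-\tau)v,v)$ to produce a "past-adapted" version of $g$ for which the barrier $\varphi_0$ is approximately transported-invariant, so that the isoperimetric inequality can then be applied time slice by time slice inside the slab $(-3,0]\times B_{(1/2)^{1+2s}}\times B_{1/2}$.

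The main obstacle will be obtaining the correct uniform dependence on $s \in (0,1)$ and the explicit polynomial rate in $\delta_1\delta_2$. For $s$ close to $0$ the classical energy estimate is too weak to close the isoperimetric argument; for $s$ close to $1$ the kinetic transport competes with the fractional diffusion. The extra cross-term in the energy estimate is precisely what bridges the two regimes and allows a single constructive argument across the full range. Tracking the exponents $6d+16$ (for $\mu$) and $18d+46$ (for $\nu$) reduces to careful bookkeeping through the Chebyshev steps and the number of De Giorgi iterations needed to amplify the small initial measure into the macroscopic lower bound \eqref{eq:IVL}, rather than to any single hard estimate.
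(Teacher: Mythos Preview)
Your high-level intuition is right: the proof does hinge on an energy estimate that retains the non-local cross-term, combined with a trajectory-based mechanism to compare the past low set with the present high set. But the implementation you describe diverges from the paper in two structural ways, and at least one of them is a genuine gap.

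First, you work with a doubly truncated excess $g=\bigl((f-\varphi_0)_+\bigr)\wedge(\varphi_2-\varphi_0)$ and plan to derive a Caccioppoli inequality directly for it. The paper does not do this; it introduces a \emph{third}, intermediate barrier $\varphi_1$ sitting strictly between $\varphi_0$ and $\varphi_2$ (with a gap of order $1-\mu$ from $\varphi_0$ and of order $\mu-\mu^2$ from $\varphi_2$), and tests the equation against the singly truncated $(f-\varphi_1)_+$. This is not cosmetic. The three-level structure is what makes the cross-term quantitatively useful: on $\{f<\varphi_0\}$ one has $(f-\varphi_1)_-\le -(1-\mu)$, which converts the energy cross-term $-\int (f-\varphi_1)_+(v)(f-\varphi_1)_-(w)K(v,w)$ into a lower bound of order $1$ (not just $\mu$) times the interaction, and this is exactly what allows the estimates \eqref{eq:up1.5} and \eqref{eq:s1-aux1} to close with the right power of $\mu$. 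With only two barriers, the gap between the low and high sets is of order $1-\mu^2$, and you would not obtain the extra factor of $\mu^{1/2}$ needed to beat the right-hand side; moreover, a double truncation does not yield a clean sub-solution inequality for $g$ itself.

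Second, and more seriously, the paper does not use a ``non-local isoperimetric lemma in $v$'' at all. The central analytic tool is the \emph{weak kinetic Poincar\'e inequality} (Proposition~\ref{prop:13}), proved separately by the trajectory method, which bounds $\bigl\|\bigl((f-\varphi_1)_+-\langle(f-\varphi_1)_+\rangle_{Q_{r_0}^-}\bigr)_+\bigr\|_{L^1(Q_{r_0})}$ by the fractional gradient in $v$, an anti-symmetric term, a small $W^{\sigma,1}_x$ error, and the source. This is where the past-to-present propagation happens, and it is in all variables at once, not a slice-by-slice argument in $v$. The proof of Theorem~\ref{thm:IVL} then runs as follows: the hypotheses \eqref{eq:IVLc} give a \emph{lower} bound of order $(\mu\delta_1-\mu^2)\delta_2$ on the left-hand side of the Poincar\'e inequality; the right-hand side is bounded \emph{above} by splitting the domain into $\{f<\varphi_0\}$, $\{\varphi_0<f<\varphi_2\}$, $\{f>\varphi_2\}$ and repeatedly invoking the cross-term from the improved energy estimate \eqref{eq:enestimimprov} to kill the first and third pieces. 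One then rearranges to isolate $\abs{\{\varphi_0<f<\varphi_2\}}^{1/2}$. Your plan to argue by contradiction via Chebyshev and an isoperimetric inequality in $v$ alone would not connect $Q_{r_0}^-$ to $Q_{r_0}$; the trajectory argument has to be done at the level of the Poincar\'e inequality, not bolted on afterwards.
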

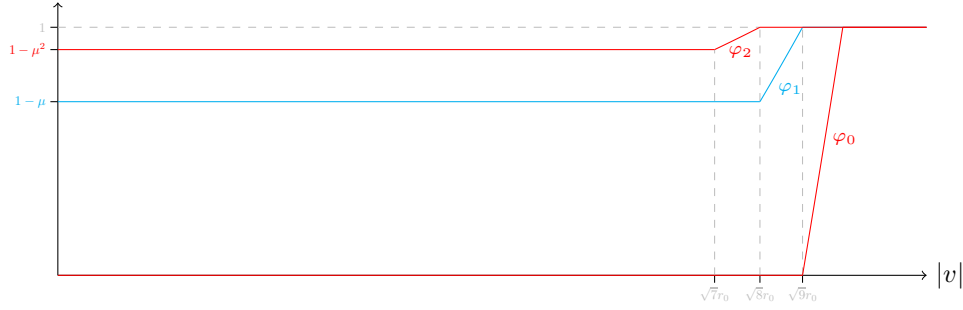
\begin{figure}{
\centering\begin{center}
\begin{tikzpicture}[scale=3.3]
  \draw[->] (0, 0) -- (3.5, 0) node[right] {$\abs{v}$};
  \draw[->] (0, 0) -- (0, 1.1);
\draw (0,1) -- (-0.03, 1) node[anchor=east, scale =0.5, lightgray] {$1$};
\draw (0,0.91) -- (-0.03, 0.91) node[anchor=east, scale =0.5, red] {$1-\mu^2$};
\draw (0,0.7) -- (-0.03, 0.7) node[anchor=east, scale =0.5, cyan] {$1-\mu$};
\draw (2.64575131106, 0) -- (2.64575131106, -0.03) node[anchor=north, scale =0.5, lightgray] {$\sqrt{7} r_0$};
\draw (2.82842712475, 0) -- (2.82842712475, -0.03) node[anchor=north, scale =0.5, lightgray] {$\sqrt{8} r_0$};
\draw (3, 0) -- (3, -0.03) node[anchor=north, scale =0.5, lightgray] {$\sqrt{9} r_0$};
\draw[dashed, lightgray] (0, 1) -- (2.82842712475,1);
\draw[dashed, lightgray]  (2.82842712475, 0)  -- (2.82842712475, 1);
\draw[dashed, lightgray]  (3, 0)-- (3, 1);
\draw[dashed, lightgray] (2.64575131106, 0) -- (2.64575131106, 0.91);
  % phi_1
   \draw[scale = 1, cyan] (2.95, 0.75) node {\scriptsize{$\varphi_1$}};
  \draw[scale=1, domain=0:2.82842712475, smooth, variable=\v, cyan] plot ({\v}, {1-0.3});%node[anchor=west, scale =0.65] {$\varphi_1$};
  \draw[scale=1, domain=2.82842712475:3, smooth, variable=\v, cyan] plot ({\v}, {1+ 0.3*((\v *\v)-9)});
  \draw[scale=1, domain=3:3.5, smooth, variable=\v, cyan] plot ({\v}, {1});
    % phi_0
  \draw[scale = 1, red] (3.17, 0.55) node {\scriptsize{$\varphi_0$}};
  \draw[scale=1, domain=0:3, smooth, variable=\v, red]  plot ({\v}, {0}); %node[anchor=west, scale =0.65] {$\varphi_0$} ;
  \draw[scale=1, domain= 3:3.16227766017, smooth, variable=\v, red] plot ({\v}, {1+ (\v *\v)-10}) ;
;
  \draw[scale=1, domain= 3.16227766017:3.5, smooth, variable=\v, red] plot ({\v}, {1});
  % phi_2
   \draw[scale = 1, red] (2.75, 0.9) node {\scriptsize{$\varphi_2$}};
  \draw[scale=1, domain=0:2.64575131106, smooth, variable=\v, red] plot ({\v}, {1-0.09}); %node[anchor=south, scale =0.65] {$\varphi_2$};
  \draw[scale=1, domain=2.64575131106:2.82842712475, smooth, variable=\v, red] plot ({\v}, {1+ 0.09*((\v *\v)-8)});
  \draw[scale=1, domain=2.82842712475:3.5, smooth, variable=\v, red] plot ({\v}, {1});
\end{tikzpicture}
\end{center}
}
\caption{The barriers $\varphi_0, \varphi_1, \varphi_2$ in the Intermediate Value Lemma, Theorem \ref{thm:IVL}, for $r_0 < \frac{1}{3}$, $\mu < 1$ and fixed $x \in B_{(3r_0)^{1+2s}}$.}\label{fig:barriers}
\end{figure}
We view Theorem \ref{thm:IVL} as our main contribution. In Figure \ref{fig:cylinders} we illustrate the domains of our main results.
\begin{figure}
\centering
\begin{tikzpicture}[scale =0.95]
\draw[dashed] (-6, 0) -- (6, 0) node[anchor=north, scale=1] {$Q_1$};
% big square
  \draw [blue] (-3.5,0) -- (3.5,0); 
   \draw[blue] (-3.5,-7) -- (3.5,-7);
  \draw [blue](-3.5,0) -- (-3.5,-7);
  \draw [blue](3.5,0) -- (3.5,-7) node[anchor=north, scale=1] {$Q_{\frac{1}{2}}$};
  %medium
   \draw [violet](-1,0) -- (1, 0);
   \draw [violet](-1,-2) -- (1, -2);
   \draw [violet](-1,0) -- (-1, -2);
   \draw [violet](1,0) -- (1, -2)  node[anchor=north, scale=1] {$Q_{r_0}$};
   %medium past
    \draw[violet] (-1,-4) -- (1, -4);
   \draw [violet](-1,-6) -- (1, -6);
   \draw [violet](-1,-4) -- (-1, -6);
   \draw [violet](1,-4) -- (1, -6)  node[anchor=north, scale=1] {$Q^-_{r_0}$};
   %\small
     \draw [orange](-0.5,0) -- (0.5, 0);
   \draw[orange] (-0.5,-1) -- (0.5, -1);
   \draw [orange](-0.5,0) -- (-0.5, -1);
   \draw [orange](0.5,0) -- (0.5, -1)  node[anchor=north, scale=0.65] {$Q_{\frac{r_0}{2}}$};
   %\small past
    \draw[orange] {(-0.5,-4.5) -- (0.5, -4.5)};
   \draw [orange](-0.5,-5.5) -- (0.5, -5.5);
   \draw [orange](-0.5,-4.5) -- (-0.5, -5.5);
   \draw [orange](0.5,-4.5) -- (0.5, -5.5) node[anchor=north, scale=0.65] {$\tilde Q^-_{\frac{r_0}{2}}$};
   %\tiny
     \draw [red](-0.25,0) -- (0.25, 0);
   \draw [red](-0.25,-0.5) -- (0.25, -0.5);
   \draw[red] (-0.25,0) -- (-0.25, -0.5);
   \draw[red] (0.25,0) -- (0.25, -0.5) node[anchor=north, scale=0.65] {$Q_{\frac{r_0}{4}}$};
      %\tiny past
     \draw[red] (-0.25,-4.5) -- (0.25, -4.5);
  \draw[red] (-0.25,-5) -- (0.25, -5);
   \draw[red] (-0.25,-4.5) -- (-0.25, -5);
   \draw[red] (0.25,-4.5) -- (0.25, -5)node[anchor=north, scale=0.65] {$\tilde Q^-_{\frac{r_0}{4}}$};
\end{tikzpicture}
\caption{\small{For a solution in $Q_1$, we prove Hölder continuity \ref{thm:holder} in $Q_{\frac{1}{2}}$. The Intermediate Value Lemma, Theorem \ref{thm:IVL}, relates $Q_{r_0}$ to $Q^-_{r_0}$ in the past. The Weak Harnack inequality, Theorem \ref{thm:weakH}, relates $Q_{\frac{r_0}{2}}$ with $\tilde Q^-_{\frac{r_0}{2}}$ and the Not-so-Strong Harnack inequality, Theorem \ref{thm:not-so-strong-H}, relates $Q_{\frac{r_0}{4}}$ with $\tilde Q^-_{\frac{r_0}{4}}$ in the past. Note that all cylinders depend on $s \in (0, 1)$.}}\label{fig:cylinders}
\end{figure}
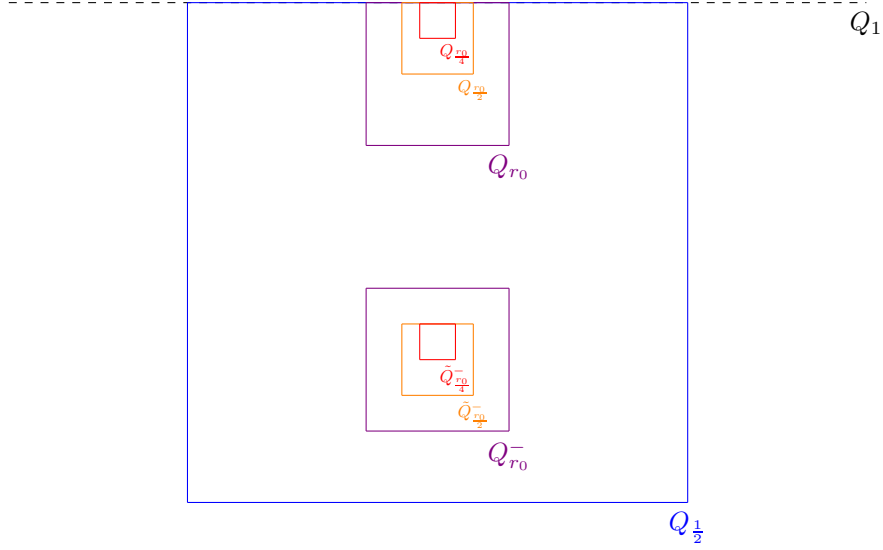

\subsection{Ellipticity Class of $K$}
We introduce the ellipticity class of the kernel $K$ in \eqref{eq:1.2} as follows.
Even though the kernel depends in general on time and space we will often omit to write out this dependency for the sake of brevity.
We let $s \in (0, 1)$, $\bar R >0$ and $0 < \lambda < \Lambda$. Let $\varphi : \R^d \to \R$ be supported in $B_{\frac{\bar R}{2}}$. Then we assume that $K: [-\bar R^{2s}, 0]\times B_{{\bar R}^{1+2s}} \times \R^d  \times \R^d \to [0, +\infty)$ is coercive in the sense that
\beq
	\lambda \int_{B_{\bar R/2}}\int_{B_{\bar R/2}}  \frac{\abs{\varphi(v)-\varphi(w)}^2}{\abs{v-w}^{d+2s}} \dd v\dd w \leq \int_{B_{\bar R}}\int_{B_{\bar R}} \big(\varphi(v)-\varphi(w)\big)^2K(v, w) \dd w\dd v,
\label{eq:coercivity}
\eeq
and
\beq
	\lambda \int_{B_{\bar R/2}}\int_{B_{\bar R/2}} \frac{\abs{\varphi(v)-\varphi(w)}}{\abs{v-w}^{\frac{d+2s}{2}}} \dd v\dd w \leq \int_{B_{\bar R}}\int_{B_{\bar R}} \abs{\varphi(v)-\varphi(w)}K^{\frac{1}{2}}(v, w) \dd w\dd v.
\label{eq:coercivity-sqrt}
\eeq
Moreover, we require the following upper bound for $r > 0$
\beq
	\forall v \in B_{\bar R} \quad \int_{B_r(v)}K(v,w)\abs{v -w}^2\dd w \leq \Lambda r^{2-2s}.
\label{eq:upperbound-2}
\eeq
Finally, instead of the usual symmetry assumption $K(v, w) = K(w, v)$, which corresponds to the divergence form structure of local equations, we assume the following cancellation
\beq
	\forall v \in \R^d \quad \Bigg\vert \textrm{PV} \int_{\R^d} \big(K(v, w) - K(w, v)\big)\dd w\Bigg\vert \leq \Lambda,
\label{eq:cancellation1}
\eeq
and if $s \geq \frac{1}{2}$ we assume for all $r>0$ 
\beq
	\forall v \in\R^d \quad \Bigg\lvert \textrm{PV} \int_{B_r(v)} (v -w)\big(K(v, w) - K(w, v)\big)\dd w\Bigg\rvert \leq \Lambda \big(1+r^{1-2s}\big).
\label{eq:cancellation2}
\eeq
We remark that the coercivity \eqref{eq:coercivity} together with the cancellation assumption \eqref{eq:cancellation1} imply 
\beq
	\lambda \int_{\R^d}\int_{\R^d} \frac{\abs{\varphi(v)-\varphi(w)}^2}{\abs{v-w}^{d+2s}} \dd v\dd w \leq \int_{\R^d}\int_{\R^d} \big(\varphi(v)-\varphi(w)\big)K(v, w) \varphi(v) \dd w\dd v + \Lambda \norm{\varphi}_{L^2(\R^d)}^2,
\label{eq:coercivity-full}
\eeq
where $\varphi$ is supported in $B_{\frac{\bar R}{2}}$.
Moreover the upper bound \eqref{eq:upperbound-2} is equivalent to 
\bal
	\forall v \in B_{\bar R}, \forall r > 0 \quad \int_{B_{2r}(v)\setminus B_r(v)}K(v,w)\dd w \leq \Lambda r^{-2s}, \\
	\forall w \in B_{\bar R}, \forall r > 0 \quad \int_{B_{\bar R}\cap B_{2r}(w)\setminus B_r(w)}K(v,w)\dd v \leq \Lambda r^{-2s}.
\label{eq:upperbound}
\eal
We also note that \eqref{eq:cancellation2} holds for $s \in \Big(0, \frac{1}{2}\Big)$ as a consequence of \eqref{eq:cancellation1} and \eqref{eq:upperbound-2}. These assumptions are inspired from the work of Imbert and Silvestre \cite{IS}. 
In particular it is clear that the fractional Laplacian $(-\Delta_v)^s$ satisfies \eqref{eq:coercivity}-\eqref{eq:cancellation2}. The reason why we do not want to restrict the kernel to the fractional Laplacian is so that we are able to apply our result to the non-cutoff Boltzmann equation. 
\subsection{Non-cutoff Boltzmann equation}
The Boltzmann equation is given by
\beq
	\partial_t f + v\cdot \nabla_x f = Q(f, f),
\label{eq:boltzmann}
\eeq
where the collision operator $Q$ has the form
\beqs
	Q(f, g) := \int_{\R^d}\int_{\mathbb{S}^{d-1}} \big[f(w_*)g(w) - f(v_*)g(v)\big] B(\abs{v - v_*}, \cos \theta)\dd \sigma \dd v_*, 
\eeqs
with 
\beqs
	w = \frac{v + v_*}{2} +\frac{\abs{v - v_*}}{2}\sigma, \quad w_* = \frac{v + v_*}{2} -\frac{\abs{v - v_*}}{2}\sigma,
\eeqs	
and $\cos \theta$ is defined as 
\beqs
	\cos \theta := \frac{v - v_*}{\abs{v-v_*}} \cdot\sigma, \quad \sin(\theta/2) := \frac{w -v}{\abs{w-v}}\cdot \sigma.
\eeqs
The cross-section $B$ is given by 
\beqs
	B(r, \cos\theta) = r^\gamma b(\cos\theta), \quad b(\cos\theta) \approx \abs{\sin(\theta/2)}^{-(d-1)-2s},
\eeqs
with $\gamma \in (-d, 1]$ and $s \in (0, 1)$. A solution to \eqref{eq:boltzmann} describes the density of particles at time $t$ and location $x$ with velocity $v$. The collision operator can be rephrased as
\beqs
	Q(f, g) = \int \big[g(w) - g(v)\big] K_f(v, w)\dd w + \rm{lower ~ order ~ terms},
\eeqs
where the kernel $K_f$ depends implicitly on the solution $f$ and is determined by
\beq
	\int_{\R^d}\int_{\mathbb{S}^{d-1}} f(w_*)\big[g(w) - g(v)\big] B(\abs{v - v_*}, \cos \theta)\dd \sigma \dd v_* = \int_{\R^d} \big[g(w) - g(v)\big] K_f(v, w)\dd w.
\label{eq:boltzmann_kernel}
\eeq
We define the following macroscopic quantities associated to \eqref{eq:boltzmann} 
\bal
	&M(t, x) := \int f(t, x, v) \dd v,\\
	&E(t, x) := \int f(t, x, v) \abs{v}^2 \dd v,\\
	&H(t, x) := \int f(t, x, v) \ln f(t, x, v) \dd v.
\label{eq:hydros}
\eal
They describe the mass density, the energy and the entropy respectively. Under the physically meaningful assumption that $M$ is uniformly bounded from above and below, and that $E$ and $H$ are both uniformly bounded above, it can be checked that $K_f$ satisfies \eqref{eq:coercivity}-\eqref{eq:cancellation2} with $\lambda$ depending on the mass, energy and entropy of $f$ and $\Lambda$ depending on the mass, energy and $\norm{f}_{L^\infty}$. We refer the interested reader to Section 3  and Lemma A.6 of \cite{IS}. The fact that the Boltzmann kernel satisfies \eqref{eq:coercivity-sqrt} can be demonstrated in the same way as \cite[Lemma A.6]{IS} and is stated in \ref{appendix} below. Therefore Theorem \ref{thm:holder} implies
\begin{theorem}[Imbert-Silvestre]
Let $s \in (0, 1)$ and $\gamma \in (-d, 1]$ be such that $\gamma + 2s \leq 2$. Let $f$ be a non-negative solution to the Boltzmann equation \eqref{eq:boltzmann} for $(t, x, v) \in (-3, 0]\times B_1\times B_1$. Assume that $f$ is essentially bounded in $(-3, 0]\times B_1\times \R^d$ and that there exists $M_0, M_1, E_1$ such that $M_0 \leq M(t, x) \leq M_1, ~ E(t, x) \leq E_1$ for all $(t, x) \in (-3, 0]\times B_1$. Then $f$ is Hölder continuous in $Q_{1/2} := \big(-(1/2)^{2s}, 0\big]\times B_{(1/2)^{1+2s}} \times B_{1/2}$ and for $\alpha \in (0, 1)$ there holds
\beqs
	[f]_{C^\alpha(Q_{1/2})} \leq C,
\eeqs
where $C$ depends on $s, d, \norm{f}_{L^\infty}, M_0, M_1, E_1$. 
\label{thm:boltzmann}
\end{theorem}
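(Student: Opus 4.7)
The plan is to recast the Boltzmann equation as an instance of \eqref{eq:1.1}-\eqref{eq:1.2} and apply Theorem \ref{thm:holder}. Writing the collision operator in Carleman form yields
\beqs
	Q(f,f)(v) = \int_{\R^d}\bigl[f(w) - f(v)\bigr]\,K_f(v,w)\dd w + c(f)(v)\, f(v),
\eeqs
where $K_f$ is defined by \eqref{eq:boltzmann_kernel} and $c(f)$ is the convolution of $f$ with an integrable weight arising from the cross-section. Under the macroscopic bounds $M_0 \leq M \leq M_1$ and $E \leq E_1$, and provided $\gamma+2s \leq 2$, the coefficient $c(f)$ is bounded in $L^\infty$, so the forcing term $h := c(f)\,f$ lies in $L^\infty(Q_1)$ with norm controlled by $M_1, E_1$ and $\norm{f}_{L^\infty}$.

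Next, I would verify that $K_f$ belongs to the ellipticity class \eqref{eq:coercivity}-\eqref{eq:cancellation2}. The coercivity \eqref{eq:coercivity}, the upper bound \eqref{eq:upperbound-2} (where $\gamma+2s \leq 2$ is again used), and the cancellation conditions \eqref{eq:cancellation1}-\eqref{eq:cancellation2} are established in \cite[Section 3 and Lemma A.6]{IS}, with constants $\lambda, \Lambda$ depending only on $s, d, \gamma, M_0, M_1, E_1$ and $\norm{f}_{L^\infty}$; the entropy bound that appears as input in \cite{IS} follows from $M \leq M_1$ together with $f \in L^\infty$. The remaining square-root coercivity \eqref{eq:coercivity-sqrt}, which is new to this paper and not contained in \cite{IS}, is verified in Appendix \ref{appendix} by a modification of the argument of \cite[Lemma A.6]{IS}.

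With the above, $f$ is a non-negative weak solution of \eqref{eq:1.1}-\eqref{eq:1.2} with kernel $K_f$ in the ellipticity class and with $h \in L^\infty(Q_1)$, essentially bounded on $(-3,0]\times B_1 \times \R^d$ by hypothesis. Theorem \ref{thm:holder} then applies directly and produces the claimed Hölder seminorm bound, with constant depending on the quantities listed in its statement, which here translate into $s, d, \norm{f}_{L^\infty}, M_0, M_1, E_1$. The real difficulty in the overall argument lies not in this final reduction but in the preceding kernel analysis: one must show that the highly singular, nonlinear Boltzmann collision kernel belongs to an ellipticity class whose constants depend only on the macroscopic quantities of $f$, which is precisely the content of \cite[Lemma A.6]{IS} together with the square-root variant in the appendix.
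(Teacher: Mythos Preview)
Your proposal is correct and follows exactly the approach the paper takes: the paper does not give a formal proof of this theorem but simply explains (in the paragraph preceding the statement) that the Boltzmann kernel $K_f$ satisfies \eqref{eq:coercivity}--\eqref{eq:cancellation2} by \cite[Section~3 and Lemma~A.6]{IS} and Appendix~\ref{appendix}, with the entropy bound supplied by the subsequent remark, so that Theorem~\ref{thm:holder} applies. Your write-up is in fact slightly more explicit than the paper's, correctly identifying the lower-order term $h = c(f)f$ and the two places where the hypothesis $\gamma+2s\leq 2$ enters.
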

\begin{remark}
The entropy $H$ satisfies a uniform bound from above in terms of $M_1, E_1$ and $\norm{f}_{L^\infty}$. Thus we do not require the additional assumption $H \leq H_1$ in Theorem \ref{thm:boltzmann}.
\end{remark}

\subsection{Invariant transformations}
Let $f$ solve \eqref{eq:1.1}. Then for $r \in [0, 1]$ the scaled function $f_r(t, x, v) = f(r^{2s}t, r^{1+2s}x, rv)$ satisfies
\beqs
	\partial_t f_r + v\cdot \nabla_x f_r = \mathcal L_r f_r + h_r,
\eeqs
where $\mathcal L_r$ is the non-local operator associated to the scaled kernel
\beqs
	K_r(t, x, v, w) = r^{d+2s} K(r^{2s}t, r^{1+2s}x, r v, rw), 
\eeqs
and the source is scaled as
\beqs
	h_r(t, x, v) = r^{2s} h(r^{2s}t, r^{1+2s}x, r v).
\eeqs
For $r \in [0, 1]$ the scaled kernel $K_r$ satisfies \eqref{eq:coercivity}-\eqref{eq:cancellation2} in the larger radius $\frac{\bar R}{r}$ instead of $\bar R$. Moreover, $\norm{h_r}_{L^\infty(Q_1)} \leq r^{2s}\norm{h}_{L^\infty(Q_1)} \leq \norm{h}_{L^\infty(Q_1)}$.

Furthermore, the equation is invariant under Galilean transformations $z \to z_0 \circ z = (t_0 + t, x_0+x + tv_0, v_0 + v)$ with $z_0 = (t_0, x_0, v_0)\in \R^{1+2d}$. If $f$ is a solution of \eqref{eq:1.1} then its Galilean transformation $f_{z_0}(z) = f(z_0 \circ z)$ solves
\beqs
	\partial_t f_{z_0} + v\cdot \nabla_x f_{z_0} = \mathcal L_{z_0} f_{z_0} + h_{z_0},
\eeqs
where $\mathcal L_{z_0}$ is the non-local operator associated to the translated kernel
\beqs
	K_{z_0}(t, x, v, w) = K(z_0 \circ z, v_0 + w), 
\eeqs
and with source
\beqs
	h_{z_0}(t, x, v) = h(z_0 \circ z).
\eeqs
Again the modified kernel $K_{z_0}$ satisfies \eqref{eq:coercivity}-\eqref{eq:cancellation2} provided that $K$ does. 

In view of these invariances we introduce kinetic cylinders 
\beqs
	Q_r(z_0) := \Big\{(t, x, v) : -r^{2s} \leq t - t_0 \leq 0, \abs{v - v_0} < r, \Abs{x - x_0 -(t-t_0)v_0} < r^{1+2s}\Big\},
\eeqs
for $r > 0$ and $z_0 = (t_0, x_0, v_0) \in \R^{1+2d}$. For later reference, we also introduce the cylinder shifted to the past
\beqs
	Q^-_r(z_0) := Q_r\big(z_0 - (2r^{2s}, 2r^{2s}v_0, 0)\big),
\eeqs
so that in particular for $z_0 = 0$
\beqs
	Q^-_r := Q_r\big(-2r^{2s}, 0, 0\big) = \big(-3r^{2s}, -2r^{2s}\big] \times B_{r^{1+2s}} \times B_r.
\eeqs
Similarly the cylinder shifted to the future is denoted as
\beqs
	Q^+_r(z_0) := Q_r\big(z_0 + (2r^{2s}, 2r^{2s}v_0, 0)\big).
\eeqs
Finally, if we merely use the space and velocity domain of a cylinder, we denote it as $Q^t_r := B_{r^{1+2s}} \times B_r$ where $t \in \mathcal I$ for some specified interval $\mathcal I \subset \R$.

\subsection{Further remarks}
Our equation involves a transport term, which transfers some regularity of the velocity variable to the space variable. It also involves a non-local diffusion in the $v$ variable. 
Our motivation to study the regularity of this type of equation is linked to the question of well-posedness for smooth classical solutions of the inhomogeneous Boltzmann equation without cut-off. There are linear kinetic equations whose solutions in the hydrodynamic limit are described by a fractional diffusion \cite{mellet, MMM, BM}. Indeed diffusion limits of the linear Boltzmann equation with a heavy-tailed distribution of infinite variance as an equilibrium distribution give rise to a fractional diffusion equation \cite{MMM}. Such heavy-tailed distribution functions arise in astrophysical plasmas \cite{mendisrosen} or also in granular gases through dissipative collision mechanisms \cite{ErnstBrito}. However, the only source of fractional diffusion at the kinetic level stems from long range interactions of the Boltzmann collision kernel. 

In the limit case $s \to 1$, equation \eqref{eq:1.1} models the local kinetic Fokker-Planck equation, whose study is motivated by applications to the Landau equation \cite{GIMV}. For the local case, there is a non-constructive method discussed in \cite{GIMV}. A constructive proof first appeared in a series of works \cite{whangzhang1, whangzhang2, whangzhang3} for ultraparabolic equations that has further been developed in \cite{JGCI} to local kinetic Fokker-Planck type equations. The construction is based on a Poincaré-type inequality and Kruzhkov's method \cite{kruzhkov}. A novel constructive approach for local kinetic Fokker-Planck type equations has been devised by Guerand and Mouhot in \cite{JGCM}. Their method relies on trajectories. For general $s \in (0, 1)$, Cyril Imbert and Luis Silvestre (together with Clément Mouhot in \cite{IMSfrench, IMS}) made important contributions in a series of papers \cite{IS59, IS, ISschauder, IMSfrench, IMS} that culminated in the final work \cite{ISglobal}. In \cite{IS59}, Silvestre proves for a certain range of $s$ that any solution $f$ is a priori essentially bounded provided that the mass, energy and entropy defined above \eqref{eq:hydros} satisfy uniform bounds from above and that the mass is bounded below. 
With an additional non-degeneracy assumption in \cite{IS}, they obtain the Weak Harnack inequality with a quantitative argument when $s \in (0, \frac{1}{2})$. Note that their method, which uses barrier functions, can be extended to $s \in (0, 1)$ under the additional symmetry assumption $K(v, v+w) = K(v, v-w)$, cf. section 7 in \cite{IS}. When $s \in (\frac{1}{2}, 1)$ they prove an Intermediate Value Lemma using an argument by contradiction. In \cite{ISschauder}, Imbert and Silvestre derive Schauder estimates for kinetic equations, which can then be bootstrapped for the non-cutoff Boltzmann equation to obtain smooth solutions \cite{ISglobal}. 
Let us also mention a work by Stokols \cite{Stokols}, which combines the method of \cite{GIMV} with fractional estimates from \cite{CCV} to obtain a non-constructive proof of Hölder continuity in the non-local case. The assumptions on the kernel in \cite{Stokols} are stronger than ours: the kernel is assumed to satisfy pointwise upper and lower bounds in $(v, w)$, as well as the symmetry conditions $K(v, w) = K(w, v)$ and $K(v, v+w) = K(v, v-w)$. We need more general assumptions to be able to apply our results to the Boltzmann equation.

The paper presents a constructive proof of Hölder continuity and Harnack inequalities using a De Giorgi-type argument. De Giorgi methods were originally established for non-linear elliptic equations by De Giorgi \cite{DG}. Moser then showed how to deduce a Harnack inequality \cite{Mos1, Mos2}. The Harnack inequalities are local regularity results. In particular, the Weak Harnack inequality implies Hölder continuity \cite{gilbargtrudinger}. By using an argument based on trajectories, we derive the Intermediate Value Lemma, which in turn implies Hölder continuity for the non-cutoff Boltzmann equation. In order to cover the full range of non-locality parameter $s \in (0, 1)$ we have to carefully exploit the cross-term arising in the energy estimate similar to Section 4 in \cite{CCV}. This is a purely non-local effect. To the best of our knowledge, this is the first Intermediate Value Lemma for non-local kinetic equation covering the full range $s \in (0, 1)$. 

The Not-so-Strong Harnack inequality is obtained by combining the Weak Harnack inequality with De Giorgi's first lemma. The Strong Harnack inequality for non-local equations has been discussed in \cite{caff_silv1, caff_silv2, CKW} for the elliptic case and in \cite{bass_levin, chen_kumagai, chen_kumagai_wang, kass11} for the parabolic case. Most of these works consider non-local operators with symmetric kernels that verify a pointwise upper and lower bound. In general the Strong Harnack inequality fails for non-local equations, if the function is not assumed to be non-negative \cite{kass_counterex}. Instead one gets an additional term from the tail of the negative part of the function on the right hand side \cite{kass11}. The parabolic Harnack inequality without tail term has so far only been proved using probabilistic tools such as heat kernel estimates \cite{chen_kumagai, chen_kumagai_wang}. There are no analogous results in the hypo-elliptic case. In \cite{kasswei}, they prove the Harnack inequality with a tail term on the right hand side for non-local parabolic equations. Under certain assumptions \cite[Conditions (a) and (b) on p. 47]{kasswei} the non-local tail term can be bounded by a local quantity, which could then be absorbed on the left hand side of the inequality via a standard covering argument, yielding the Strong Harnack inequality. However, these additional assumptions do not naturally follow. But from the Weak Harnack inequality it is a straight-forward consequence to prove a weaker non-linear Strong Harnack inequality for essentially bounded solutions in the sense of Theorem \ref{thm:not-so-strong-H}.

\subsection{Structure of the article}
The core of the proof relies on the following method:
\bals
	\left.\begin{array}{r@{\mskip\thickmuskip}l}
	\textrm{Energy estimate} &\xrightarrow[{0 \leq f \leq 1}]{} \textrm{First De Giorgi Lemma}\\
	\textrm{Weak Poincaré inequality} &\xrightarrow[{0 \leq f \leq 1}]{} \textrm{Second De Giorgi Lemma}\\
	\end{array} \right\}
	\quad \xrightarrow \quad
 	\textrm{Weak Harnack}.
\eals
Note that in the final statement we require our function to be essentially bounded for $v$ in $\R^d$. The reason behind this is to be able to take care of the tails of the non-local operator. The energy estimate and the weak Poincaré inequality hold for any sub-solution of the equation, whereas for the First and Second De Giorgi Lemma we need to assume that $f \leq 1$ almost everywhere.

In Section \ref{sec:2} we fix the notation, discuss definitions of weak solutions for \eqref{eq:1.1} and state a result of boundedness in $H^s \times H^s$ of the bilinear form \eqref{eq}.

For De Giorgi's First Lemma we prove an energy estimate in Section \ref{subsec:intest} and gain integrability in Lebesgue spaces using estimates on the fundamental solution of the fractional Kolmogorov equation. We deduce De Giorgi's First Lemma \ref{lem:6.6} by a classical \textit{De Giorgi iteration} in Section \ref{subsec:firstDG}.

De Giorgi's Second Lemma, Theorem \ref{thm:IVL}, is proved in Section \ref{subsec:secondDG} by first proving a weak Poincaré inequality in $L^1$ using the trajectorial approach of \cite{JGCM}. The difficulties that arise for small values of $s$ are dealt with by introducing suitable level functions inspired from \cite{CCV}.

We deduce Hölder continuity, Theorem \ref{thm:holder}, and the Harnack inequalities, Theorem \ref{thm:weakH} and Theorem \ref{thm:not-so-strong-H}, in Section \ref{sec:6}. Hölder continuity follows by standard arguments either from the Weak Harnack inequality or by the measure-to-pointwise estimate. To prove the Harnack inequalities, we use a covering argument, which we adapt from \cite{JGCM}. The geometric construction for the covering had to account for the specific scaling of the fractional diffusion. Once we have obtained the Weak Harnack inequality, we combine it with the First De Giorgi Lemma to deduce the non-linear Strong Harnack inequality. 

\section{Weak Formulation}
\label{sec:2}
\subsection{Notation}
A constant is called universal, if it only depends on the dimension, the fractional exponent $s$ and $\lambda, \Lambda$ in \eqref{eq:coercivity}-\eqref{eq:cancellation2}. We use the notation $a \lesssim b$ if there exists a universal constant $C$ such that $a \leq Cb$, and $a \approx b$ if $a \lesssim b$ and $b \lesssim a$. Moreover, we say that $a \lesssim_d b$ if $a \leq C b$ where $C = C(d)$. For a real number $a$ we denote $a_+ = \sup(a, 0) \geq0$ and $a_- = \inf(a, 0) \leq 0$. 

For a given domain $\Omega \subset \R^d$ we denote with $\dot H^s(\Omega)$ the space that is equipped with the norm
\beqs
	\norm{f}^2_{\dot H^s(\Omega)} := \int_\Omega\int_\Omega \frac{\abs{f(v)-f(w)}^2}{\abs{v-w}^{d+2s}}\dd w \dd v.
\eeqs
The space $H^s(\Omega)$ is correspondingly equipped with the norm
\beqs
	\norm{f}^2_{H^s(\Omega)} := \norm{f}^2_{\dot H^s(\Omega)} + \norm{f}^2_{L^2(\Omega)}.
\eeqs
The space $H_0^s(\Omega)$ is defined as the closure of the space of smooth functions in $\R^d$ with compact support contained in $\Omega$, where the closure is taken with respect to the $H^s(\Omega)$ norm. We denote the dual of $H_0^s(\Omega)$ with $H^{-s}(\Omega)$.

Finally the transport operator is denoted by
\beqs
	\mathcal T := \partial_t + v \cdot \nabla_x. 
\eeqs

\subsection{Bilinear Form}
We introduce the bilinear form associated to the operator $\mathcal L$ in \eqref{eq:1.2} for $(t, x) \in \R^{1+d}$
\bal
	\mathcal E\big(\varphi, g\big)(t, x) &:= - \int \big(\mathcal L \varphi\big)(t, x, v)g(t, x, v) \dd v \\
	&= \lim_{\varepsilon \to 0} \int \int_{\abs{v - w} > \varepsilon}  K(t, x, v, w) \big[\varphi(t, x, v) - \varphi(t, x, w)\big]g(t, x, v)\dd w \dd v.
\label{eq}
\eal
In the sequel, we will abuse notation by ignoring the limit as $\varepsilon \to 0$ and understanding some integrals in the principal value sense. Moreover, even though $K$ can depend on time and space in general, we will subsequently often omit to write out this dependency explicitly. The following theorem has first been proved in \cite[Corollary 5.2]{IS}. We give a different proof for the anti-symmetric part of the operator.
\begin{theorem}
Let $\bar R > 0$, $\Lambda > 0$ and  $K: \R^d \times \R^d \to \R^d$ be an non-negative kernel satisfying \eqref{eq:upperbound-2}, \eqref{eq:cancellation1} and \eqref{eq:cancellation2}. Then for $f, g\in H^s(\R^d)$ supported in $B_{\frac{\bar R}{2}}$ there holds
\beqs
	\mathcal E(f, g) \leq C \norm{f}_{H^s(\R^d)}\norm{g}_{H^s(\R^d)},
\eeqs
where $C$ depends on $s, d$ and $\Lambda$. 
\label{thm:4.1}
\end{theorem}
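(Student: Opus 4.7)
The natural strategy is to decompose $K$ into its symmetric and antisymmetric parts---setting $K_s(v,w):=\tfrac12(K(v,w)+K(w,v))$ and $K_a(v,w):=\tfrac12(K(v,w)-K(w,v))$, so that $\mathcal E=\mathcal E^s+\mathcal E^a$---and to treat each separately. For $\mathcal E^s$, a standard swap $v\leftrightarrow w$ yields the symmetrization $\mathcal E^s(f,g)=\tfrac12\iint K_s(v,w)(f(v)-f(w))(g(v)-g(w))\dd w\dd v$, and after Cauchy--Schwarz the task reduces to proving $\iint K_s(v,w)(f(v)-f(w))^2\dd w\dd v\lesssim\|f\|_{H^s(\R^d)}^2$ (and similarly for $g$). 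I would obtain this by a dyadic decomposition in $|v-w|$: on each annulus $A_k(v):=\{2^{-k-1}<|v-w|\leq 2^{-k}\}$ the integral bound \eqref{eq:upperbound} gives $\int_{A_k(v)}K(v,w)\dd w\leq\Lambda\,2^{2sk}$, which is exactly the scale needed for the dyadic sum to reproduce the Gagliardo seminorm, while the tail $|v-w|>1$ is handled by $\|f\|_{L^2}^2$ using $\int_{|v-w|>1}K_s(v,w)\dd w\leq\Lambda$.

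For $\mathcal E^a$, the analogous swap (using $K_a(w,v)=-K_a(v,w)$) produces $\mathcal E^a(f,g)=\tfrac12\iint K_a(v,w)(f(v)-f(w))(g(v)+g(w))\dd w\dd v$. Writing $g(v)+g(w)=2g(v)-(g(v)-g(w))$ splits $\mathcal E^a=I_1-\tfrac12 I_2$, where $I_2:=\iint K_a(v,w)(f(v)-f(w))(g(v)-g(w))\dd w\dd v$ is a double-difference form bounded by $\|f\|_{H^s}\|g\|_{H^s}$ via the same Cauchy--Schwarz argument as above (using $|K_a|\leq K_s$). The genuinely new piece is $I_1:=\iint K_a(v,w)(f(v)-f(w))g(v)\dd w\dd v$. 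I would split it at the scale $|v-w|=1$: in the far region the bound $\int_{|v-w|>1}|K_a|\dd w\leq\Lambda$, combined with \eqref{eq:cancellation1} and Cauchy--Schwarz, gives control by $\|f\|_{L^2}\|g\|_{L^2}$; in the near region the plan is to pair the odd factor $(v-w)$ implicit in $f(v)-f(w)$ against the first-moment cancellation $\bigl\lvert\mathrm{PV}\int_{B_1(v)}(v-w)K_a(v,w)\dd w\bigr\rvert\lesssim 1$ supplied by \eqref{eq:cancellation2}, then perform a further swap to replace $g(v)$ by $\tfrac12(g(v)-g(w))$ modulo already-controlled symmetric remainders.

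The main obstacle is precisely this near-diagonal estimate of $I_1$ when $s\geq\tfrac12$: \eqref{eq:cancellation2} is a first-order cancellation that naively pairs with a gradient, whereas $f\in H^s$ carries only fractional smoothness. The trick I would use is to sidestep pointwise differentiability by recasting the first-order cancellation as a second-order Taylor-type remainder whose integrability is supplied precisely by the $|v-w|^2$-weighted upper bound \eqref{eq:upperbound-2}; combined with the second swap converting $g(v)$ into a fractional increment of $g$, the resulting bilinear form becomes amenable to Cauchy--Schwarz and is bounded by $\|f\|_{H^s}\|g\|_{H^s}$ without ever invoking a pointwise derivative of $f$ or $g$.
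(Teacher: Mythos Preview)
Your treatment of the symmetric part $\mathcal E^s$ is essentially correct in spirit, though note that the annulus bound $\int_{A_k(v)}K(v,w)\dd w\leq\Lambda 2^{2sk}$ does \emph{not} by itself let you pull out $(f(v)-f(w))^2$ and sum; one needs an intermediate-point averaging (as the paper carries out explicitly) or an equivalent device to compensate for the lack of a pointwise bound on $K$. This is standard, so I won't dwell on it.

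The genuine gap is in the antisymmetric part. First, observe that your $I_2=\iint K_a(v,w)(f(v)-f(w))(g(v)-g(w))\dd w\dd v$ vanishes identically: swapping $v\leftrightarrow w$ sends $K_a\to -K_a$ while the double difference is symmetric, so $I_2=-I_2=0$. Hence your decomposition $\mathcal E^a=I_1-\tfrac12 I_2$ is just $\mathcal E^a=I_1$, and everything rests on $I_1$. Second, the ``second swap converting $g(v)$ into a fractional increment of $g$'' is circular for the same reason: swapping $v\leftrightarrow w$ in $I_1$ gives back $I_1$, because $K_a$ and $f(v)-f(w)$ each pick up a sign. So no swap will manufacture a $g(v)-g(w)$ factor here.

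What remains is the near-diagonal piece of $I_1$ for $s\ge\tfrac12$, and your description of the fix is too vague to close it. The difficulty is real: the cancellation \eqref{eq:cancellation2} is a \emph{first-moment} condition, whereas $f(v)-f(w)$ carries only $H^s$ regularity, so there is no algebraic way to pair them directly and land in $H^s\times H^s$ by Cauchy--Schwarz alone. The paper's route is to Taylor-expand $g$ (not $f$) to second order---this is where \eqref{eq:upperbound-2} enters, to control the $|v-w|^2$ remainder---producing intermediate bilinear bounds of the form $\|f\|_{L^2}\|g\|_{L^2}^{1-s}\|g\|_{H^2}^{s}$ and $\|f\|_{H^s}\|g\|_{L^2}^{1-s/2}\|g\|_{H^2}^{s/2}$ after optimizing a cutoff radius via Gagliardo--Nirenberg. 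These are the wrong norms on $g$, and the paper then passes to a Littlewood--Paley decomposition $f=\sum_i\Delta_i f$, $g=\sum_j\Delta_j g$, uses $\|\Delta_j g\|_{L^2}^{1-s}\|\Delta_j g\|_{H^2}^{s}\approx\|\Delta_j g\|_{H^s}$ on each block, and sums the resulting almost-diagonal system. This frequency-space summation is the missing ingredient in your plan: without it (or an equivalent real-variable substitute), the second-order Taylor remainder only delivers $H^2$-type control on one factor, not the claimed $H^s\times H^s$ bound.
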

\begin{proof}
By density of $C^\infty$ in Sobolev spaces it suffices to consider smooth $f, g$. We divide the proof into the symmetric and anti-symmetric part of the kernel: we write
\bals
	\mathcal E(f, g) &=\frac{1}{2}  \underbrace{\int_{B_{\bar R}}  \int_{\R^d} \big(f(v) - f(w)\big)\big(g(v) - g(w)\big)K(v, w)\dd w\dd v}_{=: I_1(f, g)} \\
	&\quad +\frac{1}{2}\underbrace{\int_{B_{\bar R}} \int_{\R^d} \big(f(v) - f(w)\big)\big(g(v) + g(w)\big)K(v, w)\dd w\dd v}_{=: I_2(f, g)},
\eals
where the integrals are understood in a principal value sense.

For $I_1$ we use Young's inequality and we dyadically decompose the domain of integration which gives for $h = f$ or $h  = g$
\beq
	 I_1(h,  h) =\sum_{k=-\infty}^{+\infty} P(2^k),
\label{eq:4.5is}
\eeq
where for $r > 0$
\beqs
	P(r) := \int_{\Sigma_r} \Abs{h(v)- h(w)}^2K(v, w)\dd w\dd v,
\eeqs
with $\Sigma_r = \big\{(v, w) \in B_{\bar R}\times \R^d : r \leq \abs{v-w}<2r\big\}$.
Let $m = \frac{v+w}{2}$ and consider an intermediate point $u \in B_{\frac{r}{4}}(m)$. We use $\abs{h(v)- h(w)}^2\leq 2\abs{h(v)-h(u)}^2 + 2\abs{h(u)-h(w)}^2$ to bound
\bals
	P(r) &\lesssim \frac{1}{r^d}\int_{\Sigma_r}\int_{B_{\frac{r}{4}}(m)}K(v, w)\Big(\abs{h(v)-h(u)}^2 + \abs{h(u)-h(w)}^2\Big)\dd u\dd w\dd v\\
	&\leq \frac{1}{r^d}\int_{\tilde{\Sigma}_r}\abs{h(v)-h(u)}^2\Bigg(\int_{\Omega_{v, u}} K(v, w)\dd w\Bigg)\dd u\dd v \\
	&\qquad+ \frac{1}{r^d}\int_{\tilde{\Sigma}_r}\abs{h(u)-h(w)}^2\Bigg(\int_{\Omega_{w, u}} K(v, w)\dd v\Bigg)\dd u\dd w,
\eals
where we used Fubini's theorem and write $\tilde{\Sigma}_r := \big\{(v, w)\in B_{\bar R}\times\R^d: \frac{r}{4}\leq \abs{v-w}<\frac{5r}{4}\big\}$ and $\Omega_{v,w}$ for the set containing the $u$ corresponding to any pair $(v, w)$. We note that $\Omega_{v,w} \subset B_{2r}(v)\setminus B_r(v)$. Using \eqref{eq:upperbound} we get
\bals
	P(r) &\lesssim \frac{\Lambda}{r^{d+2s}}\Bigg(\int_{\tilde{\Sigma}_r}\abs{h(v)-h(u)}^2\dd u\dd v+\int_{\tilde{\Sigma}_r}\abs{h(u)-h(w)}^2\dd u\dd w\Bigg)\\
	&\leq \Lambda\Bigg(\int_{\tilde{\Sigma}_r}\frac{\abs{h(v)-h(u)}^2}{\abs{v-u}^{d+2s}}\dd u\dd v+ \int_{\tilde{\Sigma}_r}\frac{\abs{h(u)-h(w)}^2}{\abs{u-w}^{d+2s}}\dd u\dd w\Bigg)\\
	&\lesssim \Lambda \norm{h}^2_{H^s}.
\eals
We can apply this final estimate to each term in the sum \eqref{eq:4.5is} and use Cauchy-Schwarz inequality to obtain
\beq
	I_1(f, g) \lesssim \Lambda \norm{f}_{ H^s(\R^d)}\norm{g}_{ H^s(\R^d)}.
\label{eq:aux1}
\eeq

For $I_2$ we distinguish the far and near part for $0 < R < \frac{\bar R}{2}$ to be determined below
\bal\label{eq:I2-split}
	I_2(f, g) =  \underbrace{\int_{B_{ \bar R}}\int_{B_{\bar R} \cap \abs{v-w} < R} \dots}_{=: I_{21}(f, g)} + \underbrace{\int_{B_{ \bar R}}\int_{\abs{v-w} > R } \dots}_{=: I_{22}(f, g)}.
\eal
We rewrite $I_{21}$ with Fubini's theorem
\bals
	I_{21}(f, g) = &\underbrace{\int_{B_{ \bar R}\times B_{ \bar R} } \big(K(v, w) - K(w, v)\big)f(v) g(v)\chi_{\abs{v-w} < R}\dd w\dd v}_{=: I_{211}(f, g)} \\
	&+\underbrace{\int_{B_{ \bar R}\times B_{\bar R}}\big(K(v, w) - K(w, v)\big)f(v) g(w)\chi_{\abs{v-w} < R}\dd w\dd v}_{=: I_{212}(f, g)}.
\eals
Then for $I_{211}$ we use the cancellation assumption \eqref{eq:cancellation1} and \eqref{eq:cancellation2}. We get
\bal
	I_{211}(f, g) &\lesssim \int_{B_{ \bar R}}\Bigg\vert\textrm{PV} \int_{B_{\bar R}} \big(K(v, w) - K(w, v)\big)\dd w\Bigg\vert \Abs{f(v)g(v)}\dd v\\
	&\lesssim \Lambda \int_{B_{ \bar R}}\Abs{f(v)g(v)}\dd v \\
	&\leq \Lambda \norm{f}_{L^2(B_{ \bar R})}\norm{g}_{L^2(B_{ \bar R})}.
\label{eq:aux2}
\eal
For $I_{212}$ we use Taylor's theorem to write 
\bal\label{eq:I-212}
	I_{212}(f, g) = \int_{B_{ \bar R}}&\int_{B_{ \bar R}} \big[K(v, w) - K(w, v)\big]\chi_{\abs{v-w} < R}f(v)\\
	&\times\Bigg[g(v) + \nabla g(v) \cdot (w-v)\\
	&\qquad + \frac{1}{2}(w-v)^T\Bigg(\int_{0}^1\textrm D^2 g(\tau w + (1-\tau)v)\dd \tau\Bigg)(w - v)\Bigg]\dd w\dd v.
\eal
We can bound the first term as above in \eqref{eq:aux2}. For the second term, in case that $s \in \Big(0, \frac{1}{2}\Big)$, we use \eqref{eq:upperbound-2} so that 
\bals
	\int_{B_{ \bar R}}\Bigg\lvert\int_{B_{ \bar R}} \big[K(v, w) - K(w, v)\big]&\chi_{\abs{v-w} < R}(w-v)\dd w \Bigg\rvert\Abs{ f(v)\nabla g(v)} \dd v \\
	&\lesssim \Lambda R^{1-2s} \int_{B_{ R}} \Abs{f(v)\nabla g(v) }\dd v\\
	&\leq \Lambda R^{1-2s} \norm{f}_{L^2(B_{\bar R})}\norm{\nabla g}_{L^2(B_{\bar R})} \\
	&\leq \Lambda R^{1-2s} \norm{f}_{L^2(B_{\bar R})} \norm{g}_{L^2(B_{\bar R})}^{\frac{1}{2}}\norm{\textrm D^2g}_{L^2(B_{\bar R})}^{\frac{1}{2}}\\
	&\leq \Lambda \norm{f}_{L^2(B_{\bar R})} \norm{g}_{L^2(B_{\bar R})}^{1-s}\norm{g}_{H^2(B_{\bar R})}^s.
\eals
We used the Gagliardo-Nirenberg inequality and chose $R^2 = \frac{\norm{g}_{L^2}}{\norm{\textrm D^2g}_{L^2}}$. 
Else if $s \in \big[1/2, 1)$, we use \eqref{eq:cancellation2}, so that with this choice of $R$ we get
\bals
	\int_{B_{ \bar R}}\Bigg\lvert\int_{B_{\bar R}} \big[K(v, w) - K(w, v)\big]&\chi_{\abs{v-w} < R}(w-v)\dd w \Bigg\rvert\Abs{ f(v)\nabla g(v)} \dd v \\
	&\lesssim \Lambda \big(1+R^{1-2s} \big)\int_{B_{ \bar R}} \Abs{f(v)\nabla g(v) }\dd v\\
	&\leq \Lambda \big(1+R^{1-2s} \big)\norm{f}_{L^2(B_{\bar R})}\norm{\nabla g}_{L^2(B_{\bar R})} \\
	&\leq \Lambda \big(1+R^{1-2s} \big)\norm{f}_{L^2(B_{\bar R})} \norm{g}_{L^2(B_{\bar R})}^{\frac{1}{2}}\norm{\textrm D^2g}_{L^2(B_{\bar R})}^{\frac{1}{2}}\\
	&\leq \Lambda \norm{f}_{L^2(B_{\bar R})}\Big(\norm{g}_{L^2(B_{\bar R})} + \norm{g}_{L^2(B_{\bar R})}^{1-s}\norm{g}_{H^2(B_{\bar R})}^s\Big).
\eals
For the last term in \eqref{eq:I-212}, we have with the Cauchy-Schwarz inequality and \eqref{eq:upperbound-2}
\bals
	&\int_{B_{ \bar R}}\int_{B_{ \bar R}} \big[K(v, w) - K(w, v)\big]\chi_{\abs{v-w} < R}f(v)(w-v)^T \Bigg(\int_{0}^{1}\textrm D^2 g(\tau w + (1-\tau)v)\dd \tau\Bigg)(w - v)\dd w\dd v\\
	&\leq \norm{f}_{L^2(B_{ \bar R})}\\
	&\qquad \times\Bigg\{\int_{B_{ \bar R}}\Bigg\vert\int_{B_{ \bar R}} \big[K(v, w) - K(w, v)\big]\chi_{\abs{v-w} < R}\abs{w-v}^2\int_{0}^{1}\textrm D^2 g(\tau w + (1-\tau)v)\dd \tau\dd w\Bigg\vert^2\dd v\Bigg\}^{\frac{1}{2}}\\
	&\leq \norm{f}_{L^2(B_{ \bar R})}\Bigg\{\int_{B_{ \bar R}}\Bigg\vert\int_{B_{ \bar R}} \Abs{K(v, w) - K(w, v)}\chi_{\abs{v-w} < R}\abs{w-v}^2\dd w\Bigg\vert\\
	&\qquad \times\Bigg\vert \int_{B_{\bar R}} \Abs{K(v, w) - K(w, v)}\chi_{\abs{v-w} < R}\abs{w-v}^2\Bigg\vert\int_{0}^{1}\textrm D^2 g(\tau w + (1-\tau)v)\dd \tau\Bigg\vert^2\dd w\Bigg\vert\dd v\Bigg\}^{\frac{1}{2}}\\
	&\leq \Lambda R^{1-s}\norm{f}_{L^2(B_{ \bar R})}\\
	&\qquad \times\Bigg\{\int_{B_{ \bar R}}\int_{B_{ \bar R}} \Abs{K(v, w) - K(w, v)}\chi_{\abs{v-w} < R}\abs{w-v}^2\int_{0}^{1} \Abs{\textrm D^2 g(\tau w + (1-\tau)v)}^2\dd \tau\dd w\dd v\Bigg\}^{\frac{1}{2}}.
\eals
For $\tau \in [0, \frac{1}{2})$ we perform the change of variable $v \to \tau w +(1-\tau)v$, whereas for $\tau \in [\frac{1}{2},1]$ we perform the change of variable $w \to \tau w +(1-\tau)v$. We get in the former case with \eqref{eq:upperbound-2} and Fubini's theorem
\bals
	\int_{B_{ \bar R}}\int_{B_{ \bar R}} &\Abs{K(v, w) - K(w, v)}\chi_{\abs{v-w} < R}\abs{w-v}^2\int_{0}^{\frac{1}{2}} \abs{\textrm D^2 g(\tau w + (1-\tau)v)}^2\dd \tau\dd w\dd v \\
	&\leq\int_{0}^{\frac{1}{2}}\int_{B_{\bar R}}\int_{B_{ \tilde R}} \Abs{K(\tilde v, w) - K(w, \tilde v)}\chi_{\abs{\tilde v-w} < R}\abs{w-\tilde v}^2 \abs{\textrm D^2 g(\tilde v)}^2 (1-\tau)^{-3}\dd \tilde v\dd w\dd \tau\\
	&\leq C\Lambda R^{2-2s}\norm{\textrm{D}^2 g}_{L^2(B_{\tilde R})},
\eals
where $0 < \tilde R$ is chosen such that $B_{(1-\tau)\bar R}(\tau w) \subset B_{\tilde R}$ for all $w \in B_{\bar R}$ and $\tau \in [0, \frac{1}{2})$. In the latter case we have similarly
\bals
	\int_{B_{ \bar R}}\int_{B_{ \bar R}} &\Abs{K(v, w) - K(w, v)}\chi_{\abs{v-w} < R}\abs{w-v}^2\int_{\frac{1}{2}}^1 \abs{\textrm D^2 g(\tau w + (1-\tau)v)}^2\dd \tau\dd w\dd v \\
	&\leq\int_{\frac{1}{2}}^1\int_{B_{\bar R}}\int_{B_{ \tilde R}} \Abs{K(v, \tilde w) - K(\tilde w, v)}\chi_{\abs{v-\tilde w} < R}\abs{\tilde w- v}^2 \abs{\textrm D^2 g(\tilde w)}^2\tau^{-3}\dd \tilde w\dd v\dd \tau\\
	&\leq C\Lambda R^{2-2s}\norm{\textrm{D}^2 g}_{L^2(B_{\tilde R})},
\eals
when we choose $0 < \tilde R$ such that $B_{\tau \bar R}((1-\tau) v) \subset B_{\tilde R}$ for all $v \in B_{\bar R}$. Thus we get overall
\bals
	\int_{B_{ \bar R}}\int_{B_{ \bar R}} \big[K(v, w) - K(w, v)\big]\chi_{\abs{v-w} < R}&f(v)(w-v)^T\int_{0}^1\textrm D^2 g(\tau w + (1-\tau)v)\dd \tau(w - v)\dd w\dd v\\
	&\leq C\Lambda^2 R^{2-2s}\norm{f}_{L^2(B_{\bar R})}\norm{\textrm{D}^2 g}_{L^2(B_{\tilde R})}\\
	&\leq C\Lambda\norm{f}_{L^2(B_{\bar R} )} \norm{g}_{L^2(B_{\bar R})}^{1-s}\norm{g}_{H^2(B_{\bar R})}^s.
\eals
Therefore we have obtained 
\bal
	I_{21}(f, g) \lesssim_\Lambda  \norm{f}_{L^2(B_{ \bar R})}\norm{g}_{L^2(B_{\bar  R})} +  \norm{f}_{L^2(B_{\bar R})} \norm{g}_{L^2(B_{\bar R})}^{1-s}\norm{g}_{H^2(B_{\bar R})}^s.
\label{eq:aux3}
\eal

For the far part $I_{22}$ in \eqref{eq:I2-split} we apply Taylor's theorem 
\bals
	I_{22}(f, g) &= \int_{B_{\bar R}}\int_{\R^d\setminus B_R(v)} \big[f(v) - f(w)\big]\big[g(v)+g(w)\big] K(v, w)\dd w\dd v \\
	&= \int_{B_{\bar R}}\int_{\R^d\setminus B_R(v)} K(v, w)\big[f(v) - f(w)\big]\Bigg[2g(v) + \nabla g(v)\cdot (w-v)\\
	&\qquad\qquad\qquad\qquad \qquad\qquad+\frac{1}{2}(w-v)^T\cdot \Bigg(\int_0^1\textrm D^2 g(\tau w + (1-\tau)v)\dd \tau\Bigg) \cdot (w-v)\Bigg] \dd w\dd v.
\eals
The trickiest term to estimate is the last one. We get with Fubini and Cauchy-Schwarz
\bals
	&\int_{B_{\bar R}}\int_{\R^d\setminus B_R(v)}K(v, w) \big[f(v) - f(w)\big] \abs{w-v}^2 \int_0^1\textrm D^2 g(\tau w + (1-\tau) v) \dd \tau \dd w\dd v\\
	&\leq \norm{f}_{L^2(B_{\bar R})}\Bigg\{\int_{B_{\bar R}}\Bigg\vert\int_{\R^d\setminus B_R(v)}K(v, w)  \abs{w-v}^2 \int_0^1\textrm D^2 g(\tau w + (1-\tau) v) \dd \tau \dd w\Bigg\vert^2\dd v\Bigg\}^{\frac{1}{2}}\\
	&\quad + \norm{f}_{L^2(\R^d \setminus B_{\bar R})}\Bigg\{\int_{\R^d}\Bigg\vert\int_{B_{\bar R}\setminus B_R(w)}K(v, w)  \abs{w-v}^2 \int_0^1\textrm D^2 g(\tau w + (1-\tau) v) \dd \tau \dd v\Bigg\vert^2\dd w\Bigg\}^{\frac{1}{2}}\\
	&\leq \Lambda R^{1-s}\norm{f}_{L^2(B_{\bar R})}\Bigg\{\int_{B_{\bar R}}\int_{\R^d\setminus B_R(v)}K(v, w)  \abs{w-v}^2 \int_0^1 \Abs{\textrm D^2 g(\tau w + (1-\tau) v)}^2 \dd \tau \dd w\dd v\Bigg\}^{\frac{1}{2}}\\
	&\quad +  \Lambda R^{1-s}\norm{f}_{L^2(\R^d \setminus B_{\bar R})}\Bigg\{\int_{B_{\bar R}}\int_{\R^d\setminus B_R(v)}K(v, w)  \Abs{w-v}^2 \int_0^1 \abs{\textrm D^2 g(\tau w + (1-\tau) v)}^2 \dd \tau \dd w\dd v\Bigg\}^{\frac{1}{2}}.
\eals
With the same change of variables as above we find for $\tilde R$ sufficiently large such that $B_{(1-\tau)\bar R}(\tau w) \subset B_{\tilde R}$ for all $w \in \R^d \setminus B_R(v)$ and $\tau \in [0, \frac{1}{2}]$
\bals
	\int_{B_{\bar R}}\int_{\R^d\setminus B_R(v)}K(v, w) \abs{w-v}^2 &\int_0^{\frac{1}{2}} \Abs{\textrm D^2 g(\tau w + (1-\tau) v)}^2 \dd \tau \dd w\dd v\\
	&\leq C\int_{B_{\tilde R}}\int_{\R^d\setminus B_R(\tilde v)}K(\tilde v, w) \abs{w - \tilde v}^2 \abs{\textrm D^2 g(\tilde v)}^2 \dd w\dd \tilde v   \\
	&\leq C\Lambda R^{2-2s} \norm{\textrm D^2 g}^2_{L^2(\R^d)}.
\eals
Similarly
\bals
	\int_{B_{\bar R}}\int_{\R^d\setminus B_R(v)}K(v, w) \abs{w-v}^2 &\int_{\frac{1}{2}}^1 \abs{\textrm D^2 g(\tau w + (1-\tau) v)}^2 \dd \tau \dd w\dd v\\
	&\leq C\int_{B_{\tilde R  }}\int_{\R^d \setminus B_{R}(v)} K(v,\tilde w) \abs{\tilde w - v}^2 \abs{\textrm D^2 g(\tilde w)}^2\dd \tilde w \dd v \\
	&\leq C\Lambda R^{2-2s} \norm{\textrm D^2 g}_{L^2(\R^d)}^2.
\eals
Thus with our choice of $R$ we get
\bal
	\int_{B_{\bar R}}\int_{\R^d\setminus B_R(v)}K(v, w) &\big[f(v) - f(w)\big] \abs{w-v}^2 \int_0^1\textrm D^2 g(\tau w + (1-\tau) v) \dd \tau \dd w\dd v\\
	&\leq C\Lambda R^{2-2s}\norm{f}_{L^2(\R^d)}\norm{\textrm D^2 g}_{L^2(\R^d)}\\
	&\leq C\Lambda\norm{f}_{L^2(\R^d)} \norm{g}_{L^2(\R^d)}^{1-s}\norm{g}_{H^2(\R^d)}^s.
\label{eq:auxint}
\eal
Overall we find
\bals
	I_{22}(f, g) &\lesssim \int_{B_{\bar R}}\Bigg(\int_{\R^d\setminus B_R(v)} \big(f(v) - f(w)\big)^2K(v, w)\dd w\Bigg)^{\frac{1}{2}} \Bigg(\int_{\R^d\setminus B_R(v)}K(v, w)\dd w\Bigg)^{\frac{1}{2}}g(v)\dd v\\
	&\quad + \int_{B_{\bar R}}\Bigg(\int_{\R^d\setminus B_R(v)} \big(f(v) - f(w)\big)^2K(v, w)\dd w\Bigg)^{\frac{1}{2}}\\
	&\qquad \qquad \qquad \times \Bigg(\int_{\R^d\setminus B_R(v)}\abs{w-v}^2K(v, w)\dd w\Bigg)^{\frac{1}{2}}\abs{\nabla g(v)}\dd v\\
	&\quad+C\Lambda R^{2-2s}\norm{f}_{L^2(\R^d)}\norm{\textrm D^2 g}_{L^2(\R^d)}\\
	&\lesssim \Lambda R^{-s}\norm{f}_{H^s}\norm{g}_{L^2} + \Lambda R^{1-s}\norm{f}_{H^s}\norm{\nabla g}_{L^2}+ \Lambda R^{2-2s}\norm{f}_{L^2(\R^d)}\norm{\textrm D^2 g}_{L^2(\R^d)}\\
	&\leq  \Lambda R^{-s}\norm{f}_{H^s}\norm{g}_{L^2} + \Lambda R^{1-s}\norm{f}_{H^s}\norm{g}_{L^2}^{\frac{1}{2}}\norm{\textrm D^2g}_{L^2}^{\frac{1}{2}}+ \Lambda R^{2-2s}\norm{f}_{L^2(\R^d)}\norm{\textrm D^2 g}_{L^2(\R^d)}\\
	&\leq  \Lambda \norm{f}_{H^s}\norm{g}_{L^2}^{1-\frac{s}{2}} \norm{g}_{H^2}^{\frac{s}{2}} + \Lambda\norm{f}_{L^2(\R^d)} \norm{g}_{L^2(\R^d)}^{1-s}\norm{g}_{H^2(\R^d)}^s.
\eals
We used the same proof as for the symmetric part to deduce the third inequality, repeatedly the Cauchy Schwarz inequality, the upper bound \eqref{eq:upperbound-2}, the Gagliardo-Nirenberg inequality and the choice of $R$ as above.

Now we use Littlewood-Paley theory inspired from the proof of Theorem 4.1 in \cite{IS}. We denote with $\Delta_i$ the Littlewood-Paley projectors defined by $\Delta_i f(z) = \int_{\xi: 2^i < \abs\xi < 2^{i+1}} \hat f(\xi) e^{2\pi i \xi \cdot z} \dd \xi$ where $\hat f$ is the Fourier transform of $f$. We decompose $f = \sum_{i=0}^\infty \Delta_i f$, using the convention that all low modes are contained in $\Delta_0$ so that the index $i \geq 0$. Note that for $s \geq 0$
\beqs
	\norm{\Delta_i f}_{H^s} \approx 2^{is}\norm{\Delta_i f}_{L^2}.
\eeqs
Moreover, we bound as in \cite{IS} (for a justification see \cite{bahouri})
\beqs
	\norm{\Delta_i g}_{L^2}^{1-s}\norm{\Delta_i g}_{H^2}^s \lesssim 2^{si} \norm{\Delta_i g}_{H^s}.
\eeqs
and
\beqs
	\norm{\Delta_i g}_{L^2}^{1-\frac{s}{2}}\norm{\Delta_i g}_{H^2}^{\frac{s}{2}} \lesssim \norm{\Delta_i g}_{H^s}.
\eeqs
Then
\bal
	I_{2}(f, g) &= \sum_{i, j} I_{2}(\Delta_i f, \Delta_j g) \\
	&\lesssim \sum_{j \leq i} \norm{\Delta_i f}_{L^2} \norm{\Delta_j g}_{L^2}^{1-s}\norm{\Delta_j g}_{H^2}^s + \sum_{i < j} \norm{\Delta_j g}_{L^2} \norm{\Delta_i f}_{L^2}^{1-s}\norm{\Delta_i f}_{H^2}^s\\
	&\quad + \sum_{i, j}\Big( \norm{\Delta_i f}_{H^s}\norm{\Delta_j g}^{1-\frac{s}{2}}_{L^2} \norm{\Delta_j g}^{\frac{s}{2}}_{H^2} + \norm{\Delta_i f}_{L^2}\norm{\Delta_j g}_{L^2} \Big)\\
	&\lesssim \sum_{i, j} \Big(2^{-s\abs{i-j}}\norm{\Delta_if}_{H^s}\norm{\Delta_jg}_{H^s} +\norm{\Delta_i f}_{H^s}\norm{\Delta_j g}_{H^s}\Big) + \norm{f}_{L^2}\norm{g}_{L^2}\\
	&\leq \sum_{k = 0}^\infty 2^{-sk}\Bigg( \sum_{i = 0}^\infty \norm{\Delta_if}_{H^s}\norm{\Delta_{i+k}g}_{H^s} + \norm{\Delta_{i+k}f}_{H^s}\norm{\Delta_{i}g}_{H^s} \Bigg)\\
	&\quad+ \norm{f}_{H^s}\norm{g}_{H^s}  + \norm{f}_{L^2}\norm{g}_{L^2}\\
	&\leq \sum_{k = 0}^\infty 2^{1-sk}\Bigg( \sum_{i = 0}^\infty \norm{\Delta_if}^2_{H^s}\Bigg)^{\frac{1}{2}}\Bigg( \sum_{i = 0}^\infty \norm{\Delta_ig}^2_{H^s}\Bigg)^{\frac{1}{2}}\\
	&\quad + \norm{f}_{H^s}\norm{g}_{H^s}+ \norm{f}_{L^2}\norm{g}_{L^2}\\
	&\lesssim \norm{f}_{H^s}\norm{g}_{H^s}.
\label{eq:aux3.1}
\eal
Note that splitting the sum  into $j \leq i$ and $i < j$ works since the adjoints of the corresponding integral operators satisfy the same bounds as in \eqref{eq:aux3} and \eqref{eq:auxint} due to the cancellation assumptions. Thus we conclude with \eqref{eq:aux1} and \eqref{eq:aux3.1}.
\end{proof}

To motivate the following definition of weak solutions, we recall Lemma 5.6 of \cite{IS}, which states that the operator $\mathcal L$ is bounded from the space $L^\infty\big(\R^d\setminus B_{\bar R/2}\big)+H^s(\R^d)$ to $H^{-s}(B_{\bar R})$, where the former space is equipped with the norm 
\beqs
	\norm{f}_{L^\infty(\R^d\setminus B_{\bar R/2})+H^s(\R^d)} = \inf\Big\{\norm{f_1}_{L^\infty(\R^d\setminus B_{\bar R/2})} + \norm{f_2}_{H^s(\R^d)} : f = f_1 + f_2, ~ f_1 = 0 \textrm{ in } B_{\bar R/2}\Big\}.
\eeqs 
\begin{lemma}
Let $\varphi \in H^s(\R^d)$ with $\textrm{supp } \varphi$ compactly contained in $B_{\bar R/2}$. Let $K$ be a kernel with \eqref{eq:upperbound-2}, \eqref{eq:cancellation1}, \eqref{eq:cancellation2}. Then for $f \in L^\infty(\R^d\setminus B_{\bar R/2})+H^s(\R^d)$ there exists $C$ depending on $s, d, \Lambda$ and $\textrm{supp }\varphi$ so that
\beqs
	\mathcal E(f, \varphi)\leq C\norm{f}_{L^\infty(\R^d\setminus B_{\bar R/2})+H^s(\R^d)}\norm{\varphi}_{H^s(\R^d)}.
\eeqs
\end{lemma}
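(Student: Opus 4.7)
The plan is to split the argument of $\mathcal E(\cdot,\varphi)$ into an inner $H^s$--piece, which will be handled by Theorem \ref{thm:4.1}, and an outer piece controlled purely by the tail of $K$. The geometric fact driving everything is that $\mathrm{supp}\,\varphi$ is compactly contained in $B_{\bar R/2}$, hence separated from $\R^d\setminus B_{\bar R/2}$ by a strictly positive distance $\delta := \mathrm{dist}(\mathrm{supp}\,\varphi,\R^d\setminus B_{\bar R/2})$. I would start by fixing any decomposition $f=f_1+f_2$ with $f_1\in L^\infty(\R^d\setminus B_{\bar R/2})$, $f_1\equiv 0$ on $B_{\bar R/2}$, and $f_2\in H^s(\R^d)$, and aim to prove
\[
|\mathcal E(f,\varphi)|\leq C\bigl(\|f_1\|_{L^\infty(\R^d\setminus B_{\bar R/2})}+\|f_2\|_{H^s(\R^d)}\bigr)\|\varphi\|_{H^s(\R^d)}
\]
with $C = C(s,d,\Lambda,\mathrm{supp}\,\varphi)$, after which passing to the infimum over such decompositions yields the claim.

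Next I would choose a smooth cut-off $\chi$ with $\chi\equiv 1$ on $\mathrm{supp}\,\varphi$ and $\mathrm{supp}\,\chi\Subset B_{\bar R/2}$, and split $f = \chi f_2 + g$ with $g := (1-\chi)f_2 + f_1$. The inner piece $\chi f_2$ is compactly supported in $B_{\bar R/2}$ and belongs to $H^s(\R^d)$ with $\|\chi f_2\|_{H^s}\lesssim \|f_2\|_{H^s}$, so Theorem \ref{thm:4.1} applies verbatim and yields $|\mathcal E(\chi f_2,\varphi)|\lesssim \|f_2\|_{H^s}\|\varphi\|_{H^s}$. The outer piece $g$ vanishes on $\mathrm{supp}\,\varphi$, so the diagonal $g(v)\varphi(v)$ term drops and
\[
\mathcal E(g,\varphi) = -\int_{v\in\mathrm{supp}\,\varphi}\int_{|v-w|\geq\delta} K(v,w)\,g(w)\,\varphi(v)\,\dd w\,\dd v.
\]
Dyadic summation of the annular bound \eqref{eq:upperbound} gives the tail estimate $\int_{|v-w|\geq r}K(v,w)\,\dd w\lesssim \Lambda r^{-2s}$ for $v\in B_{\bar R}$. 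For the $L^\infty$ component $f_1$ I would pull out $\|f_1\|_{L^\infty}$ and apply this tail bound in $w$ followed by Cauchy--Schwarz in $v$, using compactness of $\mathrm{supp}\,\varphi$ to absorb $\|\varphi\|_{L^1(\mathrm{supp}\,\varphi)}$ into $\|\varphi\|_{H^s(\R^d)}$.

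The subtler $H^s$ piece $(1-\chi)f_2$ is what I expect to be the main obstacle. Cauchy--Schwarz with respect to the measure $K(v,w)\,\dd w$ on the inner integral, followed by Cauchy--Schwarz in $v$ and Fubini, reduces matters to bounding
\[
\int|f_2(w)|^2\int_{v\in\mathrm{supp}\,\varphi,\,|v-w|\geq\delta} K(v,w)\,\dd v\,\dd w\;\lesssim\;\|f_2\|_{L^2(\R^d)}^2.
\]
For $w\in B_{\bar R}$ the inner integral is uniformly bounded by the second form of \eqref{eq:upperbound}. For $w\notin B_{\bar R}$ the hypotheses on $K$ do not directly constrain $v$-integration, and here one must exploit the compactness of $\mathrm{supp}\,\varphi$: since this set is fixed and bounded, its contribution to the $v$-integration spans only finitely many dyadic shells about any fixed $w$, each of which is then transferred to an already-controlled $w$-integrable bound via the cancellation assumption \eqref{eq:cancellation1} relating $K(v,w)$ and $K(w,v)$, combined with \eqref{eq:upperbound-2}. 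Combining the three estimates and passing to the infimum over decompositions $f=f_1+f_2$ completes the proof.
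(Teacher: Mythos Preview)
Your overall strategy matches the paper's: decompose $f=f_1+f_2$, use Theorem~\ref{thm:4.1} on the $H^s$ piece and the tail bound \eqref{eq:upperbound} on the $L^\infty$ piece, then pass to the infimum. The treatment of $f_1$ is exactly the paper's argument.

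The difference is your extra cut-off $\chi$. The paper simply invokes Theorem~\ref{thm:4.1} on $\mathcal E(f_2,\varphi)$ directly, without localising $f_2$. You are right that Theorem~\ref{thm:4.1} is \emph{stated} for both arguments supported in $B_{\bar R/2}$; the implicit point is that its proof only uses the support of the second argument, since every integral in $\mathcal E(f,\varphi)$ already carries at least one variable in $\mathrm{supp}\,\varphi\subset B_{\bar R}$ and this is all that \eqref{eq:upperbound-2}--\eqref{eq:upperbound} require. That is how the paper closes the $f_2$ estimate in one line.

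Your workaround via $\chi$ creates the residual term $(1-\chi)f_2$, and here your sketch has a genuine gap. After Cauchy--Schwarz and Fubini you need a uniform-in-$w$ bound on $\int_{v\in\mathrm{supp}\,\varphi}K(v,w)\,\dd v$ for $w\notin B_{\bar R}$. None of the hypotheses give this: the second line of \eqref{eq:upperbound} requires $w\in B_{\bar R}$, and \eqref{eq:upperbound-2} controls only $w$-integration for $v\in B_{\bar R}$. Your proposed fix via \eqref{eq:cancellation1} does not work, because that assumption is a principal-value cancellation of the full $w$-integral of the \emph{difference} $K(v,w)-K(w,v)$; it says nothing pointwise, nothing about restricted domains, and nothing about $v$-integration for fixed $w$. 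So you cannot ``transfer'' $\int_v K(v,w)\,\dd v$ to $\int_v K(w,v)\,\dd v$ this way. The cleanest repair is to drop $\chi$ and apply Theorem~\ref{thm:4.1} to $f_2$ directly as the paper does, noting (if you want to be fully explicit) that its proof goes through verbatim when only the second argument is assumed compactly supported in $B_{\bar R/2}$.
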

\begin{proof} 
The proof comes from Lemma 5.6 in \cite{IS}. It suffices to consider smooth $f, \varphi$ just as above. We write $f = f_1 + f_2$ with $f_1$ and $f_2$ as in the definition of the norm for the space $L^\infty(\R^d\setminus B_{\bar R/2})+H^s(\R^d)$. By Theorem \ref{thm:4.1} we have $\Abs{\mathcal E(f_2, \varphi)} \leq C \norm{f_2}_{H^s(\R^d)}\norm{\varphi}_{H^s(\R^d)}$. For $\mathcal E(f_1, \varphi)$ we get
\bals
	\Abs{\mathcal E(f_1, \varphi)} &= \Bigg\vert\lim_{\varepsilon \to 0}\int\int_{\abs{w-v}>\varepsilon} \big[f_1(w)-f_1(v)\big]\varphi(v)K(v, w)\dd w\dd v\Bigg\vert\\
	&= \lim_{\varepsilon \to 0}\int_{\textrm{supp }\varphi}\int_{\R^d\setminus B_\varepsilon(v)} f_1(w)K(v, w)\dd w\varphi(v)\dd v\\
	&= \int_{\textrm{supp }\varphi}\int_{\R^d\setminus B_\delta(v)} f_1(w)K(v, w)\dd w\varphi(v)\dd v,
\eals	
where $\delta = \textrm{dist }\big(\textrm{supp }\varphi, \R^d\setminus B_{\bar R/2}\big)$. Using the upper bound on the kernel \eqref{eq:upperbound-2}, we find
\beqs
	\Abs{\mathcal E(f_1, \varphi)} \leq \Lambda\delta^{-2s}\norm{f_1}_{L^\infty(\R^d\setminus B_{\bar R/2})}\int_{\textrm{supp }\varphi}\varphi(v)\dd v\leq C\norm{f_1}_{L^\infty(\R^d\setminus B_{\bar R/2})}\norm{\varphi}_{H^s}.
\eeqs
\end{proof}
The following definition coincides with \cite[Definition 5.7]{IS}.
\begin{definition}[Weak Solutions]
Assume $0 \leq K$ satisfies \eqref{eq:coercivity}-\eqref{eq:cancellation2} for $\bar R > 0$. We say that $f: \big(-(\frac{\bar R}{2})^{2s}, 0\big) \times B_{(\frac{\bar R}{2})^{1+2s}}\times \R^d \to \R$ is a weak sub-solution of \eqref{eq:1.1}-\eqref{eq:1.2} in $Q_{\frac{\bar R}{2}} =: \mathcal I \times \Omega_x \times \Omega_v$ if 
\begin{enumerate}
	\item $f \in C^0(\mathcal I, L^2(\Omega_x\times \Omega_v))\cap L^2(\mathcal I \times \Omega_x, L^\infty(\R^d\setminus \Omega_v)+H^s\big(\R^d))$,
	\item $\mathcal T f \in L^2(\mathcal I\times \Omega_x, H^{-s}(\Omega_v))$,
	\item for all non-negative $\varphi \in L^2(\mathcal I \times \Omega_x, H^s(\R^d))$ such that for every $t, x$ the support of $\varphi(t, x, \cdot)$ is compactly contained in $\Omega_v$ there holds
	\beqs
		\int_{Q_{\frac{\bar R}{2}}} (\mathcal T f)\varphi\dd z + \int_{\Omega_x}\int_{\mathcal I}\mathcal E(f, \varphi)\dd t\dd x - \int_{Q_{\frac{\bar R}{2}}} h\varphi\dd z \leq 0.
	\eeqs
\end{enumerate}
A function $f$ is a super-solution of \eqref{eq:1.1}-\eqref{eq:1.2} in $Q_{\frac{\bar R}{2}}$ if $-f$ is a sub-solution in $Q_{\frac{\bar R}{2}}$. It is a solution if it is a sub- and super-solution.
\end{definition}

\section{Integral Estimates}
\label{subsec:intest}
\subsection{Kolmogorov's fundamental solutions}
\label{subsec:kolmogorov}
In this subsection, we consider the fractional Kolmogorov equation given by
\beq
	\partial_t f + v \cdot \nabla_x f + (-\Delta)^s f = h - m,
\label{eq:2.1}
\eeq
for some $h \in L^2\big([-\tau, 0]\times \R^{d}, H^{-s}(\R^d)\big)$ and some non-negative measure $m \in M^1\big([-\tau, 0]\times \R^{2d}\big)$ with finite mass.  Then there exists $h_1, h_2 \in L^2\big([-\tau, 0]\times \R^{2d}\big)$ so that $h = h_1 + (-\Delta)^{\frac{s}{2}}_v h_2$ and 
\bals
	\norm{h}_{L^2([-\tau,0]\times \R^{d}, H^{-s}(\R^d))} \approx \norm{h_1}_{L^2([-\tau, 0]\times \R^{2d})}+\norm{h_2}_{L^2([-\tau, 0]\times \R^{2d})}.
\eals
\begin{proposition}\label{prop:2.2}
Let $0 \leq f$ solve \eqref{eq:2.1} in $[-\tau, 0]\times \R^{2d}$ for $h = h_1 + (-\Delta)^{\frac{s}{2}}_v h_2$ with $h_1, h_2\in L^1\cap L^2([-\tau,0]\times\R^{2d})$, $0\leq m \in M^1([-\tau,0]\times \R^{2d})$ and with $f(0, x, v) = f_0(x, v) \in L^1\cap L^2(\R^{2d})$. Then
\beq
	\norm{f}_{L^p([-\tau, 0]\times \R^{2d})} \lesssim \norm{h_1}_{L^2([-\tau, 0]\times \R^{2d})}+\norm{h_2}_{L^2([-\tau, 0]\times \R^{2d})} + \norm{f_0}_{L^2(\R^{2d})},
\label{eq:2.2}
\eeq
where $2 \leq p < 2 + \frac{2s}{d(1+s)}$. 
%where $\frac{1}{p} > \frac{1}{p^*} - \frac{1}{2}$ for $p^* = \frac{2d(1+s) + 2s}{2d(1+s) + s} \in (1, 2)$. 
Moreover, for $\sigma \in \left[0, \frac{s}{1+2s}\right)$
\bal
	\norm{f}_{L^1_{t, v}W_x^{\sigma, 1}([-\tau, 0]\times \R^{2d})} \lesssim &\norm{h_1}_{L^1([-\tau, 0]\times \R^{2d})} + \norm{h_2}_{L^1([-\tau, 0]\times \R^{2d})}\\
	&+ \norm{m}_{M^1([-\tau, 0]\times \R^{2d})}  + \norm{f_0}_{L^1(\R^{2d})}.
\label{eq:2.3}
\eal
\end{proposition}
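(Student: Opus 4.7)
The plan is to represent $f$ via the fundamental solution $J$ of the fractional Kolmogorov operator $\mathcal K := \partial_t + v \cdot \nabla_x + (-\Delta_v)^s$ on $\R^{1+2d}$, and then to extract both \eqref{eq:2.2} and \eqref{eq:2.3} by convolution inequalities. In Fourier variables, the characteristic shift $\xi_v \mapsto \xi_v - t\xi_x$ reduces $\mathcal K$ to an ODE in $t$ and yields
\[
\widehat J(t, \xi_x, \xi_v) = \exp\Bigl(-\int_0^t \abs{\xi_v - \sigma\xi_x}^{2s}\dd\sigma\Bigr),
\]
so that $J(t, \cdot, \cdot)$ is a smooth non-negative probability density with self-similar scaling $J(t, x, v) = t^{-d(1+s)/s}\Phi(x/t^{(1+2s)/(2s)}, v/t^{1/(2s)})$ for a fast-decaying profile $\Phi$. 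After reversing time if necessary so that the Cauchy problem runs forward from the datum $f_0$, Duhamel's formula presents $f$ as the $(t, x, v)$-convolution of $J$ against $f_0 \otimes \delta_{t=0}$ and against $h_1 - m$, plus a convolution of $h_2$ against $(-\Delta_v)^{s/2} J$ after moving the fractional derivative onto the kernel by self-adjointness.

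For \eqref{eq:2.2}, I would apply Young's convolution inequality with exponents $1 + 1/p = 1/q + 1/2$, using the scaling bound $\norm{J(t, \cdot)}_{L^q_{x,v}} \simeq t^{-d(1+s)(1-1/q)/s}$ and an analogous estimate for $(-\Delta_v)^{s/2} J$, which carries an extra factor $t^{-1/2}$ coming from the $v$-scale $t^{1/(2s)}$. Space-time integrability of these kernels on $[-\tau, 0]$ requires $q < 1 + s/(d(1+s))$, translating via Young to the range $p < 2 + 2s/(d(1+s))$; the endpoint $p = 2$ is covered by the standard energy estimate $\norm{f}_{L^2} \lesssim \norm{h}_{L^2 H^{-s}_v} + \norm{f_0}_{L^2}$, and interpolation fills in the intermediate exponents. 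For \eqref{eq:2.3}, I would exploit the hypoelliptic gain of $x$-regularity visible in the symbol: at $\xi_v = 0$ it decays as $\exp(-ct^{1+2s}\abs{\xi_x}^{2s})$, and scaling yields $\norm{(-\Delta_x)^{\sigma/2} J(t, \cdot)}_{L^1_{x,v}} \simeq t^{-\sigma(1+2s)/(2s)}$. Applying Young's inequality against the $L^1$ data, and accounting for the extra loss needed to separate the $L^1_{t,v}W^{\sigma,1}_x$-norm from a pure $L^1$ space-time norm, recovers the claimed range $\sigma < s/(1+2s)$.

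The main obstacle I anticipate lies in the borderline contributions. The kernel $(-\Delta_v)^{s/2} J$ is only barely integrable in time, so extracting the sharp threshold $p < 2 + 2s/(d(1+s))$ near the endpoint is likely to require Hardy--Littlewood--Sobolev in place of Young's inequality, combined with a careful Littlewood--Paley decomposition in $v$. Likewise, the singular measure $m$ prevents transferring any derivative off of $m$, so its $L^1$-entry into Young's inequality drives the allowed exponents. Justifying the integration by parts that moves $(-\Delta_v)^{s/2}$ onto $J$, and handling the merely polynomial (rather than exponential) spatial decay of the profile $\Phi$, are the remaining technical points that would require care.
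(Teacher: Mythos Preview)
Your outline for \eqref{eq:2.2} via Young's inequality and the scaling of $J$ matches the paper's argument, but you miss one essential step: the measure $m$ does not appear on the right-hand side of \eqref{eq:2.2}, yet $m$ lies only in $M^1$, not $L^2$, so Young cannot produce an $L^p$ bound from the $m$-contribution for any $p>1$. The paper eliminates $m$ at the outset by a sign argument: since $f\geq 0$, $J\geq 0$ and $m\geq 0$, the Duhamel representation gives the \emph{pointwise} upper bound
\[
0\leq f \leq J*f_0 + J*h_1 + \big((-\Delta_v)^{s/2}J\big)*h_2,
\]
and it is this right-hand side that is then estimated in $L^p$. Without this positivity trick you cannot close \eqref{eq:2.2}. (Incidentally, the detour through the energy estimate at $p=2$ plus interpolation, and the anticipated HLS/Littlewood--Paley refinement near the top, are both unnecessary: Young alone gives the full open range $2\leq p<2+\frac{2s}{d(1+s)}$ directly, and the statement does not claim the endpoint.)

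For \eqref{eq:2.3} your proposed route---estimate $\|(-\Delta_x)^{\sigma/2}J(t,\cdot)\|_{L^1_{x,v}}\simeq t^{-\sigma(1+2s)/(2s)}$ and convolve against the $L^1$/$M^1$ data---is genuinely different from the paper's. The paper instead splits $J=J_\varepsilon+J_\varepsilon^\perp$ via a smooth time cutoff at scale $\varepsilon$: on $J_\varepsilon^\perp$ (supported in $t\gtrsim\varepsilon$, where $J$ is smooth) one takes \emph{integer} $x$-derivatives $\nabla_x^l$ at cost $\varepsilon^{-l(1+1/(2s))-1/(2s)}$, while $J_\varepsilon$ is controlled only in $L^1$; a dyadic Littlewood--Paley decomposition in the $x$-frequency then balances the two pieces, optimising in $\varepsilon$ at each frequency scale, and recovers the fractional range $\sigma<\frac{s}{1+2s}$. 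Your direct approach is more economical and should work once you verify $(-\Delta_x)^{\sigma/2}\mathcal J\in L^1_{x,v}$ (this is the ``polynomial decay of $\Phi$'' issue you flag; it follows from the $|x|^{-d-\sigma}$ tail of the fractional derivative of an integrable profile, but needs to be checked). The paper's interpolation argument buys robustness: it never leaves integer-order derivatives of a smooth kernel, so no such $L^1$ estimate on a fractional derivative of the profile is required.
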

\begin{proof}
Equation \eqref{eq:2.1} admits a fundamental solution, see for example Theorem 1.1 in \cite{Niebel} or Section 2.4 in \cite{IS}, given by
\bals
	f(t, x, v) &= \int_{\R^{2d+1}}\big[h(t', x', v') - m(t', x', v')\big]J\left(t-t', x-x'-(t-t')v', v-v'\right)\dd t'\dd x'\dd v' \\
	&\qquad+ \int_{\R^{2d}}f_0(x', v')J\left(t, x-x'-(t-t')v', v-v'\right)\dd x'\dd v',
\eals
where $(t, x, v) \in (0, \tau)\times \R^{2d}$ and
\beqs
	J(t, x, v) = \frac{C_d}{t^{d+\frac{d}{s}}}\mathcal J\left(\frac{x}{t^{1+\frac{1}{2s}}}, \frac{v}{t^{\frac{1}{2s}}}\right), \qquad \hat{\mathcal J}(\varphi, \xi) = \exp\left(-\int_0^1 \abs{\xi -\tau\varphi}^{2s}\dd \tau\right).
\eeqs
Since $m$ and $J$ are non-negative, we deduce that
\bals
	0 \leq f(t, x, v) &\leq \int_{\R^{2d+1}}h(t', x', v')J\left(t-t', x-x'-(t-t')v', v-v'\right)\dd t'\dd x'\dd v' \\
	&\qquad+ \int_{\R^{2d}}f_0(x', v')J\left(t, x-x'-(t-t')v', v-v'\right)\dd x'\dd v'.
\eals
We remark that for any $r \geq 1$ there holds for $t>0$
\bals
	&\left\Vert J(t, \cdot, \cdot)\right\Vert_{L^r(\R^{2d})} =t^{-d(1+\frac{1}{s})(1-\frac{1}{r})}\left\Vert\mathcal J\right\Vert_{L^r(\R^{2d})}, \\
	&\left\Vert(-\Delta)^{\frac{s}{2}}_vJ(t, \cdot, \cdot)\right\Vert_{L^r(\R^{2d})} =t^{-d(1+\frac{1}{s})(1-\frac{1}{r})-\frac{1}{2}}\left\Vert(-\Delta)^{\frac{s}{2}}_v\mathcal J\right\Vert_{L^r(\R^{2d})}.
\eals
In particular for $r = p^* := \frac{2d(1+s) + 2s}{2d(1+s) + s} \in (1, 2)$ we deduce
\bals
	&\left \Vert J(t, \cdot, \cdot)\right\Vert_{L^{p^*}(\R^{2d})} =t^{\frac{1}{2}-\frac{1}{p^*}}\left\Vert \mathcal J\right\Vert_{L^{p^*}(\R^{2d})}, \\
	&\left \Vert (-\Delta)^{\frac{s}{2}}_vJ(t, \cdot, \cdot)\right\Vert_{L^{p^*}(\R^{2d})} =t^{-\frac{1}{p^*}}\left\Vert (-\Delta)^{\frac{s}{2}}_v\mathcal J\right\Vert_{L^{p^*}(\R^{2d})}.
\eals

We define as in Section 2.4 of \cite{IS} the modified convolution
\bals
	f*_t g(x, v):= \int_{\R^{2d}}f(x', v')g(x - x' -tv', v-v')\dd x'\dd v'.
\eals	
We remark that the modified convolution satisfies the usual Young inequality independent of $t$:
\bals
	\norm{f*_t g}_{L^r_{x, v}} \leq \norm{f}_{L^p_{x, v}}\norm{g}_{L^q_{x, v}}
\eals
for $1 + \frac{1}{r} = \frac{1}{p} + \frac{1}{q}$. Following the proof of Proposition 2.2 in \cite{IS}, we split $f = \tilde f_0 + f_1 + f_2$ with
\bals
	&\tilde f_0(t, \cdot, \cdot) := f_0*_tJ(t, \cdot, \cdot),\\
	&f_1(t, \cdot, \cdot) := \int_0^t h_1*_{(t-t')}J(t-t', \cdot, \cdot)\dd t',\\
	&f_2(t, \cdot, \cdot) := \int_0^t h_2*_{(t-t')}(-\Delta)_v^{\frac{s}{2}}J(t-t', \cdot, \cdot)\dd t'.
\eals
Let $q \in [1, p^*)$ be such that $\frac{1}{p} = \frac{1}{q} - \frac{1}{2}$. By Young's inequality we get for $\alpha = d\left(1+\frac{1}{s}\right)\left(1-\frac{1}{q}\right)+\frac{1}{2} < \frac{1}{p^*}<\frac{1}{q}$
\bals
	\norm{\tilde f_0(t, \cdot, \cdot)}_{L^p(\R^{2d})} &\leq \norm{f_0}_{L^2(\R^{2d})}\norm{J(t, \cdot, \cdot)}_{L^{q}(\R^{2d})} = \norm{f_0}_{L^2(\R^{2d})}\norm{\mathcal J}_{L^{q}(\R^{2d})}t^{\frac{1}{2}-\alpha},\\
	\norm{f_1(t, \cdot, \cdot)}_{L^p(\R^{2d})} &\leq \int_0^t \norm{h_1(t')}_{L^2(\R^{2d})}\norm{J(t-t', \cdot, \cdot)}_{L^{q}(\R^{2d})}\dd t'\\
	&= \int_0^t \norm{h_1(t')}_{L^2(\R^{2d})}\norm{\mathcal J}_{L^{q}(\R^{2d})}(t-t')^{\frac{1}{2}-\alpha}\dd t',\\
	\norm{f_2(t, \cdot, \cdot)}_{L^p(\R^{2d})} &\leq \int_0^t \norm{h_2(t')}_{L^2(\R^{2d})}\norm{(-\Delta)^{\frac{s}{2}}_v J(t-t', \cdot, \cdot)}_{L^{q}(\R^{2d})}\dd t'\\
	&= \int_0^t \norm{h_2(t')}_{L^2(\R^{2d})}\norm{(-\Delta)^{\frac{s}{2}}_v \mathcal J}_{L^{q}(\R^{2d})}(t-t')^{-\alpha}\dd t'.
\eals
Since $p\left(\frac{1}{2} - \alpha\right) > -1$ we get that $\tilde f_0 \in L^p([-\tau, 0]\times\R^{2d})$ and
\beqs
	\norm{\tilde f_0}_{L^p([-\tau, 0]\times\R^{2d})} \leq C \norm{f_0}_{L^2(\R^{2d})}.
\eeqs
For $f_1$ and $f_2$ we apply Young's inequality again and get
\bals
	&\norm{f_1}_{L^p([-\tau, 0]\times\R^{2d})} \leq C\norm{h_1}_{L^2([-\tau, 0]\times\R^{2d})}\tau^{\frac{1}{2}-\alpha+\frac{1}{q}},\\
	&\norm{f_2}_{L^p([-\tau, 0]\times\R^{2d})} \leq C\norm{h_2}_{L^2([-\tau, 0]\times\R^{2d})}\tau^{\frac{1}{q}-\alpha}.
\eals
This implies \eqref{eq:2.2}. 
To prove \eqref{eq:2.3} we follow the idea of Lemma 10 in \cite{JGCM}. We split for $(t,x,v)\in(0,\tau)\times\R^{2d}$ 
\beqs
	J = J_\varepsilon + J_\varepsilon^\perp \quad\textrm{with}\quad J_\varepsilon(t, x, v) := \eta\left(\frac{t}{\varepsilon}\right)J(t, x, v),
\eeqs
where $\varepsilon > 0$ and $\eta$ is a smooth function on $\R_+$ such that $0 \leq \eta \leq 1$, equal to $1$ in $[0, 1]$ and $0$ on $[2, +\infty)$ (we assume without loss of generality $\tau \geq 1$). Then we estimate for $l \in \N$
\bals
	\Abs{\nabla_x^l J_\varepsilon^\perp(t, x, v)} &\lesssim t^{-(d+\frac{d}{s})}\varepsilon^{-l(1+\frac{1}{2s})}\big\lvert\nabla_y^l \mathcal J_\varepsilon^\perp(y, w)\big\rvert,\\
	\Abs{(-\Delta)^{\frac{s}{2}}_v\nabla_x^l J_\varepsilon^\perp(t, x, v)} &+ t\abs{\nabla_x\nabla_x^l J_\varepsilon^\perp} \\
	&\lesssim t^{-(d+\frac{d}{s})}\varepsilon^{-l(1+\frac{1}{2s})}\Big[\varepsilon^{-\frac{1}{2s}}\big\lvert\nabla_y\nabla_y^l \mathcal J_\varepsilon^\perp(y,w)\big\rvert+\varepsilon^{-\frac{1}{2}}\big\lvert(-\Delta)_w^{\frac{s}{2}}\nabla_y^l\mathcal J_\varepsilon^\perp(y, w)\big\rvert\Big]\\
	&\lesssim t^{-(d+\frac{d}{s})}\varepsilon^{-l(1+\frac{1}{2s})-\frac{1}{2s}}\Big[\big\lvert\nabla_y\nabla_y^l \mathcal J_\varepsilon^\perp(y, w)\big\rvert+\big\lvert(-\Delta)_w^{\frac{s}{2}}\nabla_y^l\mathcal J_\varepsilon^\perp(y, w)\big\rvert\Big].
\eals	
Now assuming $\varepsilon < 1$ these estimates yield
\bals
	\norm{J_\varepsilon^\perp}_{L^1_{t, v}([-\tau, 0]\times \R^d; W_x^{l, 1}(\R^d))}&+\norm{(-\Delta)^{\frac{s}{2}}J_\varepsilon^\perp}_{L^1_{t, v}([-\tau,0]\times \R^d; W_x^{l, 1}(\R^d))}\\
	&+\norm{t \nabla_x J_\varepsilon^\perp}_{L^1_{t, v}([-\tau, 0]\times \R^d; W_x^{l, 1}(\R^d))} \lesssim \tau\varepsilon^{-l(1+\frac{1}{2s})-\frac{1}{2s}},
\eals
and
\bals
	\norm{J_\varepsilon}_{L^1_{t, v}([-\tau, 0]\times \R^{2d})}+\norm{(-\Delta)^{\frac{s}{2}}J_\varepsilon}_{L^1_{t, v}([-\tau, 0]\times \R^{2d})} + \norm{t \nabla_x J_\varepsilon}_{L^1_{t, v}([-\tau, 0]\times \R^{2d})}\lesssim \tau\varepsilon^{\frac{1}{2s}}.
\eals
The splitting on $J$ yields a splitting on the solution $f = f_\varepsilon + f_\varepsilon^\perp$. Young's convolution inequality and the convolution inequality on $M^1 \ast L^1 \to L^1$ imply
\bals
	\norm{f_\varepsilon^\perp}_{L^1_{t, v}W^{l, 1}_x([-\tau,0]\times \R^{2d})}\lesssim \tau\varepsilon^{-l(1+\frac{1}{2s})-\frac{1}{2s}}\big(\norm{h_1}_{L^1([-\tau,0]\times \R^{2d})}&+\norm{h_2}_{L^1([-\tau,0]\times \R^{2d})}\\
	&+ \norm{m}_{M^1([-\tau, 0]\times \R^{2d})}+\norm{f_0}_{L^1(\R^{2d})}\big),
\eals
and
\bals
	\norm{f_\varepsilon}_{L^1([-\tau,0]\times \R^{2d})}\lesssim \tau\varepsilon^{\frac{1}{2s}}\big(\norm{h_1}_{L^1([-\tau,0]\times \R^{2d})}&+\norm{h_2}_{L^1([-\tau,0]\times \R^{2d})}\\
	&+\norm{m}_{L^1([-\tau, 0]\times \R^{2d})}+\norm{f_0}_{L^1(\R^{2d})}\big).
\eals
The decomposition above holds for all $\varepsilon >0$; thus we can conclude the proof with the same justification as in the proof of Lemma 10 in \cite{JGCM}: Let $\sigma \in \big[0, \frac{s}{1+2s}\big)$. Using the notation $\langle\zeta\rangle :=(1+\abs{\zeta}^2)^{\frac{1}{2}}$ we can decompose 
\bal
	(1 - \Delta_x)^{\frac{\sigma}{2}}f(t, x, v) &= \int_{\R^{2d}}e^{i\zeta\cdot(x-\varphi)}\langle\zeta\rangle^\sigma f(t, \varphi, v)\dd\varphi\dd\zeta \\
	&= \sum_{k\geq-1}\int_{\R^{2d}}e^{i\zeta\cdot(x-\varphi)}a_k(\zeta) f(t, \varphi, v)\dd\varphi\dd\zeta\\
	&= \sum_{k\geq-1}\int_{\R^{2d}}e^{i\zeta\cdot(x-\varphi)}b_k(\zeta)(1-\Delta_\varphi)^{\frac{l}{2}}f(t, \varphi, v)\dd\varphi\dd\zeta,
\label{eq:2.4}
\eal
where $a_k(\zeta) := \langle\zeta\rangle^\sigma\phi_k$ and $b_k(\zeta) :=\langle\zeta\rangle^{\sigma -l}\phi_k$ with $\phi_k(\zeta) := [\eta(2^{-k}\zeta)-\eta(2^{-k+1}\zeta)]$ for $k\geq 0$ where $\eta$ is a smooth function such that $0 \leq \eta \leq 1$ and $\eta = 1$ in $B_1(0)$ and $0$ outside $B_2(0)$. For $k=-1$ we define $\phi_{-1}(\zeta) := \sum_{k< -1}[\eta(2^{-k}\zeta)-\eta(2^{-k+1}\zeta)]$. Note that for a given function $F = F(\varphi)$ there holds
\bals
	\int_{\R^d}\Big\lvert\int_{\R^{2d}}e^{i\zeta\cdot(x-\varphi)}a_k(\zeta)F(y)\dd\varphi\dd\zeta\Big\rvert\dd x&\lesssim 2^{k\sigma}\norm{F}_{L^1},\\
	\int_{\R^d}\Big\lvert\int_{\R^{2d}}e^{i\zeta\cdot(x-\varphi)}b_k(\zeta)(1-\Delta_\varphi)^{\frac{l}{2}}F(y)\dd\varphi\dd\zeta\Big\rvert\dd x&\lesssim 2^{k(\sigma-l)}\norm{F}_{W^{l,1}}.
\eals
Therefore we find for the solution $f$ to the fractional Kolmogorov equation
\bals
	&\norm{f}_{L^1_{t, v}W^{\sigma, 1}_x([-\tau,0]\times\R^{2d})} \\
	&\quad= \norm{(1-\Delta)_x^{\frac{\sigma}{2}}f}_{L^1([-\tau,0]\times\R^{2d})}\\
	&\quad= \int_{[-\tau,0]\times\R^{2d}}\Bigg\lvert \sum_{k\geq-1}\int_{\R^{2d}}\Big[e^{i\zeta\cdot(x-\varphi)}a_k(\zeta) f_\varepsilon(t, \varphi, v) \\
	&\qquad\qquad\qquad+ e^{i\zeta\cdot(x-\varphi)}b_k(\zeta)(1-\Delta_\varphi)^{\frac{l}{2}}f_\varepsilon^\perp(t, \varphi, v)\Big]\dd\varphi\dd\zeta\Bigg\rvert\dd t\dd x\dd v\\
	&\quad\lesssim \sum_{k\geq-1}\tau\left(2^{k\sigma}\varepsilon^{\frac{1}{2s}} + 2^{k(\sigma-l)}\varepsilon^{-l(1+\frac{1}{2s})-\frac{1}{2s}}\right)\left(\norm{h_1}_{L^1} + \norm{h_2}_{L^1}+\norm{f_0}_{L^1} +\norm{m}_{M^1}\right)\\
	&\quad\lesssim \frac{\tau}{\delta}\Big(\norm{h_1}_{L^1([-\tau,0]\times\R^{2d})} + \norm{h_2}_{L^1([-\tau,0]\times\R^{2d})}+\norm{f_0}_{L^1(\R^{2d})} + \norm{m}_{M^1([-\tau,0]\times\R^{2d})}\Big),
\eals
where we choose $\sigma = \frac{s}{1+2s} -\delta \in \big[0, \frac{s}{1+2s}\big)$, $\varepsilon = 2^{-2ks(\frac{s}{1+2s}-\frac{\delta}{2})}$ and $l > \frac{4s-3\delta(1+2s)}{(1+2s)(\delta(1+2s)+2-2s)}$ for some small $\delta > 0$. This concludes the proof. 
\end{proof}

\subsection{Energy estimates}
\label{subsec:enestim} The following two lemmas are used to extract information from the equation.
\begin{lemma}[Local energy estimate]\label{prop:9}
Let $f$ be a non-negative sub-solution to \eqref{eq:1.1}-\eqref{eq:1.2} in $Q_{\frac{\bar R}{2}}$ for some $\bar R \geq 1$ where $0\leq K$ satisfies \eqref{eq:coercivity}-\eqref{eq:cancellation2}. Let $Q_{r}(z_0)\subset Q_R(z_0)\subset Q_{\frac{\bar R}{2}}$ with $0 < r < R< \min\Big\{1, \frac{\bar R}{2}\Big\}$ be given. Assume $0 \leq f \leq 1$ a.e. in $\big(-R^{2s}+t_0, t_0\big] \times B_{R^{1+2s}}(x_0 + t_0v_0) \times \R^d$. Then there holds
\bals
	\sup_{\tau\in(-r^{2s}+t_0, t_0)}&\int_{Q^\tau_r(z_0)}f^2(\tau, x, v)\dd x\dd v+\int_{Q_r(z_0)}\int_{B_r(z_0)}\frac{\abs{f(w)-f(v)}^2}{\abs{v-w}^{d+2s}}\dd w\dd z\\
	&\lesssim_{\lambda, \Lambda, v_0}\int_{Q^{-r^{2s}+t_0}_R(z_0)}f_0^2(x, v)\dd x\dd v + (R-r)^{-2}\Abs{\{f > 0\} \cap Q_R(z_0)} + \int_{Q_R(z_0)} h^2\dd z,
\eals
where $z_0 = (t_0, x_0, v_0)$, $f_0(x, v) = f\left(-r^{2s}+t_0, x, v\right)$ and $$Q^\tau_r(z_0) = \left\{(x, v)\in \R^{2d} : (\tau, x, v) \in Q_r(z_0)\right\}.$$
\end{lemma}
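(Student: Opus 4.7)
The plan is to test the weak sub-solution inequality against $\varphi^2 f$ for a suitable cutoff. First I would translate via the Galilean group so that $z_0 = 0$; the $v_0$-dependence of the constant arises because the support constraint $B_{\bar R/2}$ in \eqref{eq:coercivity} needs $\abs{v_0}+R\leq \bar R/2$ after translation. Choose a smooth cutoff $\varphi(t,x,v)\in[0,1]$ with $\varphi\equiv 1$ on $Q_r$ and $\mathrm{supp}\,\varphi\subset Q_R$, satisfying $\abs{\mathcal T\varphi}\lesssim (R-r)^{-2s}$ and $\norm{\nabla_v\varphi}_{L^\infty}\lesssim (R-r)^{-1}$. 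Since $0\leq f\leq 1$ on $(-R^{2s},0]\times B_{R^{1+2s}}\times \R^d$, the function $\varphi^2 f$ is admissible as a non-negative test function, giving
\beqs
    \int_{Q_R}(\mathcal T f)\varphi^2 f \dd z + \int\!\!\int \mathcal E(f,\varphi^2 f)\dd t\dd x \leq \int_{Q_R} h\varphi^2 f \dd z.
\eeqs

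For the transport term, I would use $(\mathcal T f)\varphi^2 f = \tfrac12\mathcal T(\varphi^2 f^2)-\tfrac12 f^2\mathcal T(\varphi^2)$ so that integration in time from $-R^{2s}$ to $\tau$ produces $\tfrac12\int_{Q^\tau_r} f^2(\tau)$, the initial contribution $\tfrac12\int_{Q^{-R^{2s}}_R} f_0^2$, and a commutator bounded by $(R-r)^{-2s}\int_{Q_R}f^2\lesssim (R-r)^{-2}\Abs{\{f>0\}\cap Q_R}$ thanks to $f\leq 1$ and $R-r<1$. The source is controlled by Young's inequality together with $\varphi^2 f \leq \chi_{\{f>0\}}$, giving $\tfrac12\int_{Q_R}h^2+\tfrac12\Abs{\{f>0\}\cap Q_R}$.

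For the nonlocal piece I would symmetrize $\mathcal E(f,\varphi^2 f)$ via the pointwise identity
\beqs
    (f(v)-f(w))(\varphi^2(v) f(v)-\varphi^2(w)f(w)) = (\varphi f(v)-\varphi f(w))^2 - f(v)f(w)(\varphi(v)-\varphi(w))^2,
\eeqs
decomposing $\mathcal E(f,\varphi^2 f)$ into a coercive good term, a Caccioppoli-type error, and an antisymmetric remainder coming from $K(v,w)-K(w,v)$. The coercivity \eqref{eq:coercivity-full} bounds the good term below by $\lambda\norm{\varphi f}_{\dot H^s(\R^d)}^2-\Lambda\norm{\varphi f}_{L^2}^2$; since $\varphi\equiv 1$ on $Q_r$ the $\dot H^s$ seminorm dominates $\int_{Q_r}\int_{B_r}\abs{f(v)-f(w)}^2\abs{v-w}^{-d-2s}\dd w\dd z$. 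For the Caccioppoli error I use $(\varphi(v)-\varphi(w))^2\leq\min\{\abs{v-w}^2(R-r)^{-2},4\}$ split at radius $R-r$ together with \eqref{eq:upperbound-2}-\eqref{eq:upperbound} to obtain $\int K(v,w)(\varphi(v)-\varphi(w))^2\dd w\lesssim \Lambda(R-r)^{-2s}$; then $f(v)f(w)\leq \chi_{\{f>0\}}(v)\chi_{\{f>0\}}(w)$ yields a contribution $\lesssim (R-r)^{-2}\Abs{\{f>0\}\cap Q_R}$. The antisymmetric remainder is dispatched by invoking \eqref{eq:cancellation1}-\eqref{eq:cancellation2} together with Taylor expansion of $\varphi^2 f$, exactly as in the proof of Theorem~\ref{thm:4.1}, and absorbing the result via Young's inequality.

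The main obstacle is the antisymmetric remainder: since $K$ is not symmetric, the weaker cancellation conditions \eqref{eq:cancellation1}-\eqref{eq:cancellation2} must be invoked, and the resulting error has to be carefully dispatched either into the coercive good term (absorbed via Young) or into the prescribed $\Abs{\{f>0\}\cap Q_R}$-term using $f\leq 1$. Taking the supremum in $\tau\in(-r^{2s},0)$ and absorbing the $\dot H^s$-errors by Young's inequality then delivers the stated energy estimate.
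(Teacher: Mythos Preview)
Your approach is correct in spirit but differs from the paper's, and the antisymmetric step is not as innocent as you suggest.

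\textbf{Different test function.} The paper does \emph{not} test with $\varphi^2 f$. Instead it chooses a cutoff $\psi$ equal to $0$ on $Q_r^t(z_0)$ and $1$ outside $Q_R^t(z_0)$, and tests with $(f-\psi)_+$. The decomposition $f=(f-\psi)_++(f-\psi)_-+\psi$ then splits $\mathcal E(f,(f-\psi)_+)$ into $\mathcal E((f-\psi)_+,(f-\psi)_+)$ (coercive), $\mathcal E((f-\psi)_-,(f-\psi)_+)\geq 0$ (a sign, not an estimate), and $\mathcal E(\psi,(f-\psi)_+)$. Two advantages: first, the non-negative cross term is exactly the ``good extra'' quantity $-\int\int(f-\psi)_-(w)(f-\psi)_+(v)K(v,w)\dd w\dd v$ that is later exploited in the Intermediate Value Lemma (Theorem~\ref{thm:IVL}); your Caccioppoli identity does not produce this. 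Second, in the antisymmetric remainder the inner $w$-integral involves only the \emph{smooth} function $\psi$, so Taylor expansion and the PV cancellation \eqref{eq:cancellation2} apply directly.

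\textbf{The gap in your antisymmetric step.} In your scheme the skew part reduces to an expression of the form
\[
\int\!\!\int f(v)\,f(w)\,\big(\varphi^2(w)-\varphi^2(v)\big)\big(K(v,w)-K(w,v)\big)\dd w\dd v,
\]
with a non-smooth factor $f(w)$ inside the $w$-integral. You cannot ``Taylor expand $\varphi^2 f$'' here: $f$ is not $C^1$, and for $s\geq\tfrac12$ the cancellation \eqref{eq:cancellation2} is a genuine principal-value statement which is destroyed once you take absolute values to strip off $f(w)$. The fix is to write $f(w)=f(v)+(f(w)-f(v))$: the first piece pulls $f(v)$ outside so that the inner integral matches the paper's computation on $\psi$; the second piece is controlled by Cauchy--Schwarz against $\int(f(w)-f(v))^2K\dd w$ and $\int(\varphi(w)-\varphi(v))^2K\dd w\lesssim\Lambda(R-r)^{-2s}$, then absorbed via Young into the coercive term. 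This works, but it is more than ``exactly as in Theorem~\ref{thm:4.1}''. The paper's choice of $(f-\psi)_+$ sidesteps this issue entirely.
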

\begin{proof}
For $t \in \big(-r^{2s}+t_0, t_0\big]$ we let $\psi \in C^\infty(\R^{2d})$ be a cut-off function with $0 \leq \psi \leq 1$, such that $\psi = 0$ in $Q_r^t(z_0)$ and $\psi = 1$ outside $Q_R^t(z_0)$. 
We test \eqref{eq:1.1} with the truncated function $(f - \psi)_+ = \sup\big(0, f - \psi\big)$. We notice that 
$f = (f - \psi)_+ + (f - \psi)_- + \psi$ and $\nabla_x (f - \psi)^2_+ = 2 (f - \psi)_+ \big(\nabla_x f - \nabla_x\psi\big)$. This yields for a.e. $t \in (-r^{2s}+t_0, t_0]$
\bals
	\int_{\R^{2d}} h (f - \psi)_+ \dd x\dd v &\geq \int_{\R^{2d}} \mathcal T f (f - \psi)_+ \dd x \dd v + \int_{\R^d} \mathcal E(f, (f - \psi)_+)\dd x\\
	&= \frac{1}{2}\frac{\dd}{\dd t}\int_{\R^{2d}} \mathcal  (f - \psi)_+^2 \dd x\dd v + \int_{\R^d}\mathcal E((f - \psi)_+, (f - \psi)_+) \dd x\\
	&\qquad + \int_{\R^d}\mathcal E((f - \psi)_-, (f - \psi)_+)\dd x + \int_{\R^d}\mathcal E(\psi, (f - \psi)_+)\dd x \\
	&\qquad+ \int_{\R^{2d}} (f - \psi)_+ \mathcal T\psi \dd x \dd v.
\eals
Due to the coercivity of $K$ \eqref{eq:coercivity-full} we know that 
\bals
	\mathcal E\big((f - \psi)_+, (f - \psi)_+\big) + \Lambda\norm{(f - \psi)_+}_{L^2}^2 \geq \lambda \norm{(f - \psi)_+}_{H^s_v}^2.
\eals
Moreover we notice that 
\bals
	\mathcal E\big((f - \psi)_-, (f - \psi)_+\big) &= \int_{\R^d} \int_{\R^d} \big[(f - \psi)_-(v) - (f - \psi)_-(w)\big](f - \psi)_+(v) K(v, w) \dd w\dd v\\
	&= - \int_{\R^d} \int_{\R^d}(f - \psi)_-(w)(f - \psi)_+(v) K(v, w) \dd w\dd v \geq 0,
\eals
since $(f-\psi)_+ \cdot (f-\psi)_- = 0$. This term will be fully exploited for the Second De Giorgi Lemma, Theorem \ref{thm:IVL}. Finally to bound $\mathcal E(\psi, (f - \psi)_+)$ we distinguish the symmetric and the skew-symmetric part
\bals
	-\mathcal E\big(\psi, (f-\psi)_+\big) &= \int_{\R^d} \int_{\R^d} (f - \psi)_+(v)\big[\psi(w)-\psi(v)\big] K(v, w) \dd w\dd v\\
	&= \frac{1}{2}\int_{\R^d} \int_{\R^d} \big[(f - \psi)_+(v) - (f - \psi)_+(w)\big]\big[\psi(w)-\psi(v)\big] K(v, w) \dd w\dd v \\
	&\quad+  \frac{1}{2}\int_{\R^d} \int_{\R^d} \big[(f - \psi)_+(v) + (f - \psi)_+(w)\big]\big[\psi(w)-\psi(v)\big] K(v, w) \dd w\dd v.
\eals
Realising that $\Abs{g_+(v) - g_+(w)} \leq \big(\chi_{\{g > 0 \}}(v) +  \chi_{\{g> 0 \}}(w)\big)\Abs{g_+(v) - g_+(w)}$ we find
\bals
	-\mathcal E^{\textrm{sym}}\big(\psi, (f - \psi)_+\big) &:= \frac{1}{2} \int_{\R^d} \int_{\R^d} \big[(f - \psi)_+(v) - (f - \psi)_+(w)\big]\big[\psi(w)-\psi(v)\big] K(v, w) \dd w\dd v\\
	&\leq \frac{1}{8} \mathcal E^{\textrm{sym}}\big((f - \psi)_+, (f - \psi)_+\big) \\
	&\quad+ \int\int \big[\chi_{\{ f - \psi > 0 \}}(v) + \chi_{\{ f - \psi > 0 \}}(w)\big]
\abs{\psi(w) - \psi(v)}^2K(v, w) \dd w \dd v\\
	&\leq  \frac{1}{8}\norm{(f-\psi)_+}^2_{H^s} + \norm{\psi}_{C^1_v}^2 \int \chi_{\{ f - \psi > 0 \}} (v) \dd v,
\eals 
where we used Theorem \ref{thm:4.1} and the upper bound \eqref{eq:upperbound}. For the skew-symmetric part we have
\bals
	-\mathcal E^{\textrm{skew}}&(\psi, (f - \psi)_+) \\
	&:=  \frac{1}{2}\int_{\R^d} \int_{\R^d} \big[(f - \psi)_+(v) + (f - \psi)_+(w)\big]\big[\psi(w)-\psi(v)\big] K(v, w) \dd w\dd v\\
	&= \frac{1}{4}\int\int\big[\psi(w) - \psi(v)\big]\big[(f - \psi)_+(v) + (f - \psi)_+(w)\big]\big[K(v, w) - K(w, v)\big]\dd w \dd v\\
	&=\frac{1}{2}\int\int\big[\psi(w) - \psi(v)\big](f - \psi)_+(v)\big[K(v, w) - K(w, v)\big] \dd w \dd v\\
	&=\frac{1}{2}\int(f - \psi)_+(v)\Bigg(\int\big[\psi(w) - \psi(v)\big]\big[K(v, w) - K(w, v)\big] \dd w \Bigg)\dd v.
\eals
We estimate
\bals
	\textrm{PV} \int\big[\psi(w) - \psi(v)\big]&\big[K(v, w) - K(w, v)\big] \dd w = \int_{B_r(v)} \dots + \int_{\R^d\setminus B_r(v)}\dots\\
	&\leq \int_{B_r(v)}(w - v)\cdot\nabla_v \psi(v)\big[K(v, w) - K(w, v)\big] \dd w \\
	&\quad+ \int_{B_r(v)}\abs{w - v}^2\norm{D_v^2 \psi}_{L^\infty}\abs{K(v, w) - K(w, v)} \dd w + \Lambda r^{-2s} \norm{\psi}_{L^\infty}\\
	&\leq \Lambda r^{1-2s} \norm{\psi}_{C^1_v} + \Lambda r^{2-2s} \norm{\psi}_{C^2_v} + \Lambda r^{-2s} \norm{\psi}_{L^\infty}.
\eals	
For the tail we used the upper bound \eqref{eq:upperbound}. For the near part we used \eqref{eq:upperbound-2} to bound 
\bals
	 \int_{B_r(v)}\abs{v - w}^2\Abs{K(v, w) - K(w, v)} \dd w \lesssim \Lambda r^{2-2s}.
\eals
Moreover, in case that $s \in (0, 1/2)$ we again get by \eqref{eq:upperbound-2}
\beqs
	\int_{B_r(v)} \abs{v-w} \Abs{K(v, w) - K(w, v)} \dd w  \lesssim \Lambda r^{1-2s},
\eeqs
and in case that $s \in [1/2, 1)$ we use the cancellation assumptions \eqref{eq:cancellation2} 
\bals
	\Bigg\vert\textrm{PV}\int_{B_r(v)}(v - w)\big(K(v, w) - K(w, v)\big) \dd w\Bigg\vert \lesssim \Lambda r^{1-2s}.
\eals
Thus we find
\bals
	-\mathcal E^{\textrm{skew}}(\psi, (f - \psi)_+) &\leq C  \Big( r^{1-2s} \norm{\psi}_{C^1_v} +  r^{2-2s} \norm{\psi}_{C^2_v} +  r^{-2s} \norm{\psi}_{L^\infty}\Big)\int (f - \psi)_+(v)\dd v \\
	&\leq C \norm{\psi}_{C^2_v} \int (f - \psi)_+(v)\dd v,
\eals
so that
\bals
	-\mathcal E(\psi, (f - \psi)_+) \leq \frac{1}{8}\norm{(f-\psi)_+}^2_{H^s} + \norm{\psi}_{C^1_v}^2 \int \chi_{\{ f - \psi > 0 \}} (v) \dd v + C \norm{\psi}_{C^2_v} \int (f - \psi)_+(v)\dd v.
\eals
Gathering the estimates we obtained so far we get for any $t \in (-r^{2s} +t_0, t_0]$
\bals
	\int_{\R^{2d}} &\mathcal  (f - \psi)_+^2(t, x, v) \dd x\dd v +\lambda \int_{-r^2s+t_0}^{t_0}\int_{\R^d} \norm{(f - \psi)_+}_{H^s_v}^2\dd x\dd t \\
	&\leq \int_{\R^{2d}} \mathcal  (f - \psi)_+^2(r^{2s}+t_0, x, v) \dd x\dd v+  C \norm{\psi}_{C_v^2} \int_{-r^2s+t_0}^{t_0}\int_{\R^{2d}} (f - \psi)_+(v)\dd z \\
	&\quad+ \norm{\psi}_{C_v^1}^2\Abs{\{f - \psi > 0\} \cap (-r^{2s} +t_0, t_0]} + C\frac{v_0 + R}{(R-r)^{1+2s}} \int_{-r^2s+t_0}^{t_0}\int_{\R^{2+}} (f - \psi)_+(v)\dd z\\
	&\quad+ C\Lambda\int_{-r^2s+t_0}^{t_0}\int_{\R^{2d}} (f - \psi)^2_+(v)\dd z + \int_{Q_R(z_0)} h^2\dd z.
\eals
Since $0 \leq (f-\psi)_+ \leq 1$ the $L^1$ norm and $L^2$ norm are bounded by $\Abs{\{f - \psi > 0\} \cap (-r^{2s} +t_0, t_0]} \leq \Abs{\{f > 0\} \cap Q_R(z_0)}$. Moreover, we chose $\psi$ such that $\norm{D_v\psi}_{L^\infty} \lesssim \frac{1}{R-r}$, $\norm{D^2_v\psi}_{L^\infty} \lesssim \frac{1}{(R-r)^2}$ and $\norm{D_x \psi}_{L^\infty} \lesssim \frac{1}{(R-r)^{1+2s}}$.
Finally we note that $(f-\psi)_+ = f$ in $Q_r(z_0)$, $(f-\psi)_+ \leq f$ everywhere and $(f-\psi)_+ = 0$ outside $Q_R(z_0)$ and conclude.
\end{proof}

\begin{lemma}[Local gain of integrability]\label{prop:11}
Let $f$ be a non-negative sub-solution of \eqref{eq:1.1}-\eqref{eq:1.2} in $Q_R(z_0)$ with a non-negative kernel satisfying \eqref{eq:coercivity}-\eqref{eq:cancellation2} for some $\bar R \geq 1$. Let $Q_r(z_0)\subset Q_R(z_0)\subset Q_{\frac{\bar R}{2}}$ for some $0 < r < R \leq 1$.  Assume $0 \leq f \leq 1$ a.e. in $\big(-R^{2s}+t_0, t_0\big] \times B_{R^{1+2s}}(x_0 + t_0v_0)\times \R^d$. Given $p \in \Big[2, 2+ \frac{2s}{d(1+s)}\Big)$, $f$ satisfies
\bal
	\norm{f}^2_{L^p(Q_r(z_0))} \lesssim_{\lambda, \Lambda, v_0} &(R-r)^{-2}\int_{Q_R^t(z_0)} f^2_0(x, v) \dd x\dd v \\
	&+ (R-r)^{-4}\Abs{\{f > 0\} \cap Q_R(z_0)} + (R-r)^{-2}\int_{Q_R(z_0)} h^2\dd z,
\label{eq:2.5}
\eal
and
\bal
	\norm{f}^2_{L^1_{t, v}W^{\sigma, 1}_x(Q_r(z_0))} \lesssim_{\lambda, \Lambda, v_0} &(R-r)^{-2}\int_{Q_R^t(z_0)} f^2_0(x, v) \dd x\dd v\\
	&+(R-r)^{-4}\Abs{\{f > 0\} \cap Q_R(z_0)} +(R-r)^{-2}\int_{Q_R(z_0)} h^2\dd z,
\label{eq:2.6}
\eal
where $z_0 = (t_0, x_0, v_0)$, $f_0(x, v) = f\big(-r^{2s}+t_0, x, v\big)$ and $Q^\tau_r(z_0) = \left\{(x, v)\in \R^{2d} : (\tau, x, v) \in Q_r(z_0)\right\}$.
\end{lemma}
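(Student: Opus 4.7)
The plan is to reduce Lemma \ref{prop:11} to the fractional Kolmogorov regularity bounds of Proposition \ref{prop:2.2} via a localization argument. Without loss of generality take $z_0 = 0$. I first introduce intermediate radii $r < r_1 < r_2 < R$ with $r_{j+1} - r_j \approx (R-r)$ and apply Lemma \ref{prop:9} twice, on the nested pairs $(Q_{r_2}, Q_R)$ and then $(Q_{r_1}, Q_{r_2})$. This bounds $\sup_\tau \norm{f(\tau,\cdot,\cdot)}_{L^2(Q_{r_1}^\tau)}^2 + \norm{f}_{L^2_{t,x}\dot H^s_v(Q_{r_1})}^2$ by a universal constant times
\beqs
	E := (R-r)^{-2}\int_{Q_R^{-R^{2s}}}f_0^2\,dx\,dv + (R-r)^{-4}\abs{\{f > 0\}\cap Q_R} + (R-r)^{-2}\int_{Q_R}h^2\,dz.
\eeqs

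Next, I pick a smooth product cutoff $\chi(t,x,v) \in C_c^\infty$ equal to $1$ on $Q_r$ and vanishing outside $Q_{r_1}$, with derivatives obeying the anisotropic kinetic scaling $\abs{\partial_t\chi}\sim(R-r)^{-2s}$, $\abs{\nabla_x\chi}\sim(R-r)^{-(1+2s)}$, $\abs{\nabla_v\chi}\sim(R-r)^{-1}$. Setting $g := \chi f$ and extending by zero, and writing the sub-solution inequality as $\mathcal T f = \mathcal L f + h - m$ for a non-negative Radon measure $m$, a direct computation gives
\beqs
	\mathcal T g + (-\Delta)^s_v g = \chi h + f\,\mathcal T\chi + H - \chi m \quad \textrm{on } (-r^{2s}, 0]\times \R^{2d},
\eeqs
with $H := \chi\mathcal L f + (-\Delta)^s_v g$ and initial data $g(-r^{2s},\cdot,\cdot)$ bounded in $L^2$ by $\norm{f_0}_{L^2(Q_R^{-r^{2s}})}$.

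The crux is to rewrite the right-hand side in the form $h_1 + (-\Delta)^{s/2}_v h_2 - m'$ required by Proposition \ref{prop:2.2}. The term $\chi h$ is directly in $L^1\cap L^2$ with $L^2$-norm $\leq \norm{h}_{L^2(Q_R)}$, while $\norm{f\,\mathcal T\chi}_{L^2}\lesssim_{v_0}(R-r)^{-(1+2s)}\abs{\{f>0\}\cap Q_R}^{1/2}$ since $f\leq 1$. For the nonlocal contribution $H$, I test against $\varphi\in H_0^s(B_{r_1})$ and use integration by parts to get
\beqs
	\int H\,\varphi\,dv = -\mathcal E(f,\chi\varphi) + \int(-\Delta)^{s/2}_v g\cdot(-\Delta)^{s/2}_v\varphi\,dv.
\eeqs
Theorem \ref{thm:4.1} applied to the $B_{r_2}$-restriction of $f$, together with the tail lemma immediately following Theorem \ref{thm:4.1} (to treat the contribution of $f$ outside $B_{r_2}$ using $\norm{f}_{L^\infty}\leq 1$), yields $\abs{\int H\,\varphi\,dv}\lesssim (\norm{f}_{H^s_v(\R^d)+L^\infty(\R^d\setminus B_{r_2})}+\norm{g}_{H^s_v})\norm{\varphi}_{H^s_v}$, hence $\norm{H}_{L^2_{t,x}H^{-s}_v}\lesssim E^{1/2}$. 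The standard identification $H^{-s}_v = L^2_v + (-\Delta)^{s/2}_v L^2_v$ then writes $H = h_1^{(H)} + (-\Delta)^{s/2}_v h_2^{(H)}$ with both $L^2$-norms controlled by $E^{1/2}$. The total mass of $\chi m$ is controlled via the equality $m = \mathcal L f + h - \mathcal T f$ tested against $\chi$ and the same energy bounds.

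Feeding this decomposition into Proposition \ref{prop:2.2} yields $\norm{g}_{L^p}\lesssim E^{1/2}$, and since $g = f$ on $Q_r$ this implies \eqref{eq:2.5}. The same decomposition together with the $L^1_{t,v}W^{\sigma,1}_x$ part of Proposition \ref{prop:2.2}, combined with Cauchy--Schwarz on the bounded support of $g$ to pass from $L^2$ to $L^1$ norms, gives \eqref{eq:2.6}. The main obstacle is the control of $H$: neither $\chi\mathcal L f$ nor $(-\Delta)^s_v g$ individually matches the Kolmogorov source form, and it is only through the $H^{-s}_v$ boundedness furnished by Theorem \ref{thm:4.1} that their sum admits the required split decomposition. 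Tracking the precise powers of $(R-r)^{-1}$ required by \eqref{eq:2.5}-\eqref{eq:2.6} is routine but demands care under the anisotropic kinetic scaling.
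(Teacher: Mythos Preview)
Your approach is essentially the same as the paper's: localize $f$ by a smooth cutoff, rewrite the resulting equation as a fractional Kolmogorov equation with a source in $L^2_{t,x}H^{-s}_v$ plus a non-negative defect measure, control both via the energy estimate and Theorem~\ref{thm:4.1}, and then invoke Proposition~\ref{prop:2.2}. The organisation differs only in bookkeeping. The paper introduces the defect measure at the level of the localized function $g$ after first invoking the Imbert--Silvestre observation $\mathcal T f \leq (\mathcal L f)\chi_{\{f>0\}} + h$, and uses a second cutoff $\varphi_2$ to separate the tail of $(-\Delta)^s_v g$ from the compactly supported part of the source; you instead pull the defect measure $m$ directly from the sub-solution inequality at the level of $f$ and then multiply by $\chi$, and you fold the tail of $(-\Delta)^s_v g$ into the $H^{-s}_v$ estimate for $H$ without a second cutoff. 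Both routes are valid and yield the same bounds; the paper's two-cutoff decomposition makes the $L^2$ control of the far-field piece of $(-\Delta)^s_v g$ more explicit, while your single-cutoff argument is slightly more compact but requires you to test $H$ against all $\varphi\in H^s(\R^d)$ (not just those supported in $B_{r_1}$), which still goes through since $\chi\mathcal L f$ has compact support and $(-\Delta)^s_v g\in H^{-s}(\R^d)$ globally.
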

\begin{proof}
We first recall a clever observation made by Imbert and Silvestre \cite{IS}. If $f(t, x, v) = 0$ then $\mathcal Tf = 0$ a.e. and $\mathcal Lf(v) \geq 0$. In particular any sub-solution $f$ of \eqref{eq:1.1} satisfies
\beqs
	\mathcal T f \leq \mathcal L f - \Bigg(\int_{\R^d} f(w) K(v, w) \dd w\Bigg) \chi_{\{f = 0\}} + h = \Big(\mathcal L f\Big)\chi_{\{f > 0\}} + h.
\eeqs
Now we write $\mathcal I := (-{R}^{2s} + t_0, t_0]$ and for $t \in \mathcal I$ we let $\varphi_1: \R^{2d} \to [0, 1]$ be a smooth function such that $\varphi_1 = 1$ on $Q_r^{t}(z_0):= B_{r^{1+2s}}(x_0+(t-t_0)v_0)\times B_r(v_0)$ and $\varphi_1 = 0$ outside $Q_{\frac{R+r}{4}}^t(z_0)$. Consider $g := f\varphi_1$. There holds
\beqs
	\partial_t g + v\cdot \nabla_x g + (-\Delta)_v^s g =  \big(\mathcal T f\big) \varphi_1 + f \big(\mathcal T \varphi_1\big) + (-\Delta)_v^s g \leq\big(\mathcal L f\big)\chi_{\{f>0\}}\varphi_1 + h\varphi_1 + f \big(\mathcal T \varphi_1\big) + (-\Delta)_v^s g.
\eeqs
Note that we have by the energy estimate \ref{prop:9}
\bals
	\norm{f}^2_{L^2_{t, x}\dot H^s_v(Q_{\frac{R+r}{4}}(z_0))} \leq CN,
\eals	
where $$N = \int_{Q_R^t(z_0)}f_0^2(x, v)\dd x \dd v  + \mathcal C\Bigg(\frac{R+r}{4}, R\Bigg) \Abs{\{f > 0\} \cap Q_R(z_0)} + \int_{Q_R(z_0)} h^2\dd z$$ with $\mathcal C(r, R) = (R-r)^{-2}$. 
Moreover, since $f \in H^s_v$ with support in $Q_R(z_0)$ we know by Theorem \ref{thm:4.1} that $\mathcal L f \in H^{-s}_v(\R^d)$ and
\bals
	\int_{B_{\frac{R+r}{4}(v_0)}} \big(\mathcal L f\big)\chi_{\{f>0\}}\varphi_1(v) \dd v &= \int_{B_{\frac{R+r}{4}(v_0)}}\int_{B_{\frac{r}{4}(v)}} \dots \dd w \dd v + \int_{B_{\frac{R+r}{4}(v_0)}}\int_{\R^d \setminus B_{\frac{r}{4}(v)}} \dots\dd w \dd v\\
	&\leq C\mathcal C^{1/2}\Bigg(r, \frac{R+r}{4}\Bigg)\norm{f}_{H^s_v(B_{\frac{R+r}{2}(v_0)})}\\
	&\quad + C \Lambda \Bigg(\frac{r}{4} + v_0\Bigg)^{-2s}\int_{B_{\frac{R+r}{4}(v_0)}}\chi_{\{f>0\}}\varphi_1(v)\dd v.
\eals
Further we have
\bals
	\norm{g}^2_{L^2_{t, x}\dot H^s_v(\mathcal I\times\R^{2d})} &\leq C\norm{f}^2_{L^2_{t, x}\dot H^s_v(Q_{\frac{R+r}{4}}(z_0))} + C \Big(\frac{r}{4} + v_0\Big)^{-2s}\norm{g}_{L^2}^2\\
	&\leq C\norm{f}^2_{L^2_{t, x}\dot H^s_v(Q_{\frac{R+r}{4}}(z_0))} + C \Big(\frac{r}{4} + v_0\Big)^{-2s}\norm{f}^2_{L^2(Q_R(z_0)}.
\eals	
Therefore $g$ satisfies 
\bals
	\partial_t g + v\cdot \nabla_x g + (-\Delta)_v^s g &\leq \big(\mathcal L f\big)\chi_{\{f>0\}}\varphi_1 + h\varphi_1 + f \big(\mathcal T \varphi_1\big) + (-\Delta)_v^{s}g =: H,
\eals
with right hand side $H = \big(\mathcal L f\big)\chi_{\{f>0\}}\varphi_1+ h\varphi_1+ f \big(\mathcal T \varphi_1\big) + (-\Delta)_v^{s} g \in L^2_{t, x}H^{-s}_v(\mathcal I\times \R^{2d})$. 
The energy estimate \ref{prop:9} implies
\bals
	\norm{H}^2_{L^2_{t, x}H^{-s}_v(\mathcal I\times \R^{2d})} &\leq C \mathcal C\Big(r, \frac{R+r}{4}\Big)\norm{f}^2_{L^2_{t, x}(\dot H^s_v(Q_{\frac{R+r}{4}}(z_0)))}+ C\Big(\frac{r}{4} + v_0\Big)^{-4s}\Abs{\{f > 0\} \cap Q_R(z_0)} \\
	&\quad+ \norm{h}^2_{L^2(Q_R(z_0))}+ \norm{g}^2_{L^2_{t, x}(\dot H^s_v(\mathcal I\times\R^{2d}))} + \frac{(\abs{v_0} + 1)^2}{(R- (\frac{R+r}{4}))^{4s}}\norm{f}^2_{L^2(Q_R(z_0))}\\
	&\lesssim_{\lambda,\Lambda, v_0} \mathcal C\Big(r, \frac{R+r}{4}\Big)N + \mathcal C^2\Big(r,  \frac{R+r}{4}\Big)\norm{f}^2_{L^2(Q_R(z_0))}.
\eals

It remains to quantify the measure of the difference between solution and sub-solution. 
Note that for $(x, v)$ outside $Q_{\frac{R+r}{4}}^{t}(z_0)$ there holds $g = 0$ and thus $\mathcal T g = 0$. Moreover, outside $Q_{\frac{R+r}{4}}^{t}(z_0)$
\bals
	\Abs{(-\Delta)_v^s g(t, x, v)} &= C \Bigg\lvert{\int_{B_{r+\frac{R-r}{2}}(v_0)}} g(t, x, w)\abs{w - v}^{-(d+2s)}\dd w\Bigg\rvert \\
	&\leq C \Bigg(\abs{v} - v_0 - \frac{R+r}{2}\Bigg)^{-d - 2s} \abs{B_{\frac{R+r}{4}}(v_0)} \chi_{\Big\{\abs{x} \leq \big(\frac{R+r}{4}\big)^{1+2s} + x_0 + (t-t_0)v_0\Big\}}.
\eals
Thus $\mathcal T g + (-\Delta)_v^s g =  (-\Delta)_v^s g \in L^2$ outside $B_{(\frac{R+r}{4})^{1+2s}}(x_0 + (t-t_0)v_0) \times B_{\frac{R+r}{2}}(v_0)$. 
Consider now a second smooth cut-off function $0 \leq \varphi_2\leq 1$ such that $\varphi_2 = 1$ in $B_{(\frac{R+r}{4})^{1+2s}}(x_0+(t-t_0)v_0)\times B_{\frac{R+r}{2}}(v_0)$ and $ \varphi_2 = 0$ outside $Q_{\frac{3(R+r)}{4}}^{t}(z_0)$. 
We let 
\beqs
	\tilde H = H \varphi_2 + (1- \varphi_2)(-\Delta)_v^s g.
\eeqs
We claim $\tilde H \in L^2_{t, x}H^{-s}_v(\mathcal I\times \R^{2d})$. Indeed $(1- \varphi_2)(-\Delta)_v^s g \in L^2_{t, x}H^{-s}_v$. Moreover, if a function $a\in H^s$ then also $a \varphi_2 \in H^s$. Thus by duality it also follows that $H\varphi_2 \in L^2_{t, x}H^{-s}_v$. 
With this definition there holds in $\mathcal I \times \R^{2d}$ 
\beqs
	\mathcal T g + (-\Delta)_v^s g \leq \tilde H
\eeqs
with equality outside $Q_{\frac{3(R+r)}{4}}^{t}(z_0)$.

Let $m$ be a non-negative measure whose support is contained in $\mathcal I \times Q_{\frac{3(R+r)}{4}}^{t}(z_0)$ given by 
\beqs
	m := \tilde H - \mathcal T g - (-\Delta)_v^sg.
\eeqs
To control this measure, we test it against a test function $\varphi_3$ that is equal to $1$ in the support of $m$ such that $\varphi_3$ vanishes outside $Q_R^{t}(z_0)$. We get
\bals
	\norm{m}_{M^1(\mathcal I\times \R^{2d})} &= \int_{\mathcal I\times \R^{2d}} \varphi_3 \dd m = \int_{\mathcal I\times \R^{2d}} \varphi_3 \big(\tilde H - \mathcal T g - (-\Delta)_v^sg\big)\dd z\\
	&= \int_{\R^{2d}}\Big(g(t_0, x, v) - g\big(t_0-R^{2s}, x, v\big)\Big)\dd x \dd v \\
	&\qquad+ \int_{\mathcal I\times \R^{2d}} \Big( v\cdot \nabla_x \varphi_3  - (-\Delta)_v^s \varphi_3\Big) g + \tilde H\varphi_3 \dd z\\
	&\lesssim \mathcal C\Bigg(\frac{3(r+R)}{4}, R\Bigg)\norm{g}_{L^1(\mathcal I\times \R^{2d})} + \norm{\tilde H}_{L^2_{t, x}H^{-s}_v}\\
	&\lesssim  \mathcal C\Bigg(\frac{3(r+R)}{4}, R\Bigg) \norm{f}_{L^1(Q_R(z_0))} + \norm{H}_{L^2_{t, x}H^{-s}_v(\mathcal I\times \R^{2d})}.
\eals

Thus $g$ solves the fractional Kolmogorov equation with right hand side
\beqs
	\mathcal T g + (-\Delta)_v^sg = \tilde H - m.
\eeqs
Using Lemma \ref{prop:2.2} we deduce
\bals
	\norm{g}^2_{L^p(\mathcal I\times \R^{2d})} \lesssim \norm{\tilde H}^2_{L^2_{t,x}H^{-s}_v(\mathcal I\times \R^{2d})}\leq \norm{H}^2_{L^2_{t,x}H^{-s}_v(\mathcal I\times \R^{2d})}.
\eals
and
\bals
	\norm{g}^2_{L^1_{t, v}W_x^{\sigma, 1}(\mathcal I\times \R^{2d})} &\lesssim \norm{\tilde H}^2_{L^2_{t,x}H^{-s}_v(\mathcal I\times \R^{2d})} + \norm{m}^2_{M^1(\mathcal I\times \R^{2d})}\\
	&\lesssim \mathcal C^2\Bigg(\frac{3(r+R)}{4}, R\Bigg) \norm{f}^2_{L^1(Q_R(z_0))} + \norm{H}^2_{L^2_{t, x}H^{-s}_v(\mathcal I\times \R^{2d})}.
\eals	
We conclude by noting that $0 \leq f \leq 1$ a.e. so that the $L^1$ and the $L^2$ norm are bounded by $\Abs{\{f > 0\} \cap Q_R(z_0)}$.
\end{proof}

\section{First Lemma of De Giorgi}
\label{subsec:firstDG}
\begin{lemma}\label{lem:6.6}
Let $Q_r(z_0) \subset Q_R(z_0)$ with $0 < r < R \leq 1$ and $t_0 < r^{2s}$. Let $f$ be a non-negative sub-solution of \eqref{eq:1.1}-\eqref{eq:1.2} in $Q_R(z_0)$ with a non-negative kernel $K$ satisfying \eqref{eq:coercivity}-\eqref{eq:cancellation2} for $\bar R \geq 1$. Assume $0 \leq f \leq 1$ a.e. in $(-R^{2s} + t_0, t_0] \times B_{R^{1+2s}}(x_0 + (t-t_0)v_0)\times \R^d$. Then there exists $\varepsilon_0 > 0$ so that for all $\varepsilon < \varepsilon_0$ there holds: if
\beqs
	\int_{Q_R(z_0)} f^2 \dd z\leq \varepsilon,
\eeqs
then for any $2 < p < 2+\frac{2s}{d(1+s)}$ there exists a large constant $C > 1$ depending only on $s, d$ and $p$ such that 
\beqs
	f \leq \min\Bigg\{C\Big[1 + (R-r)\norm{h}_{L^\infty(Q_R(z_0))}\Big]^{\frac{\beta_1}{2}}(R-r)^{-\beta_1}\varepsilon^{\beta_2}, \frac{1}{2}\Bigg\}\quad a.e. \textrm{ in } Q_r(z_0),
\eeqs
where $\beta_1 =  \frac{2p}{2p-2} > 0$ and $\beta_2 = \frac{p-2}{2(2p-2)} > 0$.
\end{lemma}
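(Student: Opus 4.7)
The proof is a De Giorgi iteration, adapted to the kinetic non-local setting using the integrability gain from Lemma \ref{prop:11}. Fix shrinking radii $r_k := r + (R-r)\,2^{-k}$ (so $r_0 = R$, $r_k \searrow r$) and increasing levels $M_k := M(1 - 2^{-k})$ for a parameter $M > 0$ chosen at the end, and define truncations $f_k := (f - M_k)_+$. Since $0 \le f \le 1$, each $f_k$ is bounded by $1$, and one checks pointwise that $\mathcal L(f - M_k)_+ \geq \chi_{\{f > M_k\}}\mathcal Lf$ (because $M_k$ is constant), so $f_k$ is again a non-negative sub-solution of \eqref{eq:1.1}-\eqref{eq:1.2} with source $h$. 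I track two quantities simultaneously: the energy $U_k := \|f_k\|_{L^2(Q_{r_k}(z_0))}^{2}$ and the super-level measure $A_k := |\{f > M_k\} \cap Q_{r_k}(z_0)|$.

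Three inequalities drive the recursion. Chebyshev at level $M_{k+1} - M_k = M\,2^{-(k+1)}$ gives $A_{k+1} \leq C\,4^k M^{-2} U_k$. Lemma \ref{prop:11} applied to $f_k$ on the pair $Q_{r_{k+1}} \subset Q_{r_k}$, together with $f_k \le 1$ and an auxiliary use of Lemma \ref{prop:9} to absorb the initial-slice contribution (see the obstacle below), yields
\[
\|f_k\|_{L^p(Q_{r_{k+1}})}^{2} \;\lesssim\; 2^{Ck}(R-r)^{-4}\Bigl(A_k + (R-r)^{2}\|h\|_{L^\infty(Q_R(z_0))}^{2}\Bigr).
\]
H\"older with exponents $p/2$ and $p/(p-2)$ applied to $U_{k+1} \leq \int_{Q_{r_{k+1}}} f_k^{2}\,\chi_{\{f > M_{k+1}\}}$ gives $U_{k+1} \leq \|f_k\|_{L^p(Q_{r_{k+1}})}^{2}\,A_{k+1}^{(p-2)/p}$. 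Combining these three (and re-applying Chebyshev at step $k-1$ to bound $A_k$ in terms of $U_{k-1}$) produces a nonlinear recursion of the form
\[
U_{k+1} \leq C\,2^{Ck}\,(R-r)^{-4}\,M^{-4(p-1)/p}\,\bigl(U_{k-1} + \Theta\bigr)\,U_k^{(p-2)/p},
\]
where $\Theta := (R-r)^{2} M^{2}\,\|h\|_{L^\infty(Q_R(z_0))}^{2}$ encodes the source contribution.

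A classical nonlinear iteration lemma shows $U_k \to 0$ provided $U_0 + \Theta$ lies below an explicit threshold of the form $c\,M^{2p/(p-2)}(R-r)^{4p/(p-2)}$. Solving the threshold condition for $M$ (with a case distinction between $\varepsilon \geq \Theta$ and $\varepsilon < \Theta$ to produce the multiplicative $[1 + (R-r)\|h\|_{L^\infty}]$ factor) gives exactly
\[
M \;\geq\; C\bigl[1 + (R-r)\|h\|_{L^\infty(Q_R(z_0))}\bigr]^{\beta_1/2}(R-r)^{-\beta_1}\varepsilon^{\beta_2},
\]
with $\beta_1 = p/(p-1)$ and $\beta_2 = (p-2)/(4(p-1))$, matching the statement. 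Taking $M$ equal to this threshold (capped at $1/2$, which is automatic since $f \leq 1$) forces $U_k \to 0$, hence $|\{f > M\} \cap Q_r(z_0)| = 0$, i.e.\ $f \leq M$ a.e.\ in $Q_r(z_0)$.

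The main technical delicacy is the use of Lemma \ref{prop:11} in the recursion: the initial-data slice $\int_{Q_{r_k}^{\tau}} f_k^{2}\,dx\,dv$ produced on its right-hand side is a $2d$-dimensional integral, not directly dominated by the $(1+2d)$-dimensional measure $A_k$. The resolution is to shrink the time-interval together with the spatial radius at each iteration step, so that the initial time of $Q_{r_{k+1}}$ lies strictly inside the time interval of $Q_{r_k}$, and to invoke Lemma \ref{prop:9} on an intermediate cylinder; combined with $f_k \leq 1$ (which turns the slice $L^2$-integral into a slice measure absorbed into $A_k$ via a covering in time), this closes the iteration. The remaining algebraic book-keeping of powers in the recursion, producing the exponents $\beta_1$ and $\beta_2$, is mechanical once the form of the recursion is established.
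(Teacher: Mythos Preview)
Your overall strategy is correct and matches the paper: a De~Giorgi iteration using the $L^2\to L^p$ gain from Lemma~\ref{prop:11}, Chebyshev, and H\"older. You also correctly flag the initial-slice difficulty; the paper resolves it by the same mechanism you describe, stated more sharply: pick $t_{k+1/2}\in[t_k,t_{k+1}]$ by the mean-value inequality so that the slice integral at $t_{k+1/2}$ is bounded by $(t_{k+1}-t_k)^{-1}A_k\lesssim 2^k A_k$.

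There is, however, a genuine gap in your treatment of the source. You write that $f_k=(f-M_k)_+$ is a sub-solution ``with source $h$'' and then carry the source through the $L^p$ bound as the constant $(R-r)^2\|h\|_{L^\infty}^2$, producing the additive $\Theta=M^2(R-r)^2\|h\|_{L^\infty}^2$ in your two-step recursion
\[
U_{k+1}\;\leq\;C\,2^{Ck}(R-r)^{-4}M^{-4(p-1)/p}\bigl(U_{k-1}+\Theta\bigr)\,U_k^{(p-2)/p}.
\]
Once $U_{k-1}\ll\Theta$ this becomes $U_{k+1}\lesssim 2^{Ck}\,U_k^{\gamma}$ with $\gamma=(p-2)/p<1$, which is \emph{sub}-linear; taking logarithms gives a linear recursion $v_{k+1}\le \mathrm{const}+Ck+\gamma v_k$ whose solution grows rather than tending to $-\infty$. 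No ``classical nonlinear iteration lemma'' rescues this: the geometric factor $2^{Ck}$ defeats a sub-linear exponent.

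The fix, which the paper uses, is that the source for $(f-M_{k+1})_+$ is $\tilde h=\chi_{\{f>M_{k+1}\}}h$, not $h$. Then $\|\tilde h\|_{L^2(Q_{r_k})}^2\le \|h\|_{L^\infty}^2\,\bigl|\{f>M_{k+1}\}\cap Q_{r_k}\bigr|\le C\,2^{2k}M^{-2}\|h\|_{L^\infty}^2\,A_k$ by Chebyshev, so the source contribution is itself proportional to $A_k$. This collapses your recursion to a genuinely super-linear one-step form
\[
A_{k+1}\;\lesssim\;2^{Ck}(R-r)^{-4}\bigl[1+(R-r)^2\|h\|_{L^\infty}^2\bigr]\,M^{-2(2p-2)/p}\,A_k^{\,1+(p-2)/p},
\]
with exponent $1+(p-2)/p>1$, which does drive $A_k\to 0$ and yields the stated $\beta_1,\beta_2$.
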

\begin{proof}[Proof of Lemma \ref{lem:6.6}]
We do a De Giorgi iteration, as was done in \cite[Lemma 6.6]{IS} and \cite[Lemma 3.8]{JG}. For an intuitive description of this technique, we refer the reader to \cite{cafvas}. Let $\tilde \tau := r^{2s} - t_0$ and $\hat \tau := R^{2s} - t_0$. Then $0 < \tilde \tau < \hat \tau$. Let $L > 0$ to be determined and consider 
\bals
	&l_k = L(1 - 2^{-k}),\\
	&r_k = r + 2^{-k}(R - r),\\
	&t_k = -\tilde \tau - 2^{-k}(\hat \tau - \tilde \tau).
\eals
Define $Q_k(z_0) = (t_k, t_0] \times B_{r_k^{1 + 2s}}(x_0 + (t_k-t_0)v_0) \times B_{r_k}(v_0)$ and
\beqs
	A_k := \int_{Q_k(z_0)} (f - l_k)^2_+ \dd z.
\eeqs
Then $A_0 \leq \varepsilon_0$ by assumption. We want to prove that $A_k \to 0$ as $k \to +\infty$. This then proves the lemma.

We observe that $(f - l_{k+1})_+$ is a sub-solution of \eqref{eq:1.1}-\eqref{eq:1.2} with source term 
\beqs
	\tilde h(t, x, v) = \chi_{\{f(v) > l_{k+1}\}}h(t, x, v),
\eeqs
and such that $0 \leq (f - l_{k+1})_+ \leq 1$ a.e. in $(-R^{2s} + t_0, t_0] \times B_{R^{1+2s}}(x_0 + (t-t_0)v_0)\times \R^d$. 
With Chebyschev's inequality we can estimate
\bal
	\Abs{\{f > l_{k+1}\} \cap Q_{k}(z_0)} = \Abs{\big\{(f - l_k)_+> 2^{-k-2}L\big\} \cap Q_{k}(z_0)} \leq 2^{2k+4}L^{-2} A_k.
\label{eq:chebyschevaux}
\eal
Thus
\beqs
	\norm{\tilde h}^2_{L^2(Q_k(z_0))} \leq C\norm{h}^2_{L^\infty(Q_R(z_0))} 2^{2k+4}L^{-2}A_k.
\eeqs
We now pick $t_{k+\frac{1}{2}} \in [t_k, t_{k+1}]$ such that
\bals
	\int_{Q_k^{t_{k+\frac{1}{2}}}(z_0)} (f-l_{k})_+^2 \dd x\dd v \leq \frac{1}{t_{k+1}-t_k}\int_{Q_k(z_0)} (f-l_{k})_+^2\dd z \leq C2^kA_k.
\eals
Then we apply Lemma \ref{prop:11} and \eqref{eq:chebyschevaux} to find
\bal\label{eq:DG_aux}
	\Bigg(\int_{t_{k+\frac{1}{2}+t_0}}^{t_0}&\int_{Q^t_{k+1}(z_0)}(f - l_{k+1})_+^p \dd z\Bigg)^{\frac{2}{p}} \\
	&\leq \frac{2^{4k+4}}{(R-r)^{4}}\Abs{\{f > l_{k+1}\} \cap Q_{k}} +C \frac{2^k}{(R-r)^2} A_k+ C\norm{h}^2_{L^\infty(Q_R(z_0))} \frac{2^{2k+4}}{(R-r)^{2}L^2}A_k\\
	&\leq \frac{2^{6k+8}}{(R-r)^{4}L^{2}}A_k +C \frac{2^k}{(R-r)^2} A_k + C\norm{h}^2_{L^\infty(Q_R(z_0))} \frac{2^{2k+4}}{(R-r)^{2}L^2}A_k\\
	&\leq C2^{6k}(R-r)^{-4}L^{-2}\left[1 + (R-r)^2\norm{h}^2_{L^\infty(Q_R(z_0))}\right]A_k,
\eal
where we used $l_{k+1} \geq l_k$. 

Using $Q_{k+1} \subset Q_k$ and Chebyschev's inequality we get
\bals
	A_{k+1} &\leq \Bigg(\int_{Q_{k+1}(z_0)} (f - l_{k+1})^p_+ \dd z\Bigg)^{\frac{2}{p}} \abs{\{f > l_{k+1}\}\cap Q_{k+1}(z_0)}^{1-\frac{2}{p}}  \\
	&\lesssim_{s, v_0} 2^{6k} (R-r)^{-4}L^{-2}\left[1 + (R-r)^2\norm{h}^2_{L^\infty(Q_R(z_0))}\right]A_k\Abs{\big\{f > l_{k+1}\big\}\cap Q_{k}(z_0)}^{1-\frac{2}{p}}  \\
	&\lesssim_{s, v_0}2^{6k} (R-r)^{-4}L^{-2}\left[1 + (R-r)^2\norm{h}^2_{L^\infty(Q_R(z_0))}\right]A_k\Big(2^{2k + 4} L^{-2}A_k\Big)^{1-\frac{2}{p}}\\
	&\lesssim_{s, v_0} 2^{8k}(R-r)^{-4} \left[1 + (R-r)^2\norm{h}^2_{L^\infty(Q_R(z_0))}\right]L^{-2(1+\frac{p-2}{p})}A_k^{1 + \frac{p-2}{p}}.
\eals
To determine how large we can pick $L$, we consider the sequence $A_k^* := A_0 Q^{-k}$ for some $Q > 1$ to be determined and for $k \geq 0$. Then we require $A_k^*$ to satisfy the reverse non-linear recurrence, that is
\beq
	A_{k+1}^* \geq2^{8k}(R-r)^{-4} \left[1 + (R-r)^2\norm{h}^2_{L^\infty(Q_R(z_0))}\right]L^{-2(1+\frac{p-2}{p})}A_k^{1 + \frac{p-2}{p}}.
\label{eq:reverse}
\eeq
Equivalently,
\beqs
	1 \geq  2^{8k}(R-r)^{-4} \left[1 + (R-r)^2\norm{h}^2_{L^\infty(Q_R(z_0))}\right]L^{-2(1+\frac{p-2}{p})}A_0^{\frac{p-2}{p}} Q^{-k(\frac{p-2}{p})}Q.
\eeqs
Choose $Q$ such that $2^8Q^{-\frac{p-2}{p}} \leq 1$ or 
\beqs
	2^{8} \leq Q^{\frac{p-2}{p}}.
\eeqs
Then \eqref{eq:reverse} holds if 
\beqs
	L^{2(1+\frac{p-2}{p})}\geq (R-r)^{-4}\left[1 + (R-r)^2\norm{h}^2_{L^\infty(Q_R(z_0))}\right]A_0^{\frac{p-2}{p}} Q.
\eeqs
Thus, if we choose
\beqs
	L=   A_0^{\frac{p-2}{2(2p-2)}} (R-r)^{-\frac{2p}{2p-2}}2^{\frac{4p^2}{(p-2)(2p-2)}}\left[1 + (R-r)\norm{h}_{L^\infty(Q_R(z_0))}\right]^{\frac{p}{2p-2}},
\eeqs
then, since $\frac{p-2}{p} > 0$ and $A_0 \leq \varepsilon$, we deduce $A_k \to 0$ as $k \to +\infty$.
In particular, for almost every $z \in Q_r(z_0)$
\beqs
	f \leq L.
\eeqs
And therefore for almost every $z \in Q_r(z_0)$ there holds
\bals
	f &\leq   A_0^{\frac{p-2}{2(2p-2)}} (R-r)^{-\frac{2p}{2p-2}}2^{\frac{4p^2}{(p-2)(2p-2)}}\left[1 + (R-r)\norm{h}_{L^\infty(Q_R(z_0))}\right]^{\frac{p}{2p-2}}\\
	&=  \norm{f}_{L^2(Q_R(z_0))}^{\frac{p-2}{2p-2}} (R-r)^{-\frac{2p}{2p-2}}2^{\frac{4p^2}{(p-2)(2p-2)}}\left[1 + (R-r)\norm{h}_{L^\infty(Q_R(z_0))}\right]^{\frac{p}{2p-2}}.
\eals
\end{proof}
\begin{remark}
First we remark that if instead of $f \leq 1$ a.e. we just assume $f$ to be essentially bounded, then the gain of integrability in Lemma \ref{prop:11} would yield
\bals
	\norm{f}^2_{L^p(Q_r(z_0))} &\lesssim_{\lambda, \Lambda, v_0}(R-r)^{-2}\int_{Q_R^t(z_0)} f^2_0(x, v) \dd x\dd v \\
	&\qquad+ (R-r)^{-4}\left(\sup_{z \in \Omega_{t, x}(R, z_0)\times \R^d} f\right)^2 \Abs{\{f > 0\} \cap Q_R(z_0)} + (R-r)^{-2}\int_{Q_R(z_0)} h^2\dd z,
\eals
where $\Omega_{t, x}(R, z_0) := (-R^{2s} +t_0, t_0]\times B_{R^{1+2s}}(z_0)$.
Using this we can further note that the nonlinear relation between the $L^\infty$ and the $L^2$ norm of the function can be linearised at the price of a non-local tail term. Indeed, going back to \eqref{eq:DG_aux}, if for some $\delta \in (0, 1]$ we choose $L$ such that 
\beqs
	L^2 \geq \delta\left(\sup_{z \in \Omega_{t, x}(R, z_0)\times \R^d} f + (R-r)\norm{h}_{L^\infty(Q_R(z_0))}\right)^2,
\eeqs
then
\bals
	\Bigg(\int_{t_{k+\frac{1}{2}+t_0}}^{t_0}&\int_{Q^t_{k+1}(z_0)}(f - l_{k+1})_+^p \dd z\Bigg)^{\frac{2}{p}} \leq C 2^{6k} (R-r)^{-4}\delta^{-1}A_k.
\eals
Thus we deduce
\bals
	A_{k+1} &\leq \Bigg(\int_{Q_{k+1}(z_0)} (f - l_{k+1})^p_+ \dd z\Bigg)^{\frac{2}{p}} \abs{\{f > l_{k+1}\}\cap Q_{k+1}(z_0)}^{1-\frac{2}{p}}  \\
	&\lesssim_{s, v_0}2^{6k} (R-r)^{-4}\delta^{-1}A_k\big(2^{2k + 4} L^{-2}A_k\big)^{1-\frac{2}{p}}\\
	&\lesssim_{s, v_0} 2^{8k}(R-r)^{-4} \delta^{-1}L^{-\frac{2(p-2)}{p}}A_k^{1 + \frac{p-2}{p}}.
\eals
The same barrier argument as before shows that we have to choose $L$ such that
\beqs
	L^{2(\frac{p-2}{p})}\geq (R-r)^{-4}\delta^{-1}A_0^{\frac{p-2}{p}} Q.
\eeqs
Therefore, we pick
\beqs
	L = \delta^{\frac{1}{2}}\Bigg(\sup_{z \in \Omega_{t, x}(R, z_0)\times \R^d} f + (R-r)\norm{h}_{L^\infty(Q_R(z_0))}\Bigg) +  A_0^{\frac{1}{2}} (R-r)^{-\frac{2p}{p-2}}2^{\frac{4p^2}{(p-2)^2}}\delta^{-\frac{p}{2(p-2)}},
\eeqs
and we deduce that
for almost every $z \in Q_r(z_0)$ there holds
\bal\label{eq:aux-proper-SH}
	f \leq C(R-r)^{-\frac{2p}{p-2}}\delta^{-\frac{p}{2(p-2)}}\norm{f}_{L^2(Q_R(z_0))}+\delta^{\frac{1}{2}}\sup_{z \in \Omega_{t, x}(R, z_0)\times \R^d} f + \delta^{\frac{1}{2}}(R-r)\norm{h}_{L^\infty(Q_R(z_0))}.
\eal
To derive a linear relation between the $L^\infty$ and the $L^2$ norm of $f$, we would need to absorb the non-local tail term on the left hand side. Thus we require an estimate of the tail in terms of local quantities. If we were able to replace the tail by the essential supremum on any finite ball in the velocity domain, then using a standard covering argument, see e.g. \cite[Lemma 8.18]{GiaquintaMartinazzi}, we would have obtained 
for almost every $z \in Q_r(z_0)$ 
\bals
	f \leq  C(R-r)^{-\frac{2p}{p-2}}\norm{f}_{L^2(Q_R(z_0))} +C\norm{h}_{L^\infty(Q_R(z_0))}.
\eals
Combined with the Weak Harnack inequality \eqref{eq:weakH}, this would allow us to derive the (proper) Strong Harnack inequality, in the sense that
\beqs
	\sup_{\tilde Q^-_{\frac{r_0}{4}}(z_0)} f \leq C \inf_{Q_{\frac{r_0}{4}}(z_0)} f +C\norm{h}_{L^\infty(Q_R(z_0))}.
\eeqs
We refer to Section \ref{subsec:harnack} below. Note, however, that it is not clear that the non-local tail in \eqref{eq:aux-proper-SH} can be bounded by local quantities.
\end{remark}

\section{Second Lemma of De Giorgi}
\label{subsec:secondDG}
\subsection{Weak Poincaré} This is where we move along trajectories in order to obtain a hypoelliptic Poincaré-type inequality with an error term. The idea comes from Guerand and Mouhot \cite{JGCM}. This inequality allows us to pick up an intermediate set in De Giorgi's Second Lemma, Theorem \ref{thm:IVL}. The proof of the inequality \eqref{eq:3.1} does not use De Giorgi's First Lemma. 
\begin{proposition}\label{prop:13}
Let $f$ be a non-negative sub-solution to \eqref{eq:1.1}-\eqref{eq:1.2} in $(-9, 0] \times Q^t_3$ with a non-negative kernel $K$ satisfying \eqref{eq:coercivity}-\eqref{eq:cancellation2} for $\bar R = 6$. Given any $\varepsilon \in (0, 1)$ and $0 <\sigma < \frac{s}{1+2s}$ there holds 
\bal
	\big\Vert\big(f - \langle f\rangle_{Q^-_1}\big)_+\big\Vert_{L^1(Q_1)} &\lesssim\frac{1}{\varepsilon^{d+2}} \int_{(-3^{2s}, 0]\times B_{3^{1+2s}} \times \R^d}\Bigg(\int_{\R^d} \abs{f(v) - f(w)}^2 K(v, w)\dd w\Bigg)^{\frac{1}{2}}\dd z  \\
	&\quad +\frac{1}{\varepsilon^{d}} \abs{ \textrm{PV}\int_{(-3^{2s}, 0]\times B_{3^{1+2s}} \times \R^d}\int_{\R^d} f(v)\big(K(v, w) - K(w, v)\big)\dd w \dd z}\\
	&\quad  + \varepsilon^\sigma \norm{f}_{L^1_{t, v}W^{\sigma, 1}_x(Q_2)} + \norm{h}_{L^1(Q_3)},
\label{eq:3.1}
\eal
where $Q^-_1:= Q_1(-2, 0, 0) = (-3, -2] \times Q_1^t$ and $\langle f\rangle_{Q^-_1} = \fint_{Q^-_1} f = \frac{1}{\abs{Q^-_1}}\int_{Q^-_1} f$.
\end{proposition}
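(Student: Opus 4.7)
The plan is to adapt the trajectorial method of Guerand--Mouhot \cite{JGCM}, originally designed for local kinetic equations, to the non-local setting. By convexity of the positive part,
\beqs
	\norm{(f - \langle f\rangle_{Q_1^-})_+}_{L^1(Q_1)} \leq \fint_{Q_1^-}\int_{Q_1} \big(f(z_1) - f(z_2)\big)_+ \dd z_1 \dd z_2,
\eeqs
so it suffices to bound pointwise differences $(f(z_1)-f(z_2))_+$ for $z_1 = (t_1,x_1,v_1) \in Q_1$ and $z_2 = (t_2,x_2,v_2) \in Q_1^-$, suitably averaged. I would connect $z_2$ to $z_1$ by a \emph{broken trajectory} depending on an auxiliary shift parameter $y \in B_1 \subset \R^d$: set $v := (x_1 - x_2 - \varepsilon y)/(t_1-t_2)$ (well-defined since $t_1-t_2 \sim 1$) and split
\bals
	f(z_1) - f(z_2) = &\,\big[f(t_1,x_1,v_1) - f(t_1,x_1,v)\big] + \big[f(t_1,x_1,v) - f(t_2, x_2+\varepsilon y, v)\big]\\
	&+ \big[f(t_2, x_2+\varepsilon y, v) - f(t_2,x_2,v)\big] + \big[f(t_2,x_2,v) - f(t_2,x_2,v_2)\big],
\eals
i.e., a velocity jump at $z_1$, a transport segment with constant velocity $v$, a horizontal $x$-shift of magnitude $\varepsilon$, and a velocity jump at $z_2$. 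By construction the whole path stays in $Q_3$ for small $\varepsilon$.

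Each piece is handled separately. For the \emph{transport piece}, the sub-solution inequality $\mathcal{T}f \leq \mathcal{L}f + h$ integrated along the characteristic gives
\beqs
	f(t_1,x_1,v) - f(t_2,x_2 + \varepsilon y, v) \leq \int_{t_2}^{t_1} \big[\mathcal{L}f + h\big]\big(\tau,\, x_1 + (\tau - t_1)v,\, v\big) \dd \tau,
\eeqs
so the $h$-integral directly contributes the $\norm{h}_{L^1(Q_3)}$ term. After averaging over the trajectory parameters against a smooth weight $\phi$, I split $\mathcal{L}f$ via $K = K^{\mathrm{sym}} + K^{\mathrm{anti}}$ with $K^{\mathrm{anti}}(v,w) = \tfrac{1}{2}(K(v,w) - K(w,v))$. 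Symmetrizing in $(v,w)$ against $\phi$ turns the symmetric part into $\int (\phi(v)-\phi(w))(f(w)-f(v))K^{\mathrm{sym}} \dd w \dd v$, which by Cauchy--Schwarz is dominated by $\int M(z) \dd z$ with $M(z) := (\int |f(v)-f(w)|^2 K(v,w) \dd w)^{1/2}$, yielding the first term of \eqref{eq:3.1}. The anti-symmetric part directly produces $\int f(v)[K(v,w)-K(w,v)] \dd w \dd z$, the second term. The two \emph{velocity-jump pieces} are controlled using the $L^1$-type coercivity \eqref{eq:coercivity-sqrt}: averaging over $v$ in a ball of radius $\sim \varepsilon$ and applying Cauchy--Schwarz converts $\fint |f(v_1) - f(w)| \dd w$ into a quantity comparable to $M(z_1)$. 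Finally, the horizontal \emph{$x$-shift} is handled by the standard fractional Sobolev characterization: $\int |f(t,x+\varepsilon y, v) - f(t,x,v)| \dd y \dd z \lesssim \varepsilon^\sigma \norm{f}_{L^1_{t,v}W^{\sigma,1}_x(Q_2)}$.

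The main obstacle is the non-local analogue of the Poincaré-type inequality of \cite{JGCM}: there a pointwise bound $|\nabla_v f|^2 \lesssim $ energy is available, while in the integro-differential setting we only have the integrated coercivity \eqref{eq:coercivity}--\eqref{eq:coercivity-sqrt}. The role of \eqref{eq:coercivity-sqrt} is precisely to bridge the $L^1$-averaged velocity differences arising from the broken trajectory and the sharper $L^1_z L^2_w$ quantity $M(z)$ --- a direct Cauchy--Schwarz would have produced only the weaker $L^2_{z,w}$ energy. Bookkeeping the $\varepsilon$-powers from the velocity-ball averagings (of radius $\sim\varepsilon$) in both the transport and jump contributions produces the factors $\varepsilon^{-(d+2)}$ and $\varepsilon^{-d}$, while the $\varepsilon^\sigma$ on the $x$-shift piece encodes the trade-off between trajectory flexibility and smoothness.
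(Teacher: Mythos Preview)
Your scheme is exactly the paper's: the same four-step broken trajectory, with the velocity jumps handled via \eqref{eq:coercivity-sqrt} plus Cauchy--Schwarz, the $\varepsilon$-sized $x$-shift via the $W^{\sigma,1}_x$ characterisation, and the transport leg via the sub-solution inequality followed by a symmetric/anti-symmetric splitting of $\int(\mathcal Lf)\,\phi$. The only correction concerns your $\varepsilon$-bookkeeping. The velocity jumps are \emph{not} over balls of radius $\sim\varepsilon$: the intermediate velocity $v=(x_1-x_2-\varepsilon y)/(t_1-t_2)$ ranges over an $O(1)$-ball, and after the natural changes of variable those two pieces are bounded by $\int\big(\int|f(v)-f(w)|^2K\,dw\big)^{1/2}\,dz$ with no $\varepsilon$-dependence at all. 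Every negative power of $\varepsilon$ comes from the transport piece alone: the change of variables from the auxiliary shift $y$ to the characteristic velocity $v$ carries Jacobian $\sim\varepsilon^{-d}$, and after this change the smooth weight depends on $v$ through $y=(x_1-x_2-(t_1-t_2)v)/\varepsilon$, so its $v$-derivatives scale like $\varepsilon^{-1}$ (in the paper's version the weight $\varphi_\varepsilon$ itself already has $\varepsilon^{-1}$ gradients, so this compounds to $\varepsilon^{-2}$). The symmetric half of $\int(\mathcal Lf)\,\phi$ needs that regularity, via Cauchy--Schwarz against $\big(\int|\phi(v)-\phi(w)|^2K\,dw\big)^{1/2}$, which produces the extra factor in $\varepsilon^{-(d+2)}$; the anti-symmetric half only needs $\phi$ bounded and therefore stays at $\varepsilon^{-d}$.
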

\begin{remark}
Note that the right hand side is finite for a solution $f \in L^2\big((-9, 0] \times B_{3^{1+2s}}, L^1_{v, \textrm{loc}}\cap H^s_{v, \textrm{loc}}(\R^d)\big)$ due to Lemma \ref{prop:9} and Lemma \ref{prop:11}. In particular, if $f$ is assumed to be essentially bounded then the right hand side is finite. Indeed, the symmetric non-local term is bounded by the $H^s$ norm of $f$, which is bounded by Lemma \ref{prop:9}, the anti-symmetric part appearing on the right hand side can be bounded by the $L^1$ norm of $f$ due to the cancellation \eqref{eq:cancellation1}, and finally, Lemma \ref{prop:11} bounds the term $\norm{f}_{L^1_{t, v}W^{\sigma, 1}_x(Q_2)}$ by the right hand side of the energy estimate, Lemma \ref{prop:9}. 
\end{remark}

\begin{proof}
Consider for $\varepsilon \in (0, 1)$ a small function $\varphi_\varepsilon = \varphi_\varepsilon(y, w)$, $0 \leq \varphi_\varepsilon \leq 1$ with support in $B_{1^{1+2s}} \times B_1$, so that $\varphi_\varepsilon = 1$ in $B_{(1-\varepsilon)^{1+2s}} \times B_{(1-\varepsilon)}$ and with $\abs{\nabla_y \varphi_\varepsilon}\lesssim \frac{1}{\varepsilon}$,$\abs{\nabla_w \varphi_\varepsilon}\lesssim \frac{1}{\varepsilon}$. We then split
\bals
	\norm{(f - \langle f\rangle_{Q^-_1})_+}_{L^1(Q_1)} &\lesssim \norm{(f - \langle f \varphi_\varepsilon \rangle_{Q^-_1})_+}_{L^1(Q_1)} \\
	&\lesssim \int_{Q_1} \Bigg\{\fint_{Q^-_1} \big[f(t, x, v) - f(s, y, w)\big]\varphi_\varepsilon(y, w)\dd s\dd y\dd w\Bigg\}_+\dd t\dd x\dd v \\
	&\quad + \norm{f}_{L^1(Q_1)}\fint_{Q^-_1} \big(1 - \varphi_\varepsilon(y, w)\big)\dd s\dd y\dd w \\
	&\lesssim  \int_{Q_1} \Bigg\{\fint_{Q^-_1} \big[f(t, x, v) - f(s, y, w)\big]\varphi_\varepsilon(y, w)\dd s\dd y\dd w\Bigg\}_+\dd t\dd x\dd v  \\
	&\quad+ \varepsilon^{d(1+s)}\norm{f}_{L^2(Q_1)},
\eals
where we used $ \langle f \varphi_\varepsilon \rangle_{Q^-_1} \leq \langle f \rangle_{Q^-_1}$ and Cauchy-Schwarz inequality.

Consider the first term. For fixed $t, x, v$ we decompose the trajectory $(t, x, v) \rightarrow (s, y, w)$ into four sub-trajectories in $Q_3$ as follows:
\bals
	(t, x, v) \xrightarrow{\nabla_x} (t, x + \varepsilon w, v) \xrightarrow{\nabla_v^s} \Big(t, x+\varepsilon w, \frac{x+\varepsilon w - y}{t -s}\Big) \xrightarrow{\mathcal T} \Big(s, y,\frac{x+\varepsilon w - y}{t -s}\Big) \xrightarrow{\nabla_v^s} (s, y, w).
\eals
We split the integrand along these trajectories
\bals
	f(t, x, v) - f(s, y, w)&=  \left[f(t, x, v) - f(t, x + \varepsilon w, v)\right]\\
	&\quad+ \left[f(t, x + \varepsilon w, v) - f\left(t, x + \varepsilon w, \frac{x + \varepsilon w - y}{t -s}\right)\right] \\
	&\quad+ \left[f\left(t, x + \varepsilon w, \frac{x + \varepsilon w - y}{t -s}\right) - f\left(s, y,\frac{x + \varepsilon w - y}{t -s}\right)\right] \\
	&\quad+\left[f\left(s, y, \frac{x + \varepsilon w - y}{t -s}\right) - f(s, y, w)\right].
\eals
We integrate against $\varphi_\varepsilon(y, w)$ on $Q^-_1$ yielding
\bals
	I_1(t, x, v) &:= \int_{Q^-_1} \left[f(t, x, v) - f(t, x+\varepsilon w, v)\right] \varphi_\varepsilon(y, w)\dd s\dd y\dd w, \\
	I_2(t, x, v) &:= \int_{Q^-_1} \left[f(t, x+\varepsilon w, v) - f\left(t, x+\varepsilon w, \frac{x+ \varepsilon w - y}{t -s}\right)\right] \varphi_\varepsilon(y, w)\dd s\dd y\dd w, \\
	I_3(t, x, v) &:= \int_{Q^-_1} \left[f\left(t, x+ \varepsilon w, \frac{x+ \varepsilon w - y}{t -s}\right) - f\left(s, y,\frac{x+ \varepsilon w - y}{t -s}\right)\right] \varphi_\varepsilon(y, w)\dd s\dd y\dd w, \\
	I_4(t, x, v) &:= \int_{Q^-_1}\left[f\left(s, y, \frac{x+ \varepsilon w - y}{t -s}\right) - f(s, y, w)\right]\varphi_\varepsilon(y, w)\dd s\dd y\dd w.
\eals

For $I_2$ we find
\bals
	\fint_{Q_1} &\abs{I_2} \dd t \dd x \dd v \\
	&= \fint_{Q_1} \Bigg\vert \int_{Q^-_1} \left[f(t, x+\varepsilon w, v) - f\left(t, x+\varepsilon w, \frac{x+\varepsilon w - y}{t -s}\right)\right]\varphi_\varepsilon(y, w)\dd s\dd y\dd w \Bigg\vert \dd t\dd x\dd v \\
	&\leq \fint_{Q_1}  \int_{Q^-_1} \frac{\Abs{f(t, x+\varepsilon w, v) - f\big(t, x+\varepsilon w, \frac{x+\varepsilon w - y}{t -s}\big)}}{\Abs{v - \big(\frac{x+\varepsilon w - y}{t-s}\big)}^{\frac{d+2s}{2}}} \Big\vert v - \Big(\frac{x+\varepsilon w - y}{t-s}\Big)\Big\vert^{\frac{d+2s}{2}}\dd s\dd y\dd w \dd t\dd x\dd v \\
	&\leq \int_{(-1, 0)\times B_2\times B_1}  \int_{Q^-_1} \frac{\Abs{f(t, X, v) - f(t, X, \frac{X - y}{t -s})}}{\Abs{v - \Big(\frac{X - y}{t-s}\Big)}^{\frac{d+2s}{2}}} \Big\lvert v - \left(\frac{X - y}{t-s}\right)\Big\rvert^{\frac{d+2s}{2}} \dd s\dd y\dd w \dd t\dd X\dd v \\
	&\leq \int_{(-1, 0)\times B_2\times B_1}  \int_{(-3, -2)\times B_3} \frac{\Abs{f(t, X, v) - f(t, X, V)}}{\abs{v - V}^{\frac{d+2s}{2}}}\abs{v - V}^{\frac{d+2s}{2}} (t - s)^{d} \dd s\dd V \dd t\dd X\dd v \\
	&\lesssim \int_{(-1, 0)\times B_2\times B_1} \int_{B_3} \Abs{f(t, X, v) - f(t, X, V)} K(v, V)^{\frac{1}{2}} \dd V \dd t\dd X\dd v\\
	&\lesssim \int_{Q_3}   \Bigg(\int_{B_3} \Abs{f(t, X, v) - f(t, X, V)}^2K(v, V)\dd V\Bigg)^{1/2} \dd t\dd X\dd v,
\eals
where we used $0 \leq \varphi_\varepsilon \leq 1$, the change of variables $x \to X = x+ \varepsilon w$ and $y \to V = \frac{X - y}{t -s}$, the coercivity estimate \eqref{eq:coercivity-sqrt} and the Cauchy-Schwarz inequality.

For $I_4$ we can proceed similarly with the change of variables $x \to \tilde V = \frac{x +\varepsilon w- y}{t -s}$, Fubini, \eqref{eq:coercivity-sqrt} and Cauchy-Schwarz:
\bals
	\fint_{Q_1} &\abs{I_4} \dd t\dd x\dd v \\
	&= \fint_{Q_1} \Bigg\vert \int_{Q^-_1}\left[f\left(s, y, \frac{x +\varepsilon w - y}{t -s}\right) - f(s, y, w)\right]\varphi_\varepsilon(y, w)\dd s\dd y\dd w\Bigg\vert \dd t\dd x\dd v \\
	&\leq \fint_{Q_1}  \int_{Q^-_1} \frac{\Abs{f\big(s, y, \frac{x +\varepsilon w - y}{t -s}\big) - f(s, y, w)}}{\Abs{\Big(\frac{x +\varepsilon w - y}{t-s}\Big) - w}^{\frac{d+2s}{2}}} \Big\lvert\left(\frac{x +\varepsilon w - y}{t-s}\right) - w\Big\rvert^{\frac{d+2s}{2}}\dd s\dd y\dd w \dd t\dd x\dd v \\	
	&\lesssim \int_{B_3}  \int_{Q^-_1} \frac{\abs{f(s, y, \tilde V) - f(s, y, w)}}{\abs{\tilde V - w}^{\frac{d+2s}{2}}}\abs{\tilde V - w}^{\frac{d+2s}{2}} \dd s\dd y\dd w \dd \tilde V \\
	&\lesssim \int_{Q_3} \Bigg(\int_{B_3} \Abs{f(s, y, \tilde V) - f(s, y, w)}^2K(\tilde V, w) \dd \tilde V\Bigg)^{\frac{1}{2}} \dd s\dd y\dd w.
\eals

For $I_3$ we use a Taylor formula in weak form against $\varphi_\varepsilon$ between $(t, x +\varepsilon w)$ and $(s, y)$ along $\mathcal T$ and the equation \eqref{eq:1.1} satisfied by $f$. We obtain
\bals
	I_3&(t, x, v) \\
	&= \int_{Q^-_1} \left[f\left(t, x+\varepsilon w, \frac{x+\varepsilon w - y}{t -s}\right) - f\left(s, y,\frac{x+\varepsilon w - y}{t -s}\right)\right] \varphi_\varepsilon(y, w)\dd s\dd y\dd w \\
	&\lesssim \int_{Q^-_1} \int^1_0 (t-s)\mathcal Tf\left(\tau t + (1 - \tau)s, \tau(x+\varepsilon w)+ (1 - \tau)y, \frac{x+\varepsilon w  - y}{t -s}\right)\varphi_\varepsilon(y, w)\dd\tau \dd s\dd y\dd w \\
	&\lesssim  \int_{Q^-_1} \int^1_0 (t-s) \mathcal L f \left(\tau t + (1 - \tau)s, \tau(x+\varepsilon w) + (1 - \tau)y, \frac{x+\varepsilon w  - y}{t -s}\right)\varphi_\varepsilon(y, w)\dd\tau\dd s\dd y\dd w\\
	&\quad+  \int_{Q^-_1} \int^1_0 (t-s) h\left(\tau t + (1 - \tau)s, \tau(x+\varepsilon w) + (1 - \tau)y, \frac{x +\varepsilon w - y}{t -s}\right)\varphi_\varepsilon(y, w)\dd\tau \dd s\dd y\dd w\\
	&=: I_{31}(t, x, v) +I_{32}(t, x, v).
\eals
For $I_{31}$ we then perform the change of variables
\beqs
	(y, w) \to (Y, W) := \left(\tau(x+\varepsilon w) + (1-\tau)y, \frac{x+\varepsilon w - y}{t-s}\right),
\eeqs
such that $(y, w) \to (Y, W)$ is a bijection from $(B_1)^2$ to the set
\beqs
	E \subset B(\tau x, (1-\tau) + \tau \varepsilon) \times B\left(\frac{x}{t-s}, \frac{1+\varepsilon}{t-s}\right) \subset B_2\times B_3,
\eeqs
with Jacobian $\big(\frac{\varepsilon}{t-s}\big)^d$. Then we write for brevity $T = \tau t + (1-\tau)s$ so that
\bals
	\fint_{Q_1}&\Abs{I_{31}(t, x, v)}\dd t\dd x\dd v \\
	&\lesssim \varepsilon^{-d}\int_{(-1, 0]\times B_3\times B_1}\Bigg\lvert\int_0^1\int_{(-3, -2] \times E} \mathcal L f (T, Y,  W)\\
	&\qquad\qquad\qquad\qquad\times\varphi_\varepsilon\left(Y - \tau(t-s)W, \frac{Y-x+(1-\tau)(t-s)W}{\varepsilon}\right)\dd s\dd Y\dd W \dd \tau\Bigg\rvert\dd t\dd x\dd v.
\eals
We distinguish the symmetric and skew symmetric part: write 
\bals
&\varphi_\varepsilon := \varphi_\varepsilon\left(Y - \tau(t-s)W, \frac{Y-x+(1-\tau)(t-s)W}{\varepsilon}\right),\\
&\varphi_\varepsilon' := \varphi_\varepsilon\left(Y - \tau(t-s)W', \frac{Y-x+(1-\tau)(t-s)W'}{\varepsilon}\right).
\eals
Then, if we write $E =: E_x \times E_v$
\bals
	 \int_{E}&\int_{\R^d}\big[f (W') - f(W)\big]K(W, W')\dd W'\varphi_\varepsilon\Big(Y - \tau(t-s)W, \frac{Y-x+(1-\tau)(t-s)W}{\varepsilon}\Big)\dd W\dd Y \\
	 &= \int_{E_x \times \R^d}\int_{\R^d}\big[f (W') - f(W)\big]K(W, W')\dd W'\varphi_\varepsilon\dd W\dd Y \\
	 &= \frac{1}{2}\int_{E_x \times \R^d}\int_{\R^d}\big[f (W') - f(W)\big]K(W, W')\big[\varphi_\varepsilon -\varphi_\varepsilon'\big] \dd W'\dd W\dd Y\\
	 &\quad +\frac{1}{2} \int_{E_x \times \R^d}\int_{\R^d}\big[f (W') - f(W)\big]K(W, W')\big[\varphi_\varepsilon +\varphi_\varepsilon'\big] \dd W'\dd W\dd Y.
\eals
For the symmetric part, we simply use Cauchy-Schwarz inequality and the regularity of $\varphi_\varepsilon$:
\bals
	 \frac{1}{2}&\int_{E_x \times \R^d}\int_{\R^d}\big[f (W') - f(W)\big]K(W, W')\big[\varphi_\varepsilon -\varphi_\varepsilon'\big] \dd W'\dd W\dd Y\\
	 &\leq  \frac{1}{2}\int_{E_x \times \R^d}\Bigg\{\int_{\R^d}\big[f (W') - f(W)\big]^2K(W, W')\dd W'\Bigg\}^{\frac{1}{2}}\\
	 &\qquad\qquad\quad\times\Bigg\{\int_{\R^d}\big[\varphi_\varepsilon -\varphi_\varepsilon'\big]^2K(W, W') \dd W'\Bigg\}^{\frac{1}{2}}\dd W\dd Y\\
	 &\leq C[\varphi_\varepsilon]_{C^2}\int_{E_x \times \R^d}\Bigg(\int_{\R^d}\big[f (W') - f(W)\big]^2K(W, W')\dd W'\Bigg)^{\frac{1}{2}}\dd W\dd Y.
\eals
Let us remark that we can also prove that the symmetric part is bounded by the $H^s_v$ norm by doing a dyadic decomposition similar to the proof of Theorem \ref{thm:4.1}. However, it will be more convenient in the Intermediate Value Lemma, Theorem \ref{thm:IVL}, to work with the kernel $K$ directly.

For the skew-symmetric part, we write
\bals
	\frac{1}{2}\int_{E_x \times \R^d}\int_{\R^d}&\big[f (W') - f(W)\big]\big[\varphi_\varepsilon + \varphi_\varepsilon'\big]K(W, W')\dd W'\dd W\dd Y\\
	&=\frac{1}{2} \int_{E_x \times  \R^d}\int_{\R^d}f(W)\big[\varphi_\varepsilon + \varphi_\varepsilon'\big]\big[K(W', W)- K(W, W')\big]\dd W'\dd W\dd Y\\
	&\leq C\int_{E_x \times \R^d}f(W)\Bigg\lvert\textrm{PV}\int_{\R^d}\big[K(W',W)- K(W, W')\big]\dd W'\Bigg\rvert\dd W\dd Y.
\eals
Note that the cancellation assumptions \eqref{eq:cancellation1} would allow us to bound the last line by the $L^1$ norm of $f$. But again, we will keep it as it is for later. 
Thus we obtained
\bals
	\fint_{Q_1} &\abs{I_{31}(t, x, v)}\dd t\dd x \dd v \\
	&\lesssim \varepsilon^{-{d+2}} \int_{(-3^{-2s}, 0]\times B_{3^{1+2s}} \times \R^d}\Bigg(\int_{\R^d}\Abs{f (W) - f(W')}^2K(W,W')\dd W'\Bigg)^{\frac{1}{2}} \dd t\dd Y\dd W\\
	&\quad+ \varepsilon^{-d}\int_{(-3^{-2s}, 0]\times B_{3^{1+2s}} \times \R^d}f(W)\Bigg\lvert\textrm{PV}\int_{\R^d}\big[K(W', W)- K(W, W')\big]\dd W'\Bigg\rvert\dd t\dd Y\dd W.
\eals

For the source term we perform the change of variable $y \to V = \frac{x+\varepsilon w - y}{t-s}$, $x \to X =x+\varepsilon w -(1-\tau)(t-s)V, s \to s' = t-s$ and $t' \to t - (1-\tau)s'$. 
We deduce
\beqs
	\fint_{Q_1}\abs{I_{32}(t, x, v)}\dd t\dd x\dd v \lesssim \int_{(-3,0]\times B_3\times B_3}\abs{h(t', X, V)}\dd t'\dd X\dd V\lesssim \int_{Q_3} \abs{h} \dd t'\dd X\dd V.
\eeqs

Finally, for $I_1$ we estimate
\bals
	\fint_{Q_1} \abs{I_1(t, x, v)} \dd t\dd x\dd v &\lesssim \fint_{Q_1} \int_{Q^-_1}   \Abs{f(t, x, v) - f(t, x+\varepsilon w, v)} \varphi_\varepsilon(y, w)\dd s\dd y\dd w\dd t\dd x\dd v \\
	&\lesssim \int_{Q_1}\int_{B_1} \frac{\abs{f(t, x, v) - f(t, x+\varepsilon w, v)}}{\abs{\varepsilon w}^{d + \sigma}}\abs{\varepsilon w}^{d + \sigma} \dd w\dd t\dd x\dd v \\
	&\lesssim \varepsilon^\sigma \int_{Q_1}\int_{B_2}  \frac{\abs{f(t, x, v) - f(t, x', v)}}{\abs{x' - x}^{d + \sigma}} \dd x'\dd t\dd x\dd v \\
	&\lesssim \varepsilon^\sigma \norm{f}_{L^1_{t, v}W^{\sigma, 1}_x(Q_2)},
\eals
where we use $0 \leq \varphi_\varepsilon \leq 1$ and the change of variables $w \to x' = x + \varepsilon w$.

Combining all these estimates yields the claim. 
\end{proof}

\subsection{Intermediate Value Lemma}\label{subsec:ivl}
In this section we prove the Second De Giorgi Lemma, also known as the Intermediate Value Lemma, stated in Theorem \ref{thm:IVL}. The proof exploits the cross-term arising in the energy estimate. The idea originates from Bass and Kassmann \cite{kassbass} and was further employed by Caffarelli, Chan and Vasseur \cite{CCV}. However, this is the first time it is used in the context of kinetic equations and for kernels as general as the non-cutoff Boltzmann kernel. 

Let $\mu < 1$. We take a cut-off $\varphi \in C^\infty_c(\R^{d})$ in $x$ such that $0 \leq \varphi \leq 1$ and where $\varphi = 1$ in $B_{(3r_0)^{1+2s}}$ and $\varphi = 0$ outside $B_{(9r_0)^{1+2s}}$. Denote $\psi = 1 - \varphi$. We further define 
\bals
	&F_0(v) = \frac{1}{r_0}\sup\Bigg(-r_0, \frac{\inf(0, \abs{v}^2 - 10r_0^2)}{r_0}\Bigg),\\
	&F_1(v) = \frac{1}{r_0}\sup\Bigg(-r_0, \frac{\inf(0, \abs{v}^2 - 9r_0^2)}{r_0}\Bigg),\\
	&F_2(v) = \frac{1}{r_0}\sup\Bigg(-r_0, \frac{\inf(0, \abs{v}^2 - 8r_0^2)}{r_0}\Bigg). 
\eals
These functions are Lipschitz, equal to $0$ outside $B_{\sqrt{10}r_0},  B_{3r_0}$ and $B_{2\sqrt{2}r_0}$ respectively and equal to $-1$ in $B_{3r_0}, B_{2\sqrt2r_0}$ and $B_{\sqrt{7}r_0}$. Then we use the following three cut-offs:
\bal\label{eq:barriers}
	&\varphi_0(x, v) = \psi(x) + 1+ F_0(v),\\
	&\varphi_1(x, v) = \psi(x) + 1+ \mu F_1(v),\\
	&\varphi_2(x, v) = \psi(x) + 1+ \mu^2 F_2(v).
\eal
We refer the reader to Figure \ref{fig:barriers} to visualise \eqref{eq:barriers}. 
\begin{proof}[Proof of Theorem \ref{thm:IVL}]
Let $f$ be a sub-solution to \eqref{eq:1.1}-\eqref{eq:1.2} in $[-3, 0] \times Q_1^{t}$ satisfying \eqref{eq:IVLc}. 

\textit{Step 1: Improved energy estimate.}
We test \eqref{eq:1.1} with $(f-\varphi_1)_+$ similar to the proof of the energy estimate that we had before. We get for $-3 < T_1 < T_2 < 0$
\bal
	\int\int (f - \varphi_1)_+^2& \dd x \dd v \Big\lvert^{T_2}_{T_1} + \int_{T_1}^{T_2}\int \mathcal E\big((f-\varphi_1)_+, (f-\varphi_1)_+\big) \dd x \dd t \\
	&\leq - \int_{T_1}^{T_2}\int \mathcal E\big(\varphi_1, (f-\varphi_1)_+\big) \dd x \dd t - \int_{T_1}^{T_2}\int \mathcal E\big((f-\varphi_1)_-, (f-\varphi_1)_+\big) \dd x \dd t \\
	&\qquad + \int_{T_1}^{T_2} \int\int (f-\varphi_1)_+ h\dd z.
\label{eq:3.4}
\eal
Note that here holds
\bals
	(f - \varphi_1)_+(v) \leq f - 1 + \mu \leq \mu.
\eals
Therefore by our assumption on the source term we have
\bals
	 \int_{T_1}^{T_2} \int\int (f-\varphi_1)_+ h\dd z \leq C\mu^3.
\eals	
Furthermore we estimate
\bals
	-\mathcal E^{\textrm{sym}}\big(\varphi_1, (f-\varphi_1)_+\big)\leq &\frac{1}{8} \mathcal E^{\textrm{sym}}\big((f-\varphi_1)_+, (f-\varphi_1)_+\big) \\
	&+ \int \int \Abs{\varphi_1(v) - \varphi_1(w)}^2K(v, w) \big(\chi_{B_{3r_0}}(v)+\chi_{B_{3r_0}}(w)\big) \dd w \dd v.
\eals	
The first term is absorbed on the left in \eqref{eq:3.4}. For the second one there holds 
\bals
	2 \int \int \Abs{\varphi_1(v) - \varphi_1(w)}^2K(v, w) \chi_{B_{3r_0}}(v) \dd w \dd v &\leq 4\mu^2\int\int\Abs{F_1(v) - F_1(w)}^2K(v, w) \chi_{B_{3r_0}}(v)\dd w \dd v \\
	&\leq C \mu^2\int\int\abs{v -w}^2K(v, w)\chi_{B_{3r_0}}(v) \dd w \dd v \\
	&\leq C\Lambda \mu^2,
\eals
using that $F_1$ is Lipschitz and the upper bound \eqref{eq:upperbound-2} on $K$. For the skew-symmetric part we note that $\mathcal E^{\textrm{skew}} = 0$ for $f \leq \varphi_1$. Thus 
\bals
	-&\mathcal E^{\textrm{skew}}\big(\varphi_1, (f-\varphi_1)_+\big)\\
	&= \frac{1}{2}\int\int\big(\varphi_1(w) - \varphi_1(v)\big)(f - \varphi_1)_+(v)\big(K(v, w) - K(w, v)\big) \dd w \dd v\\
	&= \frac{1}{2}\int\int\big(\varphi_1(w) - \varphi_1(v)\big)(f - \varphi_1)_+(v)\big(K(v, w) - K(w, v)\big)\chi_{B_{3r_0}}(v) \dd w \dd v\\
	&=\int\int\mu\big(F_1(w) - F_1(v)\big)(f - \varphi_1)_+(v)\big(K(v, w) - K(w, v)\big)\chi_{B_{3r_0}}(v) \dd w \dd v\\
	&\leq \mu \int \Bigg\lvert\textrm{PV} \int(v-w)\big(K(v, w) - K(w, v)\big) \dd w\Bigg\rvert (f - \varphi_1)_+(v)\chi_{B_{3r_0}}(v)\dd v\\
	&\leq C \mu \Lambda  \int(f - \varphi_1)_+(v)\chi_{B_{3r_0}}(v) \dd v.
\eals
We used the cancellation assumptions \eqref{eq:cancellation2} and the definition of $F_1$. Thus 
\bals
	-\mathcal E^{\textrm{skew}} \big( \varphi_1, (f-\varphi_1)_+\big)\leq C \mu^2.
\eals
We deduce
\bals
	-\mathcal E \big( \varphi_1, (f-\varphi_1)_+\big) \leq C \mu^2.
\eals
Finally we write due to the cancellation assumptions \eqref{eq:cancellation1}
\bals
	 \mathcal E\big((f-\varphi_1)_+, (f-\varphi_1)_+\big)  &\geq  \int\int \big((f-\varphi_1)_+(v) -  (f-\varphi_1)_+(w)\big)^2 K(v, w) \dd w \dd v \\
	 &\quad- \int\int (f-\varphi_1)_+^2(v) \Bigg\lvert\int\big(K(v, w) -K(w, v)\big)\dd w\Bigg\rvert \dd v\\
	 &\geq\int\int \big((f-\varphi_1)_+(v) -  (f-\varphi_1)_+(w)\big)^2 K(v, w) \dd w \dd v -\Lambda\mu^2.
\eals
Thus we obtain from \eqref{eq:3.4} for $t \in (T_1, T_2)$
\bal
	\int\int_{Q_{3r_0}^t} &(f - \varphi_1)_+^2 \dd x \dd v \Big\lvert^{T_2}_{T_1}\\
	&+ \int_{T_1}^{T_2}\int_{Q_{3r_0}^t} \int_{\R^d} \ \big((f-\varphi_1)_+(v) -  (f-\varphi_1)_+(w)\big)^2 K(v, w) \dd w\dd z \\
	&+ \int_{T_1}^{T_2}\int_{B_{(3r_0)^{1+2s}}} \mathcal E\big((f-\varphi_1)_-, (f-\varphi_1)_+\big) \dd x \dd t \leq C \mu^2(T_2 - T_1).
\label{eq:enestimimprov}
\eal
In particular, we deduce for $-3 < T_1 < T_2 < 0$:
\bals
	- \int_{T_1}^{T_2}\int \int\int(f-\varphi_1)_+(v) (f-\varphi_1)_-(w) K(v, w) \dd w\dd v\dd x \dd t \leq C\mu^2\big(T_2 - T_1\big).
\eals

\textit{Step 2: Weak Poincaré for the intermediate cut-off.}
We want to use the hypoelliptic Poincaré inequality for the intermediate cut-off $(f-\varphi_1)_+$. We note that in $(-3, 0] \times Q^t_{3r_0}$ there holds $\nabla_x \varphi_1 = 0$, thus
\bals
	\mathcal T(f-\varphi_1)_+ = \mathcal Tf (v) \chi_{f> \varphi_1}(v) \leq \big[\mathcal Lf(v) + h\big]\chi_{f> \varphi_1}(v) \leq \mathcal L(f-\varphi_1)_+ + \underbrace{\left(\mu\mathcal LF(v)+h\right)\chi_{f > \varphi_1}(v)}_{=: \tilde h}.
\eals
Therefore $(f-\varphi_1)_+$ is a sub-solution of \eqref{eq:1.1} in $(-3, 0]\times Q^t_{3r_0}$ with source term $\tilde h$. Then, if we denote by $\Omega_{3r_0}$ the domain $\Omega_{3r_0} := (-(3r_0)^{2s}, 0] \times B_{(3r_0)^{1+2s}} \times \R^d$, the hypoelliptic Poincaré inequality \eqref{eq:3.1} gives for some $\varepsilon \in (0, 1)$ and $\sigma$ from Proposition \ref{prop:13}
\bal
	\big\Vert\big[(f-\varphi_1)_+ &- \langle(f-\varphi_1)_+\rangle_{Q_{r_0}^-}\big]_+\big\Vert_{L^1(Q_{r_0})}\\
	 &\leq \frac{1}{\varepsilon^{{d+2}}}\int_{\Omega_{3r_0}}\Bigg(\int_{\R^d}\Abs{(f-\varphi_1)_+(v) - (f-\varphi_1)_+(w)}^2K(v, w)\dd w\Bigg)^{\frac{1}{2}} \dd z\\
	&\quad+ \frac{1}{\varepsilon^d}\int_{\Omega_{3r_0}}\int_{\R^d} (f-\varphi_1)_+(v)\big(K(v, w) - K(w, v)\big) \dd w \dd z\\
	&\quad+ \varepsilon^{\sigma} \norm{(f-\varphi_1)_+}_{L^1_{t, v}W_x^{\sigma, 1}(Q_{2r_0})} + \norm{\tilde h}_{L^1((Q_{3r_0})}.
\label{eq:hypopc}
\eal
In the sequel we write $n = 2s + d(2+2s)$ for the total dimension of the kinetic cylinder such that $\abs{Q_{r_0}} \sim r_0^n$.
On the one hand, we have by assumption \eqref{eq:IVLc}
\bals
	\fint_{Q_{r_0}^-} (f-\varphi_1)_+ \dd z &\leq \mu \frac{\abs{\{f > \varphi_1\} \cap Q_{r_0}^-}}{\abs{Q_{r_0}^-}} \leq \mu\frac{\abs{\{f > \varphi_0\} \cap Q_{r_0}^-}}{\abs{Q_{r_0}^-}}\\
	&\leq \mu\left(1 - \frac{\abs{\{f \leq \varphi_0\} \cap Q_{r_0}^-}}{\abs{Q_{r_0}^-}}\right)\leq \mu - \mu\delta_1,
\eals	
so that with \eqref{eq:IVLc}
\bal
	\big\Vert \big[(f-\varphi_1)_+ - \langle(f-\varphi_1)_+\rangle_{Q_{r_0}^-}\big]_+\big\Vert_{L^1(Q_{r_0})} &\geq \int_{Q_{r_0}\cap \{f > \varphi_2\}}\big[\big(f-\varphi_1\big)_+ - \mu(1 - \delta_1)\big]_+\dd z\\
	&\geq \int_{\{f > \varphi_2\} \cap Q_{r_0}}\big[\big(\varphi_2 -\varphi_1\big)_+ - \mu(1 - \delta_1)\big]_+\dd x\dd v\\
	&\geq\big(\mu \delta_1 - \mu^2\big)\delta_2 r_0^n.
\label{eq:3.10}
\eal
Note that $\varphi_2 - \varphi_1 = \mu - \mu^2$ in $Q_{r_0}$.

\textit{Step 3-(i): Upper bounds on the symmetric fractional gradient.} On the other hand, we find 
\bals
	\int_{\Omega_{3r_0}}&\Bigg(\int_{\R^d} \abs{(f-\varphi_1)_+(v) - (f-\varphi_1)_+(w)}^2K(v, w)\dd w\Bigg)^{\frac{1}{2}} \dd z\\
	&= \underbrace{\int_{\Omega_{3r_0} \cap \{ f< \varphi_0\}} \dots}_{=: I_1} + \underbrace{\int_{ \Omega_{3r_0}\cap\{\varphi_0 < f < \varphi_2\}}\dots}_{=: I_2} + \underbrace{\int_{\Omega_{3r_0} \cap \{f > \varphi_2\}}\dots}_{=: I_3}.
\eals
For $I_2$ we get 
\bal
	I_2 &\leq \Abs{\{\varphi_0 < f < \varphi_2\} \cap Q_{3r_0}}^{\frac{1}{2}}\Bigg(\int_{\Omega_{3r_0}}\int_{\R^d}\abs{(f-\varphi_1)_+(v) - (f-\varphi_1)_+(w)}^2K(v, w)\dd w \dd z\Bigg)^{\frac{1}{2}}\\
	&\leq C\mu r_0^{\frac{n}{2}}\Abs{\{\varphi_0 < f < \varphi_2\} \cap (-3, 0]\times Q^t_{\frac{1}{2}}}^{\frac{1}{2}}\leq Cr_0^{\frac{n}{2}}\Abs{\{\varphi_0 < f < \varphi_2\} \cap (-3, 0]\times Q^t_{\frac{1}{2}}}^{\frac{1}{2}}.
\label{eq:up1}
\eal

For $I_1$ we denote with $\Sigma_{t, x}$ the set of times and spaces where 
$\Abs{\{f(t, x, \cdot) \leq \varphi_0\} \cap Q_{3r_0}} \geq \frac{\delta_1}{4}$. In particular by \eqref{eq:IVLc} we know that $\abs{\Sigma} \geq C\delta_1 r_0^{n-d}$. We have
\bal\label{eq:up1.5}
	I_1 = \int_{\Omega_{3r_0}\cap \{f \leq \varphi_0\}}&\Bigg(\int_{\R^d} \Abs{(f-\varphi_1)_+(v) - (f-\varphi_1)_+(w)}^2K(v, w)\dd w\Bigg)^{\frac{1}{2}} \dd z\\
	&\leq Cr_0^{\frac{n}{2}}\Bigg(\int_{\Sigma_{t, x} \times \R^d} \int_{\R^d} (f-\varphi_1)^2_+(v)\chi_{f \leq \varphi_0}(w) K(v, w)\dd w \dd z\Bigg)^{\frac{1}{2}}\\
	&\leq Cr_0^{\frac{n}{2}} \frac{\mu^{\frac{1}{2}}}{\delta_1^{\frac{1}{2}}} \Bigg(-\int_{\Omega_{3r_0}} \int_{\R^d} (f-\varphi_1)_+(v)(f-\varphi_1)_-(w)K(v, w)\dd w \dd z\Bigg)^{\frac{1}{2}}\\
	&\leq C \frac{\mu^{\frac{3}{2}}}{\delta_1^{\frac{1}{2}}}r_0^n.
\eal
We used Cauchy-Schwarz, the fact that $(f-\varphi_1)_+ \leq \mu$, the gap between $\varphi_1$ and $\varphi_0$ and the energy estimate \eqref{eq:enestimimprov}.

To estimate $I_3$ we will split it into three parts
\bal
	I_3 &= \int_{\{f > \varphi_2\}} \Bigg(\int_{\R^d} \abs{(f-\varphi_1)_+(v) - (f-\varphi_1)_+(w)}^2K(v, w)\dd w \Bigg)^{\frac{1}{2}}\dd z \\
	&=  \int_{\{f > \varphi_2\}} \Bigg(\int_{\{f > \varphi_2\}} \dots \dd w\Bigg)^{\frac{1}{2}} \dd z+ \int_{\{f > \varphi_2\}} \Bigg(\int_{\{\varphi_0 < f < \varphi_2\}} \dots \dd w \Bigg)^{\frac{1}{2}} \dd z\\
	&\qquad+ \int_{\{f > \varphi_2\}} \Bigg(\int_{\{f < \varphi_0\}} \dots \dd w\Bigg)^{\frac{1}{2}} \dd z.
\label{eq:splitting}
\eal
The first term gives 
\bals
	\int_{\{f > \varphi_2\}}&\Bigg(\int_{\{f > \varphi_2\}} \Abs{(f-\varphi_1)_+(v) - (f-\varphi_1)_+(w)}^2K(v, w)\dd w \Bigg)^{\frac{1}{2}}\dd z\\
	&\leq \Bigg(\int_{\{f > \varphi_2\}}\int_{\{f > \varphi_2\}} \Abs{(f-\varphi_2)_+(v) - (f-\varphi_2)_+(w)}^2K(v, w)\dd w \dd z\Bigg)^{\frac{1}{2}} \\
	&\qquad+ \Bigg(\int \int \abs{\varphi_2(v) - \varphi_2(w)}^2K(v, w) \dd w \dd v\Bigg)^{\frac{1}{2}}\\
	&\leq C\mu^2 r_0^n.
\eals
since $\varphi_1 = 1-\mu$ on the set where $\{f > \varphi_2\}$ and since $F_2$ is Lipschitz. 
The third term of \eqref{eq:splitting} can be estimated as $I_1$:
\bals
	\int_{\{f > \varphi_2\}}&\Bigg(\int_{\{f < \varphi_0\}}\Abs{(f-\varphi_1)_+(v) - (f-\varphi_1)_+(w)}^2K(v, w)  \dd w \Bigg)^{\frac{1}{2}} \dd z\\
	&\leq Cr_0^{\frac{n}{2}} \Bigg(\int_{\{f > \varphi_2\}}\int_{\{f < \varphi_0\}}\Abs{(f-\varphi_1)_+(v) - (f-\varphi_1)_+(w)}^2K(v, w) \dd w \dd z\Bigg)^{\frac{1}{2}}\\
	&\leq C\frac{r_0^{\frac{n}{2}}}{\delta_1^{\frac{1}{2}}} \Bigg(\int_{\Sigma_{t, x} \times \R^d} \int_{\R^d} (f-\varphi_1)^2_+(v)\chi_{f \leq \varphi_0}(w) K(v, w)\dd w \dd z\Bigg)^{\frac{1}{2}}\\
	&\leq Cr_0^{\frac{n}{2}} \frac{\mu^{\frac{1}{2}}}{\delta_1} \Bigg(-\int_{\Omega_{3r_0}} \int_{\R^d} (f-\varphi_1)_+(v)(f-\varphi_1)_-(w)K(v, w)\dd w \dd z\Bigg)^{\frac{1}{2}}\\
	&\leq C \frac{\mu^{\frac{3}{2}}}{\delta_1}r_0^n.
\eals
For the second term in \eqref{eq:splitting} we further distinguish the singular from the non-singular part. Let $\gamma_1 > 0$. Then
\bals
	I_3 &= \int_{\{f > \varphi_2\}} \Bigg(\int_{\{\varphi_0 < f < \varphi_2\}}  \Abs{(f-\varphi_1)_+(v) - (f-\varphi_1)_+(w)}^2K(v, w)\dd w \Bigg)^{\frac{1}{2}}\dd z \\
	&= \int_{\{f > \varphi_2\}\cap \abs{v-w} < \gamma_1} \Bigg(\int_{\{\varphi_0 < f < \varphi_2\}} \dots\dd w \Bigg)^{\frac{1}{2}}\dd z + \int_{\{f > \varphi_2\}\cap \abs{v-w} > \gamma_1} \Bigg(\int_{\{\varphi_0 < f < \varphi_2\}} \dots\dd w \Bigg)^{\frac{1}{2}}\dd z.
\eals
Away from the diagonal we pick up an intermediate set due to the upper bound \eqref{eq:upperbound}
\bals
	&\int_{\{f > \varphi_2\}\cap \abs{v-w} > \gamma_1}\Bigg(\int_{\{\varphi_0 < f < \varphi_2\}}\Abs{(f-\varphi_1)_+(v) - (f-\varphi_1)_+(w)}^2K(v,w)  \dd w \Bigg)^{\frac{1}{2}} \dd z\\
	&\quad\leq r_0^{\frac{n}{2}}\Bigg(\int_{\{f > \varphi_2\}\cap \abs{v-w} > \gamma_1} \int_{\{\varphi_0 < f < \varphi_2\}} \Abs{(f-\varphi_1)_+(v) - (f-\varphi_1)_+(w)}^2K(v, w)\dd w \dd z\Bigg)^{\frac{1}{2}}\\
	&\quad\leq C\mu r_0^{n}\gamma_1^{-s}\abs{\{\varphi_0 < f < \varphi_2\}}^{\frac{1}{2}}.
\eals
Close to the diagonal we have with Cauchy-Schwarz and the energy estimate \eqref{eq:enestimimprov}
\bals
	\int_{\{f > \varphi_2 \}\cap \abs{v-w} < \gamma_1}&\Bigg(\int_{\{\varphi_0 < f < \varphi_2\}}\abs{(f-\varphi_1)_+(v) - (f-\varphi_1)_+(w)}^2K(v, w)  \dd w \Bigg)^{\frac{1}{2}} \dd z\\
	&\leq C\gamma_1^{\frac{1}{2}} \Bigg(\int \int_{\abs{v-w} < \gamma_1} \abs{(f-\varphi_1)_+(v) - (f-\varphi_1)_+(w)}^2K(v, w)  \dd w \dd z\Bigg)^{\frac{1}{2}}\\
	&\leq C\gamma_1^{\frac{1}{2}} \mu r_0^n.
\eals
Thus we conclude for $I_3$ choosing $\gamma_1$ such that $\gamma_1^{\frac{1}{2}} = \mu^{\frac{1}{2}}$
\bal
	I_3 &\leq C\mu^{\frac{3}{2}} r_0^n + C\frac{\mu^{\frac{3}{2}}}{\delta_1} r_0^n + C\mu^{1-s}\Abs{\{\varphi_0 < f < \varphi_2\} \cap Q_{5r_0}}^{\frac{1}{2}}\\
	&\leq C\frac{\mu^{\frac{3}{2}}}{\delta_1}r_0^n+ C\mu^{1-s}\Abs{\{\varphi_0 < f < \varphi_2\} \cap Q_{5r_0}}^{\frac{1}{2}}.
\label{eq:up2}
\eal

It remains to estimate the error terms in the Poincaré inequality. 
 
\textit{Step 3-(ii): Upper bounds on the anti-symmetric fractional gradient.}
For the skew-symmetric right hand side in \eqref{eq:hypopc} we get
\bals
	2\int_{\R^d}\int_{\R^d}  (f-\varphi_1)_+(v)&\big(K(v, w) -K(w, v)\big)\dd w \dd v \\
	&= \int_{\R^d} \int_{\R^d} \big((f-\varphi_1)_+(w) - (f-\varphi_1)_+(v)\big)\big(K(v, w) -K(w, v)\big)\dd w \dd v \\
	&= \int_{\{\varphi_0 < f < \varphi_2\}} \int_{\R^d} \big((f-\varphi_1)_+(w) - (f-\varphi_1)_+(v)\big)\big(K(v, w) -K(w, v)\big)\dd w \dd v \\
	&\quad + \int_{\{\varphi_2 < f\}} \int_{\R^d} \big((f-\varphi_1)_+(w) - (f-\varphi_1)_+(v)\big)\big(K(v, w) -K(w, v)\big)\dd w \dd v \\
	&\quad +\int_{\{f < \varphi_0\}} \int_{\R^d} \big((f-\varphi_1)_+(w) - (f-\varphi_1)_+(v)\big)\big(K(v, w) -K(w, v)\big)\dd w \dd v.
\eals
First, due to the cancellation \eqref{eq:cancellation1}
\bals
	\int_{\{\varphi_0 < f < \varphi_2\}}&\int_{\R^d} \big((f-\varphi_1)_+(w) - (f-\varphi_1)_+(v)\big)\big(K(v, w) -K(w, v)\big)\dd w \dd v \\
	&\leq C\mu \int_{\{\varphi_0 < f < \varphi_2\}} \int_{\R^d} \big(K(v, w) -K(w, v)\big)\dd w \dd v \\
	&\leq C\mu \Abs{\{\varphi_0 < f < \varphi_2\}}.
\eals
Second with the energy estimate \eqref{eq:enestimimprov}
\bal\label{eq:aux-varphi0}
	\int_{\{f < \varphi_0\}} \int_{\R^d} \big((f-\varphi_1)_+(w)&- (f-\varphi_1)_+(v)\big)\big(K(v, w) -K(w, v)\big)\dd w \dd v \\
	&= \int_{\{f < \varphi_0\}} \int_{\R^d} (f-\varphi_1)_+(w) \big(K(v, w) -K(w, v)\big)\dd w \dd v \\
	&\leq -\frac{C}{\delta_1r_0^{n-d}}\int_{\Sigma_{t, x}} \int_{\R^d} (f-\varphi_1)_+(w) (f-\varphi_1)_-(v) K(v, w)\dd w \dd v \\
	&\leq C \frac{\mu^2}{\delta_1}r_0^d.
\eal
Third we split further and use the energy estimate \eqref{eq:enestimimprov} on $(f-\varphi_2)_+$, the cancellation \eqref{eq:cancellation1}, and \eqref{eq:aux-varphi0} to bound
\bals
	\int_{\{f > \varphi_2\}} &\int_{\R^d} \big((f-\varphi_1)_+(w) - (f-\varphi_1)_+(v)\big)\big(K(v, w) -K(w, v)\big)\dd w \dd v \\
	&= \int_{\{f > \varphi_2\}} \int_{\{f > \varphi_2\}} \big((f-\varphi_1)_+(w) - (f-\varphi_1)_+(v)\big)\big(K(v, w) -K(w, v)\big)\dd w \dd v \\
	&\quad +\int_{\{f > \varphi_2\}} \int_{\{\varphi_0 < f < \varphi_2\}} \big((f-\varphi_1)_+(w) - (f-\varphi_1)_+(v)\big)\big(K(v, w) -K(w, v)\big)\dd w \dd v \\
	&\quad +\int_{\{f > \varphi_2\}} \int_{\{f < \varphi_0\}} \big((f-\varphi_1)_+(w) - (f-\varphi_1)_+(v)\big)\big(K(v, w) -K(w, v)\big)\dd w \dd v \\
	&\leq \int_{\{f > \varphi_2\}} \int_{\{f > \varphi_2\}} \big((f-\varphi_2)_+(w) - (f-\varphi_2)_+(v)\big)\big(K(v, w) -K(w, v)\big)\dd w \dd v \\
	&\quad + \int_{\{f > \varphi_2\}} \int_{\{f > \varphi_2\}} \big(\varphi_2(w) - \varphi_2(v)\big)\big(K(v, w) -K(w, v)\big)\dd w \dd v  \\
	&\quad+  C\mu \abs{\{\varphi_0 < f < \varphi_2\}} + C \frac{\mu^2}{\delta_1}r_0^d\\
	&\leq C\mu^2r_0^d +  C\mu \abs{\{\varphi_0 < f < \varphi_2\}} + C \frac{\mu^2}{\delta_1}r_0^d.
\eals
Thus
\beq
	\int_{\R^d} \int_{\R^d}  (f-\varphi_1)_+(v)\big(K(v, w) -K(w, v)\big)\dd w \dd v \leq C\mu^2r_0^d +  C\mu \abs{\{\varphi_0 < f < \varphi_2\}} + C \frac{\mu^2}{\delta_1}r_0^d.
\label{eq:auxskew}
\eeq

\textit{Step 3-(iii): Upper bounds on the small variation in the spatial variable.}
We realise by Lemma \ref{prop:11} that the term
$\norm{(f-\varphi_1)_+}_{L^1_{t, v}W_x^{\sigma, 1}(Q_{2r_0})}$ is bounded by the right hand side of the energy estimate. Thus, \eqref{eq:2.6} and \eqref{eq:enestimimprov} imply the existence of some constant $C$ depending on $r_0, T_1, T_2$ such that
\beq\label{eq:spatial-grad}
	\norm{(f-\varphi_1)_+}_{L^1_{t, v}W_x^{\sigma, 1}(Q_{2r_0})} \leq C \mu.
\eeq

\textit{Step 3-(iv): Upper bounds on the source $\tilde h$.}
Finally, we estimate the source term $\tilde h$ in the Poincaré-inequality \eqref{eq:hypopc}. The first part consists of
\bal\label{eq:s1-aux0}
	\int_{B_{3r_0}} \mathcal L\varphi_1(v)\chi_{f > \varphi_1}(v) \dd v&= \int_{\{\varphi_1 < f < \varphi_2\}} \int \big(\varphi_1(w) - \varphi_1(v)\big)K(v, w) \dd w\dd v \\
	&\quad  + \int_{\{f > \varphi_2\}} \int_{\{\varphi_0 < f < \varphi_2\}} \big(\varphi_1(w) - \varphi_1(v)\big)K(v, w) \dd w\dd v \\
	&\quad +\int_{\{f > \varphi_2\}} \int_{\{f < \varphi_0\}} \big(\varphi_1(w) - \varphi_1(v)\big)K(v, w) \dd w\dd v.
\eal
The last term uses the energy estimate \eqref{eq:enestimimprov}
\bal\label{eq:s1-aux1}
	&\int_{\{f > \varphi_2\}} \int_{\{f < \varphi_0\}} \big(\varphi_1(w) - \varphi_1(v)\big)K(v, w) \dd w\dd v \\
	&\quad\leq C \mu^{-1}\int_{\{f > \varphi_2\}} \int_{\{f < \varphi_0\}} (f-\varphi_1)_+(v) \Abs{\varphi_1(w) - \varphi_1(v)}K(v, w) \dd w\dd v \\
	&\quad\leq  - \frac{C}{\mu\delta_1r_0^{n-d}}\int_{\{f > \varphi_2\}}\int_{\Sigma_{t, x}} (f-\varphi_1)_+(v) (f-\varphi_1)_-(w)\Abs{\varphi_1(w) - \varphi_1(v)}K(v, w) \dd w\dd v \\
	&\quad\leq   \frac{C}{\mu\delta_1r_0^{n-d}}\int_{\Sigma_{t, x}}\Bigg(\int (f-\varphi_1)^2_+(v) (f-\varphi_1)_-(w)K(v, w) \dd v\Bigg)^{\frac{1}{2}}\\
	&\qquad \qquad\qquad \times\Bigg(\int \Abs{\varphi_1(w) - \varphi_1(v)}^2K(v, w) \dd v\Bigg)^{\frac{1}{2}}\dd w \\
	&\quad\leq   \frac{C}{\delta_1r_0^{n-d}} \int_{\Sigma_{t, x}}\Bigg(\int (f-\varphi_1)^2_+(v) (f-\varphi_1)_-(w)K(v, w) \dd v\Bigg)^{\frac{1}{2}}\dd w \\
	&\quad\leq C \frac{\mu^{\frac{1}{2}}}{\delta_1r_0^{n-d}}\Bigg(\int\int (f-\varphi_1)_+(v) (f-\varphi_1)_-(w)K(v, w) \dd v\dd w\Bigg)^{\frac{1}{2}}\\
	&\quad\leq C\frac{\mu^{\frac{3}{2}}}{\delta_1}r_0^d.
\eal
Moreover, if in either variable we integrate over the set $\{\varphi_0 < f < \varphi_2\}$ we split into the singular and the non-singular part. Let $\gamma_2 > 0$ so that
\bals
	\int_{\{\varphi_0 < f < \varphi_2\}} &\int \big(\varphi_1(w) - \varphi_1(v)\big)K(v, w) \dd w\dd v \\
	&= \int_{\{\varphi_0 < f < \varphi_2\}} \int_{\abs{v-w} < \gamma_2} \big(\varphi_1(w) - \varphi_1(v)\big)K(v, w) \dd w\dd v \\
	&\quad + \int_{\{\varphi_0 < f < \varphi_2\}} \int_{\abs{v-w} > \gamma_2} \big(\varphi_1(w) - \varphi_1(v)\big)K(v, w) \dd w\dd v.
\eals

On the one hand, due to \eqref{eq:upperbound}
\bal\label{eq:s1-aux2}
	 \int_{\{\varphi_0 < f < \varphi_2\}} \int_{\abs{v-w} > \gamma_2} \big(\varphi_1(w) - \varphi_1(v)\big)&K(v, w) \dd w\dd v \\
	 &\leq C\mu \gamma_2^{1-2s}\abs{\{\varphi_0 < f < \varphi_2\} \cap B_{3r_0}}.
\eal
Similarly, we get
\bal\label{eq:s1-aux3}
	 \int_{\{\varphi_2 < f\}} \int_{\{\varphi_0 < f < \varphi_2\} \cap \abs{v-w} > \gamma_2} \big(\varphi_1(w) - \varphi_1(v)\big)&K(v, w) \dd w\dd v \\
	 &\leq C\mu \gamma_2^{1-2s}\abs{\{\varphi_0 < f < \varphi_2\} \cap B_{5r_0}}.
\eal

On the other hand, we distinguish the symmetric from the anti-symmetric part. We start with 
\bals
	\int_{\{f > \varphi_2\}} &\int_{\{\varphi_0 < f < \varphi_2\}\cap \abs{v-w} < \gamma_2} \big(\varphi_1(w) - \varphi_1(v)\big)K(v, w) \dd w\dd v \\
	&= \int_{\{f > \varphi_2\}} \int_{\{\varphi_0 < f < \varphi_2\}\cap\abs{v-w} < \gamma_2} \big(\varphi_1(w) - \varphi_1(v)\big)\left(\frac{K(v, w)+K(w, v)}{2}\right) \dd w\dd v \\
	&\quad + \int_{\{f > \varphi_2\}} \int_{\{\varphi_0 < f < \varphi_2\}\cap\abs{v-w} < \gamma_2} \big(\varphi_1(w) - \varphi_1(v)\big)\left(\frac{K(v, w)-K(w, v)}{2}\right) \dd w\dd v.
\eals
Then we have for the symmetric part using the energy estimate \eqref{eq:enestimimprov} for $(f-\varphi_1)_+$, the upper bound \eqref{eq:upperbound} and the definition of $\varphi_1$ \eqref{eq:barriers},
\bal\label{eq:s1-aux4}
	\int_{\{f > \varphi_2\}} &\int_{\{\varphi_0 < f < \varphi_2\}\cap\abs{v-w} < \gamma_2} \big(\varphi_1(w) - \varphi_1(v)\big)\left(\frac{K(v, w)+K(w, v)}{2}\right) \dd w\dd v \\
	&\leq C\frac{1}{\mu} \int_{\{f > \varphi_2\}} \int_{\{\varphi_0 < f < \varphi_2\}\cap\abs{v-w} < \gamma_2} \big(\varphi_1(w) - \varphi_1(v)\big)(f-\varphi_1)_+(v)\\
	&\qquad \qquad \qquad \times \left(\frac{K(v, w)+K(w, v)}{2}\right) \dd w\dd v \\
	&\leq C\frac{1}{\mu} \int \int_{\abs{v-w} < \gamma_2} \big(\varphi_1(w) - \varphi_1(v)\big)\big((f-\varphi_1)_+(v) - (f-\varphi_1)_+(w)\big)K(v, w)\dd w\dd v \\
	&\leq C\frac{1}{\mu} \int \Bigg(\int_{\abs{v-w} < \gamma_2} \big(\varphi_1(w) - \varphi_1(v)\big)^2K(v, w)\dd w\Bigg)^{\frac{1}{2}}\\
	&\qquad\qquad\qquad\times \Bigg(\int \big((f-\varphi_1)_+(v) - (f-\varphi_1)_+(w)\big)^2K(v, w)\dd w\Bigg)^{\frac{1}{2}}\dd v \\
	&\leq C\gamma_2^{1-s} \mu.
\eal
The anti-symmetric part gives due to \eqref{eq:cancellation2} in case that $s \in [1/2, 1)$ and due to \eqref{eq:upperbound} in case that $s \in (0, 1/2)$
\bal\label{eq:s1-aux5}
	\int_{\{f > \varphi_2\}} &\int_{\{\varphi_0 < f < \varphi_2\}\cap\abs{v-w} < \gamma_2} \big(\varphi_1(w) - \varphi_1(v)\big)\left(\frac{K(v, w)-K(w, v)}{2}\right) \dd w\dd v \\
	&\leq \int_{\{\varphi_0 < f < \varphi_2\}\cap\abs{v-w} < \gamma_2} \int_{\{f > \varphi_2\}} \big(\varphi_1(w) - \varphi_1(v)\big)\left(K(v, w)-K(w, v)\right) \dd v\dd w \\ 
	&\leq C\mu \int_{\{\varphi_0 < f < \varphi_2\}\cap\abs{v-w} < \gamma_2} \Bigg\lvert\int_{\{f > \varphi_2\}} (v-w)\cdot\left(K(v, w)-K(w, v)\right) \dd v\Bigg\rvert\dd w \\ 
	&\leq C\mu \gamma_2^{1-2s} \abs{\{\varphi_0 < f < \varphi_2\}}.
\eal

Similarly
\bals
	\int_{\{\varphi_1 < f < \varphi_2\}} &\int_{\abs{v-w} < \gamma_2} \big(\varphi_1(w) - \varphi_1(v)\big)K(v, w) \dd w\dd v \\
	&= \int_{\{\varphi_1 < f < \varphi_2\}} \int_{\abs{v-w} < \gamma_2} \big(\varphi_1(w) - \varphi_1(v)\big)\left(\frac{K(v, w)+K(w, v)}{2}\right) \dd w\dd v \\
	&\quad + \int_{\{\varphi_1 < f < \varphi_2\}} \int_{\abs{v-w} < \gamma_2} \big(\varphi_1(w) - \varphi_1(v)\big)\left(\frac{K(v, w)-K(w, v)}{2}\right) \dd w\dd v.
\eals
Then the energy estimate \eqref{eq:enestimimprov} for $(f-\varphi_0)_+$ yields
\bal\label{eq:s1-aux6}
	&\int_{\{\varphi_1 < f < \varphi_2\}} \int_{\abs{v-w} < \gamma_2} \big(\varphi_1(w) - \varphi_1(v)\big)\left(\frac{K(v, w)+K(w, v)}{2}\right) \dd w\dd v \\
	&\quad\leq C\int_{\{\varphi_1 < f < \varphi_2\}} \int_{\abs{v-w} < \gamma_2} \big(\varphi_1(w) - \varphi_1(v)\big)(f-\varphi_0)_+(v)\left(\frac{K(v, w)+K(w, v)}{2}\right) \dd w\dd v \\
	&\quad\leq C \int \int_{\abs{v-w} < \gamma_2} \big(\varphi_1(w) - \varphi_1(v)\big)\big((f-\varphi_0)_+(v) - (f-\varphi_0)_+(w)\big)K(v, w)\dd w\dd v \\
	&\quad\leq C\int \Bigg(\int_{\abs{v-w} < \gamma_2} \big(\varphi_1(w) - \varphi_1(v)\big)^2K(v, w)\dd w\Bigg)^{\frac{1}{2}}\\
	&\qquad\qquad\qquad\qquad\times\Bigg(\int \big((f-\varphi_0)_+(v) - (f-\varphi_0)_+(w)\big)^2K(v, w)\dd w\Bigg)^{\frac{1}{2}}\dd v \\
	&\quad\leq C\gamma_2^{1-s} \mu.
\eal
Finally the anti-symmetric part gives again with \eqref{eq:cancellation2} in case that $s \in [1/2, 1)$ or with \eqref{eq:upperbound} in case that $s \in (0, 1/2)$
\bal\label{eq:s1-aux7}
	\int_{\{\varphi_1 < f < \varphi_2\}} \int_{\abs{v-w} < \gamma_2} \big(\varphi_1(w) - \varphi_1(v)\big)\left(\frac{K(v, w)-K(w, v)}{2}\right)& \dd w\dd v \\
	&\leq C\mu \gamma_2^{1-2s} \abs{\varphi_0 < f < \varphi_2}.
\eal

We conclude from \eqref{eq:s1-aux0}-\eqref{eq:s1-aux7} by choosing $\gamma_2^{1-s} = \mu^{\frac{1}{2}}$
\bal\label{eq:s1}
	\int_{Q_{3r_0}} \mathcal L\varphi_1(v)\chi_{f > \varphi_1}(v) \dd z  \leq C \Abs{\{\varphi_0 < f < \varphi_2\} \cap Q_{3r_0}} + \frac{\mu^{\frac{3}{2}}}{\delta_1}r_0^n +  \mu^{\frac{3}{2}} r_0^n.
\eal

The second part of the source term $\tilde h$ in \eqref{eq:hypopc} satisfies by assumption
\bal\label{eq:s2}
	\int_{Q_{3r_0}} h \chi_{f > \varphi_1}(v) \dd z &\leq C \mu^2 r_0^n.
\eal
Gathering these observations in \eqref{eq:s1} and \eqref{eq:s2} we find 
\bal
	\norm{\tilde h}_{L^1(Q_{3r_0})} &\leq C r_0^{n}\frac{\mu^{\frac{3}{2}}}{\delta_1} + Cr_0^{n} \mu^2 + C \abs{\{\varphi_0 < f < \varphi_2\} \cap Q_{3r_0}}^{\frac{1}{2}}\\
	&\leq Cr_0^{n} \frac{\mu^{\frac{3}{2}}}{\delta_1} + C \abs{\varphi_0 < f < \varphi_2\} \cap Q_{3r_0}}^{\frac{1}{2}}.
\label{eq:up4}
\eal

\textit{Step 4: Conclusion.} 
Now we assemble all the pieces. Recall that $n = 2s + d(2+2s)$. Then \eqref{eq:hypopc} gives with \eqref{eq:3.10}, \eqref{eq:up1}, \eqref{eq:up1.5}, \eqref{eq:up2}, \eqref{eq:auxskew}, \eqref{eq:spatial-grad} and \eqref{eq:up4}
\bals	
	\big(\mu \delta_1 - \mu^2\big)\delta_2 r_0^n &\leq  \frac{C}{\varepsilon^{d+2}} \left[r_0^{\frac{n}{2}}\abs{\{\varphi_0 < f < \varphi_2\}\cap (-3, 0] \times Q^t_{\frac{1}{2}}}^{\frac{1}{2}}+Cr_0^n \frac{\mu^{\frac{3}{2}}}{\delta_1}\right] \\
	&\quad+ C\varepsilon^\sigma \mu r_0^n  + C \abs{\{\varphi_0 < f < \varphi_2\} \cap Q_{5r_0}} + C\frac{\mu^2}{\delta_1\varepsilon^d}r_0^n + C\frac{\mu^{\frac{3}{2}}}{\delta_1}r_0^n.
\eals
Thus
\bals
	\big(\delta_1 - \mu\big)\delta_2 &\leq  C \frac{1}{\varepsilon^{d+2}} \left[\mu^{-1}r_0^{-{\frac{n}{2}}}\abs{\{\varphi_0 < f < \varphi_2\}\cap(-3, 0] \times Q^t_{\frac{1}{2}}}^{\frac{1}{2}} +  \frac{\mu^{\frac{1}{2}}}{\delta_1} \right]\\
	&\qquad+  C\varepsilon^\sigma +C\frac{\mu^{\frac{1}{2}}}{\delta_1} + C\frac{\mu}{\delta_1\varepsilon^d}.
\eals
If we choose $\varepsilon$ as
\beqs
	\varepsilon = \left(\frac{\delta_1\delta_2}{4C}\right)^\frac{1}{\sigma}
\eeqs
then we know that
\beqs
	C\varepsilon^\sigma \leq \frac{\delta_1\delta_2}{4}.
\eeqs
And if we choose $\mu$ as
\bals
	\mu = \left(\frac{\delta_1\delta_2}{4C\left(\delta_2 + \delta_1^{-1} + \varepsilon^{-(d+2)}\delta_1^{-1}\right)}\right)^{2} \sim \delta_1^{6d+16}\delta_2^{6d+14},
\eals
then we have assured that
\bals
	C\frac{\mu^{\frac{1}{2}}}{\varepsilon^{d+2}\delta_1} + C\frac{\mu^{\frac{1}{2}}}{\delta_1} + \delta_2 \mu  + \frac{\mu}{\delta_1\varepsilon^d}\leq \frac{\delta_1\delta_2}{4}.
\eals
Hence
\beqs
	\abs{\{\varphi_0 < f < \varphi_2\}\cap(-3, 0] \times Q^t_{\frac{1}{2}}}^{\frac{1}{2}} \geq \frac{\delta_1\delta_2}{4}\varepsilon^{d+2}r_0^{\frac{n}{2}}\mu \gtrsim (\delta_1\delta_2)^{9d+23} r_0^{\frac{n}{2}}.
\eeqs
\end{proof}

\subsection{Measure-to-Pointwise estimate}
\begin{lemma}[Measure-to-pointwise Lemma]\label{lem:16}
Let $\delta \in (0, 1)$ and $r_0 < \frac{1}{3}$. Then there is $\theta = \theta(\delta) \sim\delta^{2(1 + \delta^{-(18d+46)})} > 0$ such that any sub-solution $f$ of \eqref{eq:1.1}-\eqref{eq:1.2} in $[-3, 0]\times Q^t_1$ with \eqref{eq:coercivity}-\eqref{eq:cancellation2} for $\bar R = 2$, with a source term $h$ such that $\norm{h}_{L^\infty(Q_{1})} \leq \theta$ and so that $f \leq 1$ in $(-3, 0]\times B_{2^{-(1+2s)}}\times \R^d$ and
\beq
	\abs{\{f\leq 0\}\cap Q^-_{r_0}} \geq \delta \abs{Q^-_{r_0}}
\label{eq:3.15}
\eeq
satisfies 
\beqs
	f \leq 1 - \theta \quad \textrm{ in } Q_{\frac{r_0}{2}},
\eeqs
where $Q^-_{r_0} := Q_{r_0}(-2r_0^{2s}, 0, 0)$ and $[-3, 0]\times Q^t_1 = [-3, 0]\times B_1\times B_1$ as before.
\end{lemma}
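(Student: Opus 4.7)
The plan is to combine the First De Giorgi Lemma (Lemma \ref{lem:6.6}) with a finite iteration of the Intermediate Value Lemma (Theorem \ref{thm:IVL}) via a dichotomy across dyadic levels. First I would introduce the sequence of rescaled sub-solutions
\[
g_k(z) := 1 + \mu^{-2k}\bigl(f(z) - 1\bigr), \qquad k = 0, 1, 2, \ldots,
\]
each satisfying \eqref{eq:1.1}--\eqref{eq:1.2} with source $\mu^{-2k}h$ and with $g_k \leq 1$ wherever $f \leq 1$. Since $\{g_k \leq 0\} = \{f \leq 1 - \mu^{2k}\} \supseteq \{f \leq 0\}$, the lower hypothesis $\lvert\{g_k\leq 0\}\cap Q^-_{r_0}\rvert \geq \delta\lvert Q^-_{r_0}\rvert$ of Theorem \ref{thm:IVL} holds for every $k$ with $\delta_1 = \delta$. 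Moreover, in the flat region of the barriers $\varphi_0, \varphi_2$, the sets $\{0 < g_k < 1 - \mu^2\}$ correspond to the value intervals $\{1 - \mu^{2k} < f < 1 - \mu^{2(k+1)}\}$, which telescope and are pairwise disjoint across $k$.

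Set $\delta_1 = \delta$ and choose $\delta_2 \in (0, 1)$ to be a small universal constant (depending only on the smallness threshold $\varepsilon_0$ of Lemma \ref{lem:6.6}), so that Theorem \ref{thm:IVL} provides $\mu \sim \delta^{6d+16}$ and $\nu \sim \delta^{18d+46}$. Fix $K := \lfloor C/\nu\rfloor$ for a suitable universal $C$ and take $\theta := \mu^{2(K+1)}/2$, which enforces the source bound $\|\mu^{-2k}h\|_{L^\infty} \leq C\mu^2$ required by Theorem \ref{thm:IVL} for every $k \leq K$. Now I run the dichotomy: \emph{either} there exists $k_\star \in \{0, \ldots, K\}$ with
\[
\lvert\{g_{k_\star} > 1 - \mu^2\}\cap Q_{r_0}\rvert < \delta_2\lvert Q_{r_0}\rvert,
\]
\emph{or} this fails for every such $k$.

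In the first case, $g_{k_\star+1}$ is a sub-solution with $g_{k_\star+1} \leq 1$ and $\int (g_{k_\star+1})_+^2\,\dd z \leq \delta_2\lvert Q_{r_0}\rvert$; choosing $\delta_2$ below the smallness threshold of Lemma \ref{lem:6.6} yields $g_{k_\star+1} \leq 1/2$ in $Q_{r_0/2}$, which translates to $f \leq 1 - \mu^{2(k_\star+1)}/2 \leq 1 - \theta$ in $Q_{r_0/2}$. In the second case, Theorem \ref{thm:IVL} applies to $g_k$ for every $k = 0, \ldots, K$ and produces intermediate sets $E_k := \{\varphi_0 < g_k < \varphi_2\}$ of measure $\geq \nu\lvert Q_{1/2}\rvert$. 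By the disjointness of the corresponding value intervals (exact in the central barrier region where $\psi \equiv 0$ and $F_0 = F_2 = -1$), the $E_k$ are pairwise disjoint up to universal constants, so their total measure $(K+1)\nu\lvert Q_{1/2}\rvert$ must fit inside a common ambient cylinder of measure $\lesssim \lvert Q_{1/2}\rvert$. This contradicts $K \geq C/\nu$, so the first case must occur.

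The main obstacle is the scale bookkeeping across the iteration: enforcing the source-term condition for Theorem \ref{thm:IVL} at all levels $k \leq K$ forces $\theta \lesssim \mu^{2(K+1)}$, which after substituting $K \sim 1/\nu$, $\mu \sim \delta^{6d+16}$ and $\nu \sim \delta^{18d+46}$ yields $\theta$ of the order claimed in the statement. A secondary technical point is reconciling the hypothesis $f \leq 1$ on $B_{2^{-(1+2s)}}\times \R^d$ with the IVL requirement $f \leq 1$ on $B_1\times \R^d$; this is handled by a preliminary dyadic rescaling $r = 1/2$, which only adjusts universal constants. Finally, the disjointness of the $E_k$ across levels is only exact in the central velocity region where the barriers are flat, but the mass of each $E_k$ concentrates there by construction of the barriers, so the counting argument goes through with universal constants.
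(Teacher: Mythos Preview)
Your proposal is correct and follows essentially the same route as the paper: your $g_k$ coincide exactly with the paper's $f_k := \mu^{-2k}[f - (1 - \mu^{2k})]$, the dichotomy (either the First De Giorgi smallness threshold is met at some level, or the Intermediate Value Lemma fires at every level and the disjoint intermediate sets overfill the ambient cylinder) is the same, and the resulting choice $\theta \sim \mu^{2(1+1/\nu)}/2$ matches. The paper phrases the dichotomy via $\int_{Q_{r_0}}(f_k)_+^2 \leq \delta_0|Q_{r_0}|$ rather than your measure condition $|\{g_{k_\star} > 1-\mu^2\}\cap Q_{r_0}| < \delta_2|Q_{r_0}|$, but these are equivalent since $0 \leq (f_k)_+ \leq 1$; and the paper asserts the disjointness $\{\varphi_0 < f_i < \varphi_2\}\cap\{\varphi_0 < f_j < \varphi_2\} = \emptyset$ outright without the caveat you raise about the non-flat barrier region.
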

\begin{proof}
By Lemma \ref{lem:6.6} we know that there is a $\delta_0 > 0$ such that for any $r > 0$ any sub-solution $f$ on $Q_{2r}$ satisfying $\int_{Q_r}f^2_+ \leq \delta_0\abs{Q_r}$ there holds $f \leq \frac{1}{2}$ in $Q_{\frac{r}{2}}$. Therefore, we define $\nu, \mu > 0$ as in the proof of Theorem \ref{thm:IVL} with $\delta_1 = \delta$ and $\delta_2 = \delta_0$. 

We consider $f_k := \mu^{-2k}\big[{f- (1 - \mu^{2k})}\big]$ for $k \geq 0$. Note that $f_k$ is a sub-solution to \eqref{eq:1.1}-\eqref{eq:1.2} for each $k\geq 0$ with \eqref{eq:upperbound-2}-\eqref{eq:cancellation2} and with a source term of $L^\infty$ norm less than $\theta$ as long as $k \leq 1 + \frac{1}{\nu}$ since by assumption $\norm{h}_{L^\infty} \leq\theta$ so that $\norm{h}_{L^\infty} \leq \mu^{2(1+\frac{1}{\nu})}$. Moreover, $\{\varphi_0 < f_i < \varphi_2\} \cap \{\varphi_0 < f_j < \varphi_2\} = \emptyset$ for all $i \neq j$ and each $f_k$ satisfies \eqref{eq:3.15}.

In case that $\int_{Q_{r_0}} (f_k)^2_+ \leq \delta_0 \abs{Q_{r_0}}$ there holds $f_k \leq \frac{1}{2}$ in $Q_{\frac{r_0}{2}}$, and we conclude $f \leq 1 - \theta$ with $\theta = \frac{\mu^{2k}}{2}$. Thus it suffices to consider $1 \leq k_0\leq 1 + \frac{1}{\nu}$ such that $\int_{Q_{r_0}} (f_k)^2_+ > \delta_0 \abs{Q_{r_0}}$ for any $0 \leq k \leq k_0$. Then for $0 \leq k \leq k_0 - 1$ there holds
\bals
	&\abs{\{f_k \geq 1 - \mu^2\}\cap Q_{r_0}} = \abs{\{f_{k+1} \geq 0\}\cap Q_{r_0}} \geq \int_{Q_{r_0}}(f_{k+1})^2_+ > \delta_0\abs{Q_{r_0}}, \\
	&\abs{\{f_k \leq 0\}\cap Q^-_{r_0}} \geq \abs{\{f \leq 0\}\cap Q^-_{r_0}} \geq \delta \abs{Q^-_{r_0}},
\eals
where we used the fact that $f \leq 0$ implies $f_k \leq 0$ for all $k \geq 0$. Thus we can apply Theorem \ref{thm:IVL} for our choice of $\delta_1, \delta_2$ so that we get
\beqs
	\abs{\{\varphi_0 < f_k < \varphi_2\}\cap (-3, 0]\times Q^t_{\frac{1}{2}}} \geq \nu\abs{(-3, 0] \times Q^t_{\frac{1}{2}}}.
\eeqs
Since these sets are all disjoint, we get by summing these estimates
\beqs
	\abs{(-3, 0]\times Q^t_{\frac{1}{2}}}\geq \sum^{k_0-1}_{k=0}\abs{\{\varphi_0 < f_k < \varphi_2\}\cap (-3, 0]\times Q^t_{\frac{1}{2}}} \geq k_0\nu\abs{(-3, 0]\times Q^t_{\frac{1}{2}}}.
\eeqs
Thus $k_0 \leq \frac{1}{\nu}$ and we deduce with $f_{k_0 + 1} \leq \frac{1}{2}$ in $Q_{\frac{r_0}{2}}$
\beqs
	f \leq 1 - \frac{\mu^{2k_0 + 2}}{2} \leq 1 - \frac{\mu^{\frac{2 + 2\nu}{\nu}}}{2} \quad \textrm{ in } Q_{\frac{r_0}{2}}.
\eeqs
This yields the claim for $\theta(\delta) := \frac{\mu^{\frac{2 + 2\nu}{\nu}}}{2} \sim \delta^{2(1 + \delta^{-(18d+46)})}$.
\end{proof}

\section{Harnack Inequalities and Hölder Continuity}
\label{sec:6}
\subsection{Harnack Inequalities}
\label{subsec:harnack}
We follow Section 4.1 in \cite{JGCM}. We consider a non-negative super-solution $f$ to \eqref{eq:1.1}-\eqref{eq:1.2} for $h = 0$ on $(-3, 0] \times Q^t_1$ so that \eqref{eq:coercivity}-\eqref{eq:cancellation2} holds for $\bar R = 2$. Then Lemma \ref{lem:16} applied to the sub-solution $g := 1 - \frac{f}{M}$ implies for any $\delta \in (0, 1)$ and $M \sim \delta^{-2(1+\delta^{-(18d+46)})}$ that $\forall Q_r(z) \subset (-3, 0] \times Q^t_1 \textrm{ so that } Q^+_{\frac{r}{2}}(z) \subset (-3, 0] \times Q^t_1$
\beq
	 \frac{\abs{\{f > M\}\cap Q_r(z)}}{\abs{Q_r(z)}} > \delta \implies \inf_{Q^+_{\frac{r}{2}}(z)} f \geq 1,
\label{eq:4.1}	
\eeq
where we recall $Q^+_r(z) = Q_r(z + (2 r^{2s}, 2 r^{2s}v, 0))$ for $z = (t, x, v)$. This implies using the layer-cake representation that if $\inf_{Q_{\frac{r}{2}}} f < 1$, then
\beq
	\frac{\abs{\{f > M\}\cap Q^-_r}}{\abs{Q^-_r}} \lesssim \delta(M) = \Bigg(\frac{1}{\mathrm{ln}(1 + M)}\Bigg)^{\frac{1}{(18d+47)}} \implies \int_{Q^-_{r}} \big(\mathrm{ln}(1 + f)\big)^{\frac{1}{(18d+48)}} \lesssim 1.
\label{eq:4.2}
\eeq
We can improve this logarithmic integrability as follows. We pick $r_0 < \frac{1}{3}$ and consider the sequence of cylinders
\beqs
	\mathcal Q^k := Q_{\frac{r_0}{2}+\alpha_k}\Bigg(-\frac{5}{2}r_0^{2s} + \frac{1}{2}\big(\frac{r_0}{2}+\alpha_k\big)^{2s}, 0, 0\Bigg) \quad \textrm{where} \quad \alpha_k := \frac{r_0}{2}7^{1-k}.
\eeqs
Then $\mathcal Q^1 = Q_{r_0}^-$ and $\mathcal Q^k \to \tilde Q_{\frac{r_0}{2}}^- := Q_{\frac{r_0}{2}}\big((-\frac{5}{2} +\frac{1}{2\cdot 2^{2s}})r_0^{2s}, 0, 0)\big)$ as $k\to\infty$. Note that these cylinders satisfy $\tilde Q_{\frac{r_0}{2}}^-\subset \mathcal Q^k \subset \bar{\mathcal Q}^k \subset \mathring{\mathcal Q}^{k-1} \subset Q^-_{r_0}$ for all $k \geq 1$. We claim as in \cite{JGCM} that for $\delta_0 > 0$ to be determined, for any non-negative super-solution $f$ with $\inf_{Q_{\frac{r}{2}}} f < 1$, there holds
\beq
	\forall k \geq 1, \quad \frac{\abs{\{f \geq M^k\}\cap \mathcal Q^k}}{\abs{\mathcal Q^k}} \leq \delta_0\Big(\frac{1}{35m}\Big)^{(2d + 2s(d+1))k},
\label{eq:4.3}
\eeq
where $M \sim \delta^{-2(1 + \delta^{-(18d+46)})}$ with $\delta := \delta_0\Big(\frac{1}{35m}\Big)^{2d + 2s(d+1)}$ and $m\geq 3$. If we can show that \eqref{eq:4.3} holds, then we deduce with the layer-cake representation that there exists $\zeta \gtrsim \delta_0^{(18d+47)} > 0$ such that $\int_{\tilde Q^-_{\frac{r_0}{2}}} f^\zeta\dd z \lesssim 1$, which in turn yields by linearity
\beq
	\Bigg(\int_{\tilde Q^-_{\frac{r_0}{2}}} f^\zeta(z)\dd z\Bigg)^{\frac{1}{\zeta}} \lesssim \inf_{Q_{\frac{r_0}{2}}} f. 
\label{eq:weakH_aux}
\eeq
We can now derive a non-linear Harnack inequality using De Giorgi's first Lemma \ref{lem:6.6}.  Indeed, assume $f$ is a solution of \eqref{eq:1.1} such that $0 \leq f \leq 1$ a.e. and write 
\beqs
	\inf_{Q_{\frac{r_0}{2}}} f =: \varepsilon_1 \leq 1.
\eeqs
Then due to \eqref{eq:weakH_aux} we have
\beqs
	\Bigg(\int_{\tilde Q^-_{\frac{r_0}{2}}} f^2(z)\dd z\Bigg)^{\frac{1}{\zeta}} = \Bigg(\int_{\tilde Q^-_{\frac{r_0}{2}}} f^\zeta(z) f^{2-\zeta}(z)\dd z\Bigg)^{\frac{1}{\zeta}} \leq \Bigg(\int_{\tilde Q^-_{\frac{r_0}{2}}} f^\zeta(z)\dd z\Bigg)^{\frac{1}{\zeta}} \lesssim \inf_{Q_{\frac{r_0}{2}}} f = \varepsilon_1.
\eeqs
Thus by Lemma \ref{lem:6.6} with $\varepsilon_0 = \varepsilon_1^\zeta$ 
\beq\label{eq:strongH_aux}
	\sup_{\tilde Q^-_{\frac{r_0}{4}}} f \leq C \varepsilon_1^{\zeta\beta_2} = C\Big(\inf_{Q_{\frac{r_0}{2}}} f\Big)^{\zeta\beta_2},
\eeq
where $\beta_2 = \frac{p-2}{2(2p-2)} \in (0, 1)$ and $\tilde Q^-_{\frac{r_0}{4}} = Q_{\frac{r_0}{4}}\big((-\frac{5}{2} +\frac{1}{2\cdot 2^{2s}})r_0^{2s}, 0, 0)\big)$.

This implies the Weak \eqref{eq:weakH} and Not-so-Strong \eqref{eq:strongH} Harnack inequality (with $\beta = \zeta \beta_2$) for any non-negative $\tilde f$ by applying the previous estimate to $f = \tilde f +(1+t)\norm{h}_{L^\infty}$.

We now prove \eqref{eq:4.3} inductively. The case $k = 1$ holds by \eqref{eq:4.2}. We define
\beqs
	A_{k+1} := \{f > M^{k+1}\}\cap\mathcal Q^{k+1}
\eeqs
and denote 
\beqs
	\mathfrak C_r[z] := z \circ Q_{2r}\Bigg(\Big(\frac{1}{2}(2r)^{2s}, 0, 0\Big)\Bigg) = z \circ \Bigg(-\frac{1}{2}(2r)^{2s}, \frac{1}{2}(2r)^{2s}\Bigg] \times B_{(2r)^{1+2s}} \times B_{2r}. 
\eeqs
Just as in \cite{JGCM} we construct $z_l = (t_l, x_l, v_l) \in \mathcal Q^{k+1}$ and $r_l > 0$, $l\geq 1$, $m \geq 3$, $n \geq 1$ so that
\bals
	&\forall l \geq 1, r_l \in \Big(0, \frac{\alpha_{k+1}}{5 m\cdot n}\Big), \\
	&\forall l \geq 1, \abs{A_{k+1}\cap\mathfrak C_{5 m \cdot r_l}[z_l]}\leq \delta_0\abs{\mathfrak C_{5 m \cdot r_l}[z_l]},\\
	&\forall l \geq 1, \abs{A_{k+1}\cap\mathfrak C_{r_l}[z_l]} > \delta_0\abs{\mathfrak C_{r_l}[z_l]},\\
	&\mathfrak C_{m\cdot r_l}, l \geq 1, \textrm{are disjoint cylinders}, \\
	&A_{k+1} \textrm{ is covered by the family } \big(\mathfrak C_{5m\cdot r_l}[z_l]\big)_{l\geq 1}.
\eals
For these cylinders, we have $$\mathfrak C^+_{r_l}[z_l] := z_l \circ Q_{2r_l}^+\Big((\frac{1}{2}(2r_l)^{2s}, 0, 0)\Big) = z_l \circ \Big(\frac{3}{2}(2r_l)^{2s}, \frac{5}{2}(2r_l)^{2s}\Big] \times B_{(2r_l)^{1 + 2s}} \times B_{2r_l}.$$ We remark that for $m \geq 5^{\frac{1}{2s}}$, there holds $\mathfrak C_{r_l}[z_l]^+ \subset \mathfrak C_{m \cdot r_l}[z_l]$. 
Moreover, we pick $n \geq 1$ so that $\forall k \geq 1$
\beqs
	\big(7^{k} + 7\big)^{2s}-\big(7^k+1\big)^{2s} \geq \Big(\frac{2}{n}\Big)^{2s}.
\eeqs
Then by choice of $z_l$ and $r_l$ we have that $\mathfrak C_{5m\cdot r_l}[z_l] \subset \mathcal Q^k$. Note that for example, if $s \geq \frac{1}{2}$ we can choose $n =1$. 

As in \cite{JGCM}, we prove that the family of cylinders $\mathfrak C_r[z]$ with $z \in \mathcal Q^{k+1}$, $r \in \big(0, \frac{\alpha_{k+1}}{5 m\cdot n}\big)$, so that $\abs{A_{k+1}\cap\mathfrak C_{5m\cdot r}[z]}\leq \delta_0\abs{\mathfrak C_{5m\cdot r}[z]}$ and $\abs{A_{k+1}\cap\mathfrak C_{r}[z]} > \delta_0\abs{\mathfrak C_{r}[z]}$ cover $A_{k+1}$. 
By induction hypothesis there holds for all $r \in \big(\frac{\alpha_{k+1}}{5m\cdot n}, \alpha_{k+1}\big)$
\beq
	\abs{A_{k+1}\cap \mathfrak C_r[z]}\leq \abs{A_{k}\cap \mathfrak C_r[z]} \leq \abs{A_{k}\cap \mathcal Q^k}\leq \delta_0\Big(\frac{1}{35m}\Big)^{(2d + 2s(d+1))k}\abs{\mathcal Q^k} \leq \delta_0\abs{\mathcal C_r[z]}.
\label{eq:4.4}
\eeq
If $z \in A_{k+1}$ is not covered by $\mathcal F$ then the continuous positive function $\varphi(r) = \frac{\abs{A_{k+1}\cap \mathfrak C_r[z]}}{\abs{\mathfrak C_r[z]}}$ on $(0, +\infty)$ satisfies $\varphi(r) \leq \delta_0$ or $\varphi(5m\cdot n\cdot r) > \delta_0$ for all $r \in \big(0, \frac{\alpha_{k+1}}{5m\cdot n}\big)$. By \eqref{eq:4.4} and by continuity, we must have $\varphi(r) \leq \delta_0$ for all $r \in \big(0, \frac{\alpha_{k+1}}{5m\cdot n}\big)$. Lebesgue's differentiation theorem implies in the limit $r\to 0$ there holds $z \notin A_{k+1}$. Thus $\mathcal F$ covers $A_{k+1}$. 

In particular, $A_{k+1}$ is covered by the family $\mathcal F'$ of cylinders $\mathfrak C_{m\cdot r}[z]$ with $z \in \mathcal Q^{k+1}$, $r \in \big(0, \frac{\alpha_{k+1}}{5m\cdot n}\big)$, so that $\abs{A_{k+1}\cap\mathfrak C_{5m\cdot r}[z]}\leq \delta_0\abs{\mathfrak C_{5m\cdot r}[z]}$ and $\abs{A_{k+1}\cap\mathfrak C_{r}[z]} > \delta_0\abs{\mathfrak C_{r}[z]}$. Vitali's covering lemma gives us the existence of a countable sub-family, denoted $\big(\mathfrak C_{r_l}[z_l]\big)_{l\geq 1}$, such that $\big(\mathfrak C_{5m\cdot r_l}[z_l]\big)_{l\geq 1}$ covers $A_{k+1}$ and the $(\mathfrak C_{m\cdot r_l}[z_l])_{l\geq 1}$ are disjoint. 

This finishes the construction of the covering with the desired properties. We can then apply Lemma \ref{lem:16} to each $\mathfrak C_{r_l}[z_l]$ to obtain $\mathfrak C_{r_l}[z_l]^+\subset A_k$ and the $(\mathfrak C_{r_l}[z_l]^+)_{l\geq 1}$ are disjoint since $\mathfrak C_{r_l}[z_l]^+ \subset \mathfrak C_{m\cdot r_l}[z_l]$. This yields for the right choice of $\delta_0$, i.e. $\delta_0 \leq \Big(\frac{4}{1225m^2}\Big)^{2d + 2s(d+1)}$,
\bals
	\abs{A_{k+1}} &\leq \sum_{l\geq 1} \abs{A_{k+1} \cap \mathfrak C_{5m\cdot r_l}[z_l]}\leq\delta_0\sum_{l\geq 1}\abs{C_{5m\cdot r_l}[z_l]}\\
	&\leq (5m)^{2d(s+1)+2s}\delta_0\sum_{l\geq 1}\abs{\mathfrak C_{r_l}[z_l]} \leq (5m)^{2d(s+1)+2s} \delta_0\sum_{l\geq 1}\abs{\mathfrak C_{r_l}[z_l]^+} \\
	&\leq (5m)^{2d(s+1)+2s}\delta_0\abs{A_k}\leq (5m)^{2d(s+1)+2s}\delta_0^2\Big(\frac{1}{35m}\Big)^{(2d + 2s(d+1))k}\abs{\mathcal Q^k}\\
	&\leq \delta_0\Big(\frac{1}{35m}\Big)^{(2d + 2s(d+1))(k+1)}\abs{\mathcal Q^{k+1}}.
\eals
This proves \eqref{eq:4.3} and thus concludes the proof.

\subsection{Hölder Continuity}
\label{subsec:hölder}
Obtaining \eqref{eq:holder} is a standard argument, see \cite{Vas} and \cite{JGCM}. Let $f$ be a weak solution to \eqref{eq:1.1}-\eqref{eq:1.2}. For given $r_0 < \frac{1}{3}$ small enough, Lemma \ref{lem:16} implies
\beq
	\underset{Q_{r_0}}{\mathrm{osc }} f \leq \left(1 -\frac{\theta}{2}\right) \max \Bigg\{\underset{Q_1}{\mathrm{osc}} f, e^{2(3+2^{18d + 46})}\norm{h}_{L^\infty(Q_2)}\Bigg\}.
\label{eq:4.5}
\eeq
To see this we apply Lemma \ref{lem:16} rescaled to $Q_2$ to either $F$ or $-F$ depending on which of these satisfy \eqref{eq:3.15}, where $F$ is defined by
\beqs
	F := \frac{2f - (\sup_{Q_1} f + \inf_{Q_1} f)}{\max \Big\{\underset{Q_1}{\mathrm{osc}} f, e^{2(3+2^{18d + 46})}\norm{h}_{L^\infty(Q_2)}\Big\}}.
\eeqs
We want to prove $\forall z_0 \in Q_1, \forall r \in (0, r_0)$,
\beq
	\underset{Q_r(z_0)}{\textrm{osc} }f \leq r^\alpha e^{2(3+2^{18d + 46})}\big(1 +\norm{h}_{L^\infty(Q_2)}\big)\max\Big\{\underset{Q_1}{\mathrm{osc}} f, e^{2(3+2^{18d+46})}\norm{h}_{L^\infty(Q_2)}\Big\},
\label{eq:4.6}
\eeq
where we chose $\alpha \in (0, 1)$ so that $1 - \frac{\theta}{2} = r_0^\alpha$. From there we can deduce Hölder regularity by choosing $z, z' \in Q_1$ so that $\abs{z - z'} \leq r_0$.

In order to prove \eqref{eq:4.6} we proceed iteratively. We define for $n \in \N\setminus\{0\}$ a sequence of solutions to \eqref{eq:1.1}-\eqref{eq:1.2} in $Q_1$ by
\beqs
	f_n(s, y, w) := \frac{2\big(1 - \frac{\theta}{2}\big)^{1-n} f\big(t_0+r_0^{2sn}s, x_0 - r_0^{2sn}sv_0 + r_0^{(1 +2s)n}y, v_0 + r_0^nw\big)}{ \max \Big\{\underset{Q_1}{\mathrm{osc}} f, e^{2(3+2^{18d+46})}\norm{h}_{L^\infty(Q_2)}\Big\}}.
\eeqs
We prove by induction on $n \geq 1$ that $\textrm{osc}_{Q_1} f_n \leq 2e^{2(3+2^{18d+46})}\big(1 +\norm{h}_{L^\infty(Q_2)}\big)$. The case $n = 1$ is clear by definition of $f_1$. Using \eqref{eq:4.5} and the induction hypothesis, we get
\bals
	\underset{Q_1}{\textrm{osc}} f_n &= \left(1 - \frac{\theta}{2}\right)^{-1}\underset{Q_{r_0}}{\textrm{osc}} f_{n-1} \\
	&\leq\max\left\{\underset{Q_1}{\mathrm{osc}} f_{n-1}, e^{2(3+2^{18d+46})}\norm{h}_{L^\infty(Q_2)}\right\} \\
	&\leq 2e^{2(3+2^{18d+46})}\big(1 +\norm{h}_{L^\infty(Q_2)}\big).
\eals
This implies \eqref{eq:4.6} if we choose $\alpha \in (0, 1)$ so that $1 - \frac{\theta}{2} = r_0^\alpha$. Indeed for $z_0 \in Q_1$ and $r \in (0, r_0)$ we write $r = r_0^n$ for some $n \in \N\setminus\{0\}$. Then
\bals
	 \underset{Q_r(z_0)}{\mathrm{osc }} f &= \frac{1}{2} r_0^{\alpha(n-1)}\max\left\{\underset{Q_1}{\mathrm{osc}} f, e^{2(3+2^{18d+46})}\norm{h}_{L^\infty(Q_2)}\right\}\underset{Q_r(z_0)}{\textrm{osc }} f_n \\
	 &= \frac{1}{2} r_0^{\alpha(n-1)}r^\alpha\max\left\{\underset{Q_1}{\mathrm{osc}} f, e^{2(3+2^{18d+46})}\norm{h}_{L^\infty(Q_2)}\right\}\underset{Q_1}{\textrm{osc }} f_{n+1} \\
	 &\leq r_0^{\alpha n}e^{2(3+2^{18d+46})}\left(1 +\norm{h}_{L^\infty(Q_2)}\right)\max\left\{\underset{Q_1}{\mathrm{osc}} f, e^{2(3+2^{18d+46})}\norm{h}_{L^\infty(Q_2)}\right\} \\
	 &= r^{\alpha}e^{2(3+2^{18d+46})}\left(1 +\norm{h}_{L^\infty(Q_2)}\right)\max\left\{\underset{Q_1}{\mathrm{osc}} f, e^{2(3+2^{18d+46})}\norm{h}_{L^\infty(Q_2)}\right\}.
\eals

\appendix
\section{Coercivity estimate for the non-cutoff Boltzmann Kernel}\label{appendix}
We consider a non-negative function $f$ with mass bounded from above and below, energy and entropy bounded from above. We denote with $K_f$ the non-negative kernel for the non-cutoff Boltzmann equation. It is determined by \eqref{eq:boltzmann_kernel} and can be expressed as 
\beqs
	K_f(v, v') \approx \Bigg(\int_{w \cdot (v-v') = 0} f(v+w) \abs{w}^{\gamma+1+2s} \dd w\Bigg)\abs{v-v'}^{-(d+2s)},
\eeqs 
with non-locality parameter $s \in (0, 1)$ and $\gamma \in (-d, 1]$, see \cite{IS59}. In \cite{IS59}, Silvestre determines a cone of directions with vertex $v$ in which the Boltzmann kernel is bounded below, see also \cite[Appendix]{IS}. We denote with $A = A(v) \in \partial B_1$ the set of directions where $K_f(v, v') \geq \lambda (1+\abs{v}^{1+2s+\gamma})\abs{v-v'}^{-(d+2s)}$ for all $v'$ such that $\frac{(v-v')}{\abs{v-v'}} \in A$. The existence of $A$ and further properties are stated in \cite[Lemma A.3]{IS}. Consistent with the notation of \cite{IS} we denote with $\Xi(v)$ the symmetric cone of values of $v'$ such that $\frac{(v-v')}{\abs{v-v'}} \in A$. Then for any $v' \in \Xi(v)$ there holds $K_f(v, v') \geq \lambda (1+\abs{v}^{1+2s+\gamma})\abs{v-v'}^{-(d+2s)}$. In particular, \cite[Lemma A.3]{IS} states a universal lower bound on the density of $\Xi(v)$. This can be used to prove a coercivity estimate \cite[Lemma A.6]{IS}
\beqs
	\int\int\frac{\abs{g(v) - g(v')}^2}{\abs{v-v'}^{d+2s}}\dd v' \dd v \leq C\int \int (g(v) - g(v'))^2K(v, v') \dd v'\dd v
\eeqs
for any kernel with a set of directions where it is pointwisely lower bounded and for any $g$.
We claim that in fact we can use the exact same tools to show a similar coercivity estimate on the square root of the kernel:
\begin{lemma}
Let $K : \R^d \times \R^d \to \R^d$ be any non-negative kernel such that there is a set of directions $A(v)$ where $K(v, v') \geq \lambda (1+\abs{v}^{1+2s+\gamma})\abs{v-v'}^{-(d+2s)}$ for all $v'$ such that  $\frac{(v-v')}{\abs{v-v'}} \in A$. Assume that the $(d-1)$-dimensional measure of $A = A(v)$ is at least $\mu(v) := c(1+\abs{v})^{-1}$. Let $R> 0$ and $\mu =\inf{\{\mu(v) : v \in B_R\}}$. Then there is a constant $C = C(\lambda, s, d, \mu)$ such that for any function $g$ supported in $B_R$ we have
\beqs
	\int_{B_R} \int_{B_R} \frac{\abs{g(v) - g(v')}}{\abs{v-v'}^{d/2+s}}\dd v' \dd v \leq C\int_{B_{2R}} \int_{B_{2R}} \abs{g(v) - g(v')}K^{\frac{1}{2}}(v, v') \dd v'\dd v.
\eeqs
\end{lemma}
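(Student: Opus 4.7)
I would follow the strategy of \cite[Lemma A.6]{IS}, adapting it from squares to absolute values by working with $K^{1/2}$ in place of $K$. The core idea is to bound $|g(v)-g(v')|$ by averaging a triangle inequality through a suitably chosen set of intermediate points $u$ along which a pointwise lower bound on the kernel is available.

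Concretely, for each pair $(v, v') \in B_R \times B_R$ with $r := |v-v'|$, I would define
\beqs
S(v, v') := \Big\{u \in \R^d : \tfrac{r}{4} \le |v-u|, |v'-u| \le r, ~ \tfrac{v-u}{|v-u|} \in A(v), ~ \tfrac{v'-u}{|v'-u|} \in A(v')\Big\},
\eeqs
which lies inside $B_{2R}$. Using the density lower bound $\mathcal{H}^{d-1}(A(v)), \mathcal{H}^{d-1}(A(v')) \geq \mu$ together with the geometric construction in \cite[Lemma A.3]{IS}, one obtains $|S(v,v')| \gtrsim \mu^2 r^d$. For $u \in S(v,v')$, the hypothesis on $K$ combined with $|v-u|, |v'-u| \sim r$ yields $K^{1/2}(v,u), K^{1/2}(v',u) \gtrsim r^{-(d/2+s)}$.

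The triangle inequality $|g(v)-g(v')| \le |g(v)-g(u)| + |g(v')-g(u)|$, averaged over $u \in S(v, v')$, then gives
\beqs
\frac{|g(v)-g(v')|}{|v-v'|^{d/2+s}} \lesssim \frac{1}{r^d}\int_{S(v,v')}\Big[|g(v)-g(u)| K^{1/2}(v,u) + |g(v')-g(u)| K^{1/2}(v',u)\Big]du.
\eeqs
I would then integrate over $(v, v') \in B_R \times B_R$ and apply Fubini: for fixed $v \in B_R$ and $u \in B_{2R}$, the set $\{v' \in B_R : u \in S(v, v')\}$ is contained in a ball of radius $\sim |v-u|$ around $v$ on which $r \sim |v-u|$, so $\int_{\{v'\}} r^{-d}\, dv' \lesssim 1$. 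The second term is handled symmetrically. Both contributions are dominated by $\int_{B_{2R}} \int_{B_{2R}} |g(a)-g(b)| K^{1/2}(a, b)\, da\, db$, which is the claimed bound.

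The main obstacle is the geometric construction: ensuring $|S(v,v')| \gtrsim r^d$ rests on the directional lower bound from \cite[Lemma A.3]{IS}, which is specific to the non-cutoff Boltzmann kernel and requires careful tracking of how the two cones $A(v)$ and $A(v')$ intersect. Once this is in hand, the remaining steps amount to a change-of-variables argument parallel to \cite[Lemma A.6]{IS} and use only tools already at our disposal.
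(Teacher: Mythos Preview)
Your proposal is correct and follows essentially the same approach as the paper: both use the geometric intersection lemma from \cite{IS} (Lemmas A.4--A.5, not A.3) to find a set of intermediate points $u$ of measure $\gtrsim r^d$ lying in both cones, average the triangle inequality $|g(v)-g(v')|\le |g(v)-g(u)|+|g(v')-g(u)|$ over this set, and then apply Fubini. The only cosmetic differences are that the paper packages the averaging through an auxiliary quantity $N_r(v)$ and finishes via polar coordinates rather than your direct Fubini count, and that the annulus constants $r/4,\,r$ in your definition of $S(v,v')$ should really be the $c_0,\,C_0$ coming from the intersection lemma.
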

The lemma follows similarly to \cite[Lemma A.6]{IS}.
\begin{proof}
If we denote with $\Xi(v)$ the symmetric cone of values such that $\frac{(v-v')}{\abs{v-v'}} \in A$, then for any $v' \in \Xi(v)$ we have the lower bound
\beqs
	K^{\frac{1}{2}}(v, v') \geq \lambda^{\frac{1}{2}} (1+\abs{v}^{1+2s+\gamma})^{\frac{1}{2}}\abs{v-v'}^{-\frac{d+2s}{2}}
\eeqs
by positivity of $K$. Moreover, the assumptions on $A$ imply that for all $v_1, v_2 \in B_{2R}$ there is a constant $C_0$ so that 
\beqs
	\Abs{\Xi(v_1) \cap \Xi(v_2) \cap B_{C_0\abs{v_1-v_2}}(v_2)} \geq c\abs{v_1 - v_2}^d,
\eeqs
see \cite[Lemma A.4, A.5]{IS}. We can choose $c_0 >0$ small enough such that 
\beqs
	\Abs{\Xi(v_1) \cap \Xi(v_2) \cap B_{C_0\abs{v_1-v_2}}(v_2)\setminus B_{c_0\abs{v_1-v_2}}(v_1)\setminus B_{c_0\abs{v_1-v_2}}(v_2)} \geq c\abs{v_1 - v_2}^d.
\eeqs
For this choice of $C_0, c_0$ we define
\beqs
	N_r(v) := \int_{B_{C_0r}(v)\setminus B_{c_0r}(v)} \abs{g(v) - g(w)}K^{\frac{1}{2}}(v, w) \dd w.
\eeqs
Then for any $v_1, v_2 \in B_{2R}$ using the triangle inequality we get for $r := \abs{v_1 - v_2}$
\bals
	N_r(v_1) + N_r(v_2) &\geq c\Bigg(\int_{\Xi(v_1) \cap B_{C_0r}(v_1)\setminus B_{c_0r}(v_1)} \abs{g(v_1) - g(w)}\abs{v_1-w}^{-\frac{d+2s}{2}}\dd w \\
	&\qquad+ \int_{\Xi(v_2) \cap B_{C_0r}(v_2)\setminus B_{c_0r}(v_2)} \abs{g(v_2) - g(w)}\abs{v_2-w}^{-\frac{d+2s}{2}}\dd w\Bigg)\\
	&\geq c\int_{\Xi(v_1) \cap \Xi(v_2) \cap B_{C_0r}(v_2)\setminus B_{c_0r}(v_1)\setminus B_{c_0r}(v_2)} \abs{g(v_1) - g(v_2)}r^{-\frac{d+2s}{2}}\dd w\\
	&\geq c  \abs{g(v_1) - g(v_2)}r^{\frac{d-2s}{2}}.
\eals	
Thus by choice of $r$
\bals
	\int_{B_{2R}}\int_{B_{2R}} &\frac{\abs{g(v_1) - g(v_2)}}{\abs{v_1-v_2}^{s+\frac{d}{2}}}\dd v_1 \dd v_2 \\
	&\leq \int_{B_{2R}} \int_{B_{2R}} \big(N_r(v_1) + N_r(v_2)\big) \abs{v_1 - v_2}^{-d} \dd v_2 \dd v_1\\
	&=2 \int_{B_{2R}} \int_{B_{2R}} N_r(v_1)\abs{v_1 - v_2}^{-d} \dd v_2 \dd v_1\\
	&\leq C\int_{B_{2R}} \int_0^{2R} \int_{\mathbb S^{d-1}} N_r(v_1) r^{-1} \dd \sigma \dd r \dd v_1\\
	&\leq C\int_{B_{2R}} \int_{B_{2R}}  \abs{g(v) - g(w)}K^{\frac{1}{2}}(v, w) \int_{C_0^{-1}\abs{v_1-w}}^{c_0^{-1}\abs{v_1-w}} r^{-1} \dd r \dd w  \dd v_1\\
	&\leq C\int_{B_{2R}} \int_{B_{2R}}  \abs{g(v) - g(w)}K^{\frac{1}{2}}(v, w) \dd w  \dd v_1.
\eals
\end{proof}
\bigskip
\noindent {\bf Acknowledgements.} I thank Clément Mouhot for suggesting this problem and for his ingenious insights in our discussions. I also want to thank Jessica Guerand for pointing out the subtleties of the Intermediate Value Lemma
and Michael Struwe for his remarks. Finally, I wish to express my gratitude towards the associate editor, Luis Silvestre, and an anonymous referee for their useful comments.
This work was supported by the Cambridge International \& Newnham College Scholarship from the Cambridge Trust.

\bibliographystyle{plain}
\bibliography{NonLocalDeGiorgi-revised}

\end{document}